\title{A new construction of cyclic homology}
\author{Victor Ginzburg}
\address{V.G.:
Department of Mathematics, University of Chicago,  Chicago, IL 
60637, USA.}
\email{ginzburg@math.uchicago.edu}
\author{Travis Schedler}
\address{T.S.: Department of Mathematics, MIT, Cambridge, MA 02139, USA}
\email{trasched@gmail.com}
\numberwithin{equation}{subsection}
\theoremstyle{plain}
\newtheorem{theorem}[equation]{Theorem}
\newtheorem{lemma}[equation]{Lemma}
\newtheorem{proposition}[equation]{Proposition}
\newtheorem{corollary}[equation]{Corollary}
\theoremstyle{definition}
\newtheorem{example}[equation]{Example}
\theoremstyle{remark}
\newtheorem{remark}[equation]{Remark}
\newtheorem{claim}[equation]{Claim}
\newcommand{\act}{\operatorname{act}}
\newcommand{\Span}{\operatorname{Span}}
\newcommand{\Id}{\operatorname{Id}}
\newcommand{\ad}{\operatorname{ad}}
\newcommand{\im}{\operatorname{im}}
\newcommand{\bk}{\mathbf{k}}
\newcommand{\cl}{\operatorname{cl}}
\newcommand{\oA}{\overline{A}}
\newcommand{\oOm}{{\overline{\Omega}}}
\DeclareMathOperator{\sd}{{\mathsf{d}}}
\newcommand{\osd}{{\mathsf{\bar d}}}
\DeclareMathOperator{\GL}{\mathrm{GL}}
\DeclareMathOperator{\msb}{{\mathsf{b}}}
\DeclareMathOperator{\msB}{{\mathsf{B}}}
\DeclareMathOperator{\sN}{{\mathsf{N}}}
\DeclareMathOperator{\si}{{\mathsf{i}}}
\DeclareMathOperator{\DR}{\mathrm{DR}}
\DeclareMathOperator{\sign}{sign}
\DeclareMathOperator{\oDR}{\,\overline{\!DR\!}\,}
\DeclareMathOperator{\ohDRt}{\oDR_t^\wedge}
\newcommand{\ab}{\operatorname{\mathrm{c}}}
\DeclareMathOperator{\HD}{\mathrm{HD}}
\DeclareMathOperator{\EHD}{\mathrm{EHD}}
\DeclareMathOperator{\CC}{{CC}}
\DeclareMathOperator{\oCC}{{\overline{CC}}}
\newcommand{\HDe}{{{}^{\mathcal{D}\!}{H}}}
\newcommand{\oHDe}{{{}^{\mathcal{D}\!}{\,\overline{\!H\!}\,}}}
\DeclareMathOperator{\oHD}{{\,\overline{\!\HD\!}\,}}
\DeclareMathOperator{\HH}{{\mathrm{HH}}}
\DeclareMathOperator{\EHH}{{\mathrm{EHH}}}
\newcommand{\oHH}{\,\overline{\!\HH\!}\,}
\DeclareMathOperator{\HC}{{\mathrm{HC}}}
\DeclareMathOperator{\EHC}{{\mathrm{EHC}}}
\newcommand{\oHC}{\,\overline{\!\HC\!}\,}
\newcommand{\oEHC}{\,\overline{\!\EHC\!}\,}
\newcommand{\oEHH}{\,\overline{\!\EHH\!}\,}
\newcommand{\oEHD}{\,\overline{\!\EHD\!}\,}
\newcommand{\EE}{{\mathsf{E}}}
\newcommand{\bR}{\mathbb{R}}
\newcommand{\bQ}{\mathbb{Q}}
\newcommand{\bZ}{\mathbb{Z}}
\newcommand{\iso}{{\;\stackrel{_\sim}{\to}\;}}
\newcommand{\Rep}{\operatorname{Rep}}
\newcommand{\pr}{\operatorname{\mathsf w}}
\newcommand{\mfg}{\mathfrak{g}}
\newcommand{\Vect}{\operatorname{Vect}}
\newcommand{\Mat}{\operatorname{Mat}}
\newcommand{\gl}{\mathfrak{gl}}
\newcommand{\g}{\mathfrak{g}}
\newcommand{\Z}{{\mathbb Z}}
\newcommand{\gr}{\operatorname{\mathsf{gr}}}
\newcommand{\Spec}{\operatorname{\mathsf{Spec}}}
\newcommand{\onto}{\twoheadrightarrow}
\newcommand{\into}{\hookrightarrow}
\newcommand{\ev}{\operatorname{ev}}
\newcommand{\tr}{\operatorname{tr}}
\newcommand{\dis}{\displaystyle}
\newcommand{\inv}{^{-1}}
\newcommand{\sdi}{{\mathsf{\tilde d}^{-1}}}
\newcommand{\ooplus}{\ \oplus\  }
\def\ccirc{{{}_{\,{}^{^\circ}}}}
\newcommand{\mmod}{{\ \operatorname{\mathsf{mod}\,}}}
\renewcommand{\o}{\otimes }
\newcommand{\Om}{\Omega }
\newcommand{\per}{{\operatorname{per}}}
\newcommand{\hdot}{{\:\raisebox{2pt}{\text{\circle*{1.5}}}}}
\newcommand{\idot}{{\:\raisebox{2pt}{\text{\circle*{1.5}}}}}
\newcommand{\too}{\,\longrightarrow\,}
\newcommand{\mto}{\mapsto }
\newcommand{\bi}{{\mathbf{i}}}
\newcommand{\bbu}{(\!(u)\!)}
\newcommand{\dpu}{(\!(u)\!)}
\newcommand{\dspu}{[\![u]\!]}
\newcommand{\ta}{\tau }
\newcommand{\half}{\mbox{$\frac{1}{2}$}}
\newcommand{\sZ}{\mathsf{Z}}
\newcommand{\bplus}{\mbox{$\bigoplus$}}
\newcommand{\en}{\enspace }
\newcommand{\vi}{${\en\sf {(i)}}\;$}
\newcommand{\vii}{${\;\sf {(ii)}}\;$}
\newcommand{\viii}{${\sf {(iii)}}\;$}
\newcommand{\DDD}{{D_{_{\![\,]}}}}
\newcommand{\sset}{\subset}
\newcommand{\oom}{\oOm }
\newcommand{\kap}{\kappa }
\newcommand{\il}{\mbox{$\left[\frac{\ell}{2}\right]$}}
\newcommand{\abb}{{\rm{ab}}}
\newcommand{\cc}{_{\natural}}
\newcommand{\su}{[\![u]\!]}
\newcommand{\ddt}{\mbox{$\frac{d}{dt}$}}
\newcommand{\cll}{C_\ell }
\begin{document}
\begin{abstract}
  Based on the ideas of Cuntz and Quillen, we give a simple
  construction of cyclic homology of unital algebras in terms of the
  noncommutative de Rham complex and a certain differential similar to
  the equivariant de Rham differential. We describe the Connes exact
  sequence in this setting.

  We define {\em equivariant Deligne cohomology} and construct, for
  each $n\geq1,$ a natural map from cyclic homology of an algebra to
  the $\GL_n$-equivariant Deligne cohomology of the variety of
  $n$-dimensional representations of that algebra. The bridge between
  cyclic homology and equivariant Deligne cohomology is provided by
  \emph{extended cyclic homology}, which we define and compute here,
  based on the extended noncommutative de Rham complex introduced
  previously by the authors.
\end{abstract}
\maketitle
\tableofcontents

\section{Introduction}
\subsection{}
There are several definitions of cyclic homology. The original
definition, due to A. Connes \cite{Con-ndg} (see also Tsygan
\cite{Tsy-hmlarhh}), works for algebras over fields of characteristic
zero and is based on a cyclic version of the Hochschild complex.
Another definition that involves a double complex and that works in
arbitrary characteristic is due to Loday and Quillen \cite{LQ-chlahm}
(motivated by Tsygan \cite{Tsy-hmlarhh}).  This definition may be
reformulated in terms of a slightly different $(\msb,\msB)$-complex,
where $\msb$ and $\msB$ are the Hochschild and the Connes
differentials, respectively.  There is yet another definition of
cyclic homology as a certain nonabelian derived functor, due to Feigin
and Tsygan \cite{FT-aktcc}.  All the above definitions are known to be
equivalent (see, e.g., \cite{L}), and each one has certain advantages.

In this paper, we give one more definition of (reduced) cyclic
homology for unital algebras over fields of characteristic zero (more
generally, over commutative rings $\bk$ containing $\bQ$, when the
algebra is $\bk$-split). Our approach is partly motivated by a well
known analogy, that goes back to Rinehart \cite{Rin-dfgca}, between
the Connes differential $\msB$ and the familiar de Rham differential
on differential forms. In the case of commutative algebras, this
analogy can be made precise (see, e.g., \cite[\S3.4]{L}). In the
general case of not necessarily commutative algebras, Karoubi
\cite{Karhckt} introduced (motivated by an earlier construction of
Connes) a certain {\em noncommutative de Rham complex} that comes
equipped with a natural differential $\sd$, the Karoubi-de Rham
differential. The cohomology of the noncommutative de Rham complex is
not directly related to cyclic homology, however.  A precise relation
between the two is given by the so-called Connes-Karoubi theorem; see
\cite[Theorem 2.6.7]{L} or \eqref{e:connes-seq} below.

In our approach, cyclic homology is constructed directly in terms of
the noncommutative de Rham complex so that the Karoubi-de Rham
differential $\sd$ literally replaces the Connes differential
$\msB$. There is also another differential involved in the
construction, a counterpart of the Hochschild differential
$\msb$. This new differential $\si$, introduced first in \cite[\S 2.8,
\S 3.1]{CBEG}, anti-commutes with $\sd$ (while $\msb$ does not
anti-commute with $\sd$) and is, we believe, more closely related to
geometry than $\msb$.  Specifically, for every (not necessarily
commutative) algebra $A$, there is a canonical derivation $\Delta:
A\to A\o A$ given by the formula $\Delta(a):= 1\o a-a\o1$.  The
differential $\si$ may be thought of as a contraction operation of
noncommutative differential forms with the derivation $\Delta$; see
\emph{op.~cit.}, \cite{GScyc}, and Section \ref{ss:eqcoh} below for
more details.

There are two features of the construction of cyclic homology given in
this paper which, we believe, are especially appealing.  The first one
is its close analogy with equivariant cohomology.  A well known result
of Goodwillie \cite{Go} says that the cyclic homology of (the de Rham
DG algebra of) a manifold $X$ is isomorphic to the $S^1$-equivariant
cohomology of $L(X)$, the corresponding free loop space. The role of
the {\em extended noncommutative de Rham complex}, introduced in
\S\ref{ss:eqcoh} below, is somewhat analogous to that of the
equivariant de Rham complex of the free loop space. Specifically, our
extended noncommutative de Rham complex comes equipped with a pair of
anti-commuting differentials, the Karoubi-de Rham differential $\sd$
and a differential $\si_t$ which is analogous to the $S^1$-equivariant
differential.  Cyclic homology is then related to the cohomology of
the extended noncommutative de Rham complex via a map, which we call
the `Tsygan map', that intertwines the differentials $\msb$ and
$\msB$, respectively, in the standard $(\msb,\msB)$-complex with the
differentials $\si_t$ and $\sd$, respectively, in the extended
noncommutative de Rham complex.  The Tsygan map may be viewed as a
noncommutative analogue of multiplication by the function $\exp(t)$.

The second feature of our construction is its obvious similarity with
{\em truncated de Rham complexes}. The latter show up in the standard
description of cyclic homology of the coordinate ring of an algebraic
variety. Thus, our construction provides an analogous description in
the noncommutative setting.  Some of the above mentioned analogies can
be made precise via the notion of representation functor and
equivariant Deligne cohomology.  This will be discussed in
\S\ref{s:eqcoh}.

\subsection{Layout of the paper} In \S \ref{sec2}, we introduce the
noncommutative de Rham complex and discuss harmonic decomposition, a
powerful technical tool discovered by Cuntz and Quillen \cite{CQ1},
\cite{CQ2}.

In \S \ref{s:mainconstr}, we give our new construction of cyclic
homology, as well as similar constructions for periodic and negative
cyclic homology. We then use harmonic decomposition to prove that our
construction and the standard one are equivalent.

Section \ref{s:connes} is devoted to the Connes exact sequence.  We
show how to construct such a sequence entirely within our approach. We
also establish the equivalence with the standard construction.

Section \ref{ss:eqcoh} contains some of the most important results of
the paper. We introduce the extended noncommutative de Rham complex
and define extended cyclic homology. Extended cyclic homology groups
come equipped with an additional `weight grading.'  We show that the
positive weight part is isomorphic to the Karoubi-de Rham homology,
while the negative weight part reduces to the usual cyclic homology.
A key role in proving these results is played by the Tsygan map.  We
have arrived at the definition of the Tsygan map by analyzing Tsygan's
construction \cite{T11} (that construction was motivated, in turn, by
our results in \cite{GScyc}).

Section \ref{heq_pf} is devoted to the proof of Theorem \ref{t:heq},
which is perhaps the most difficult result of the paper.  In
\S\ref{negative}, we discuss `extended versions' of periodic and
negative cyclic homology.

In the last section we relate our results to geometry. We begin by
introducing {\em equivariant Deligne cohomology}, an equivariant
counterpart of Deligne cohomology, i.e., of the cohomology theory
based on truncated algebraic de Rham complexes. The {\em equivariant}
Deligne cohomology theory that we define appears to be new.

Next, we give a short review of some standard constructions related to
the representation functor. Recall that, associated with a (not
necessarily commutative) algebra $A$ and an integer $n\geq 1$, there
is an affine scheme $\Rep_n A$ parametrizing $n$-dimensional
representations of $A$, cf.~\cite{CBEG}. This scheme comes equipped
with a natural action of the algebraic group $\GL_n$, by base change
transformations.  An important application of our approach is a
construction of a canonical map from the cyclic homology of $A$ to the
equivariant Deligne cohomology of the scheme $\Rep_n A$.  We should
point out that considering equivariant (as opposed to ordinary)
cohomology of representation schemes is, in a sense, the only natural
thing to do.  Indeed, two representations of the algebra $A$ are
equivalent if and only they belong to the same $\GL_n$-orbit in
$\Rep_n A$.  Thus, the object which is most relevant here is the
quotient stack $\Rep_n A/\GL_n$ rather than the scheme $\Rep_n A$
itself. The $\GL_n$-equivariant Deligne cohomology of the scheme
$\Rep_n A$ may be viewed as the ordinary Deligne cohomology of the
stack $\Rep_n A/\GL_n$.

In the special case where $n=1$ and $A$ is the coordinate ring of a
smooth affine algebraic variety $X$, one has $\Rep_1 A=X$.  In this
case, our construction reduces to, and provides a more explicit form
of, the well known isomorphism between cyclic homology of the
coordinate ring and Deligne cohomology of $X$, cf.~\cite[\S3.6]{L}.

A different relation between cyclic homology of an algebra and its
representation schemes is studied in the recent paper by Berest,
Khachatryan, and A. Ramadoss \cite{BKR}. It does not consider
equivariant cohomology, however.

\subsection{Acknowledgements} We are very much indebted to Boris
Tsygan for his kind explanations of the construction in \cite{T11},
which inspired the present work.

The first author was supported in part by the NSF grant DMS-1001677.
The second author is a five-year fellow of the American Institute of
Mathematics, and was partially supported by the ARRA-funded NSF grant
DMS-0900233.

\section{Noncommutative differential forms}
\label{sec2}
\subsection{The Karoubi-de Rham complex}\label{sec21}
Fix $\bk$, a unital commutative $\bQ$-algebra, and put $\o=\o_\bk$.

Throughout the paper, we fix a unital associative algebra $A$ over the
ground ring $\bk$ satisfying the additional assumption that the natural
map $\ \bk \to A\ $ is a $\bk$-split injection, i.e., there exists a
$\bk$-module direct sum decomposition $A=\bk\oplus \oA$, where
$\oA=A/\bk$.  This assumption trivially holds if $\bk$ is a field.

Associated with $A$, there is a dg algebra $(\Omega A,\sd)$ of
\emph{noncommutative differential forms}, the differential envelope of
$A$ \cite{CQ1}. By definition, $\Om A$ is the quotient of $T_\bk(A\oplus
\oA)$, a free tensor algebra of the $\bk$-module $A\oplus \oA$, by the
relations of the form $a\o b= ab$ and $\sd(ab) = \sd\! a \o b + a \o
\sd\!b$, for all $a,b\in A$, where we use the notation $\sd\!a$
for the class of $a\in A$ in $\oA$.  One puts a grading $\Om^\hdot
A=\bigoplus_{n \geq 0} \Om^nA$, on $\Om A$, by placing $A$ in degree
$0$ and $\oA$ in degree $1$.  The differential $\sd: \Om^\hdot A\to
\Om^{\hdot+1} A$ is given by the obvious assignment: $a\mapsto \sd\!
a\mapsto 0$.  This differential kills $\bk\sset\Om^0A$ and hence
descends to $\oOm A:=\Om A/\bk$, the reduced complex.

As explained in \cite[\S 1]{CQ1}, one has a canonical isomorphism of
left $A$-modules
\begin{equation}\label{aaa}
  A \otimes \oA^{\otimes n}\ \iso\ \Om^n A,
  \quad 
  a_0 \o \sd\!a_1  \o \sd\! a_2 \o  \cdots \o  \sd\! a_{n}\
  \mapsto\
  a_0 \sd\! a_1 \sd\! a_2 \cdots \sd\! a_{n},
  \quad  a_0,\ldots,a_{n} \in 
  A.
\end{equation}
From this and the assumption that $\bk\into A$ is split it follows
that $(\oOm A, \sd)$ is acyclic; see \cite[\S 1]{CQ1}.

For every graded algebra $B$ and graded vector subspace $V\subseteq
B$, let $[V,V]$ denote the linear span of (super)commutators
$[v_1,v_2],\ v_1,v_2\in V$.  Following Karoubi \cite{Karhckt}, define
the noncommutative de Rham complex of $A$ and its reduced version by
$$
\DR A:=\Omega A/[\Omega A,\,\Omega A],\quad\text{and}\quad \oDR A:=
(\DR A)/\bk\ =\ \oOm A/\overline{[\Om A,\,\Om A]}.
$$
We write $f\cc$ for the image of an element $f\in\Om A$ under the
natural projection $\Om A\onto\DR A$.  

The differential $\sd$ on $\Omega A$ descends to $\DR A$. Let
$\HD_n(A)$ and $\oHD_n(A)$ denote the $n$-th homology of $(\DR A,
\sd)$ and $(\oDR A, \sd)$, respectively. We will call these the
\emph{Karoubi-de Rham homology} and its reduced version.

Following \cite{CQ1, CQ2}, the Hochschild differential on $\Omega A$
(or on $\oOm A$) is given by the formula
\begin{equation}\label{e:bfla}
  \msb (\alpha \sd\! a) = (-1)^{n-1}[\alpha, a], \enspace
  \alpha\in \Om^n A,\ a\in A,\enspace\text{and}\enspace \msb|_{\Om^0A} = 0.
\end{equation}

A crucial role below will also be played by another map $\bi: \Om A
\to \Omega A$, defined by
\begin{equation}\label{si}
  \bi(a_0 \sd\! a_1 \cdots \sd\! a_n) = \sum_{\ell=1}^n 
  (-1)^{(\ell-1)(n-1)+1}[a_\ell,\
  \sd\! a_{\ell+1} \cdots \sd\! a_n a_0 \sd\! a_1 \cdots \sd\!
  a_{\ell-1}]
  \ \in\ [A, \Om^{n-1}A].
\end{equation}
It is straightforward to check that the map $\bi$ vanishes on $[\Om A,
\Om A]$ and hence descends to a well defined map $\si: \DR^\hdot
A\to\Om^{\hdot-1}A$.  It follows that $\bi^2=0$. In addition, one
can check that $\sd\si+\si\sd=0$.

The map $\bi$ was first introduced in \cite{CBEG}.  A more conceptual
definition of this map (see \S\ref{ss:eqcoh}) was discovered in
\cite{GScyc}.  From that definition the above stated properties become
immediate consequences of noncommutative calculus.

\subsection{Harmonic decomposition} \label{ss:harmdec} Since the
algebra $A$ is fixed throughout the paper, we will often use
simplified notation $\Om=\Om A,\ \oOm=\oOm A,\ \DR=\DR A$, etc.

Following \cite{Karhckt}, define the Karoubi operator as
\begin{equation}\label{kap}
  \kappa: \oOm \to \oOm, \quad\kappa(\alpha\, \sd\! a) = 
  (-1)^{|\alpha|} \sd\! a\,
  \alpha,
  \quad \alpha\in \Om,\ a \in A; 
  \quad \kappa|_{\oOm^0} = 0.
\end{equation}
The Karoubi operator is related to the operators $\msb$ and $\sd$ by
\cite{Karhckt,CQ1}
\begin{equation}\label{CQid1} 
\msb \sd +\sd\msb =\Id-\kappa.
\end{equation}

According to \cite[\S 2]{CQ2}, there is a direct sum decomposition
$$
\oOm = P\oOm \oplus P^\perp \oOm, \quad P \oOm := \ker(\Id-\kappa)^2,
\quad P^\perp \oOm := \im(\Id - \kappa)^2.
$$
Let $P$ and $P^\perp$ denote the projections onto the first and second
summands of this decomposition, respectively, which we will call the
harmonic and antiharmonic parts.

It follows from \eqref{CQid1} that $\kappa$ commutes with $\sd$ and
$\msb$. Hence, harmonic decomposition is stable under $\kappa$,
$\msb$, and $\sd$, and it induces a similar decomposition on
$H(\Om,\msb)$ and $H(\Om,\sd)$.

Let $\sN$ be the grading operator on $\oOm$, defined by $\sN|_{\oOm^n}
= n \cdot \Id$.  With the above notation, formula \eqref{si} may be
rewritten in the form \cite{GScyc}
\begin{equation} \label{e:iP} \bi|_{\oOm^n}\ =\ (\Id + \kappa + \cdots
  + \kappa^{n-1}) \msb|_{\oOm^n A}\ =\ \msb \sN P|_{\oOm^n}.
\end{equation}

Finally, following \cite{CQ2}, one defines the Connes differential by
\begin{equation} \label{connes} \msB|_{\oOm^n}\ :=\ (\Id + \kappa +
  \cdots + \kappa^n) \sd|_{\oOm^n} \ =\ \sN \sd P|_{\oOm^n}.
\end{equation}

The above formulas show that the operators $\bi$ and $\msB$ respect
harmonic decomposition and one has
$$\bi\sd=\msb\sN\sd P=\msb\msB,
\quad\text{and}\quad \sd\bi=\sd\msb\sN P=\sN \sd\msb P=\msB\msb.$$
Thus, the equations $\sd\bi+\bi\sd=0$ and $\msB\msb+\msb\msB=0$ are
equivalent.

The next lemma collects various technical properties of harmonic
decomposition which will be used in various proofs (but not in the
statements) in subsequent sections.

\begin{lemma}\label{tech} One has
\begin{gather}
  P\overline{[\Om,\Om]}\ =\ P\msb\oom\ =\ \bi\, \oom\label{tech1}\\
  P^\perp\overline{[\Om,\Om]}\ =\ P^\perp\oom\label{tech2}\\
  P^\perp\sd\oom\ =\ (\Id-\kap)\sd\oom\ =\ \overline{[\sd\Om,\sd\Om]}
  \label{tech3}\\
  P^\perp\msb\oom\ =\ (\Id-\kap)\msb\oom.\label{tech4}
\end{gather}
\end{lemma}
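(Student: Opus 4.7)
The plan is to derive all four identities from three ingredients: (i) $P$ and $P^\perp$ commute with $\msb$, $\sd$, and $\sN$ (since all commute with $\kappa$); (ii) the Cuntz-Quillen identity \eqref{CQid1}; and (iii) the elementary observation that, for any positive-degree form $\alpha\sd a$,
\[
(\Id-\kappa)(\alpha\sd a) \;=\; \alpha\sd a - (-1)^{|\alpha|}\sd a\cdot\alpha \;=\; [\alpha,\sd a]_{\mathrm{super}} \;\in\;\overline{[\Om,\Om]}.
\]
I will also use the Cuntz-Quillen fact that $\msb\oom^n\subseteq\ker(\kappa^n-\Id)$, which (combined with the analogous statement $\kappa^{n+1}=\Id$ on $\sd\oom^n$, verified below) ensures that $\kappa$ acts semisimply on the images of $\msb$ and $\sd$.

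I would begin with \eqref{tech3}. From $\kappa(\sd a_0\cdots\sd a_n) = (-1)^n\sd a_n\sd a_0\cdots\sd a_{n-1}$ one reads off $\kappa^{n+1}=\Id$ on $\sd\oom^n$, so $\kappa$ is semisimple there and $P^\perp\sd\oom = (\Id-\kappa)\sd\oom$. Observation (iii) applied to $\sd\Om$ gives $(\Id-\kappa)(\sd a_0\cdots\sd a_n) = [\sd a_0\cdots\sd a_{n-1},\sd a_n]_{\mathrm{super}}\in\overline{[\sd\Om,\sd\Om]}$; conversely, every super-commutator in $\sd\Om$ has the form $[\sd a_0\cdots\sd a_p,\sd a_{p+1}\cdots\sd a_n]_{\mathrm{super}} = (\Id-\kappa^{n-p})(\sd a_0\cdots\sd a_n) \in (\Id-\kappa)\sd\Om$. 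Identity \eqref{tech2} then follows immediately from (iii): $(\Id-\kappa)\oom\subseteq\overline{[\Om,\Om]}$ in positive degrees, and applying $P^\perp$ together with the invertibility of $\Id-\kappa$ on $P^\perp\oom$ yields $P^\perp\oom = P^\perp(\Id-\kappa)\oom\subseteq P^\perp\overline{[\Om,\Om]}$.

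For \eqref{tech1}, the equality $P\msb\oom = \bi\oom$ is immediate from \eqref{e:iP}: on $\oom^n$, $\bi = \msb\sN P = n\,\msb P = n\,P\msb$, a nonzero multiple of $P\msb$ in each positive degree (with both sides vanishing on $\oom^0$). The inclusion $\bi\oom\subseteq P\overline{[\Om,\Om]}$ is then clear from \eqref{si} and $P\bi = \bi$. For the reverse $P\overline{[\Om,\Om]}\subseteq\bi\oom$, super-Leibniz reduces arbitrary super-commutators $[\alpha,\beta]_{\mathrm{super}}$ to the base cases $[\alpha,a] = \pm\msb(\alpha\sd a)\in\msb\Om$ and $[\alpha,\sd b]_{\mathrm{super}} = (\Id-\kappa)(\alpha\sd b)$; the $P$-component of the latter is rewritten by \eqref{CQid1} as $(\msb\sd+\sd\msb)P(\alpha\sd b)$, and the $\sd\msb$-term is absorbed into $\msb\Om$ by an iteration that terminates because $(\Id-\kappa)^2|_P = 0$.

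Identity \eqref{tech4} rests on the observation that $\bi\oom\subseteq\ker(\Id-\kappa)$: since $\msb\oom^n\subseteq\ker(\kappa^n-\Id)$, the operator $\kappa$ is semisimple on $\msb\oom^n$, and the cyclic average $(\Id+\kappa+\cdots+\kappa^{n-1})$ of \eqref{e:iP} coincides with $n$ times the projection onto the $1$-eigenspace there. Consequently, since $\bi$ and $\msb P$ differ by a nonzero scalar on each positive-degree piece, $(\Id-\kappa)\msb P\oom = 0$, and so
\[
(\Id-\kappa)\msb\oom \;=\; \msb(\Id-\kappa)(P\oom + P^\perp\oom) \;=\; \msb P^\perp\oom \;=\; P^\perp\msb\oom;
\]
the reverse containment $P^\perp\msb\oom = (\Id-\kappa)^2\msb\oom\subseteq(\Id-\kappa)\msb\oom$ is automatic. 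The main obstacle will be the reverse inclusion in \eqref{tech1}: realizing the $P$-component of a general super-commutator as a Hochschild boundary requires a delicate induction using super-Leibniz together with the Cuntz-Quillen identity \eqref{CQid1}, with careful bookkeeping of the $P^\perp$-contributions generated at each step.
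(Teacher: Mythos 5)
Your proof of \eqref{tech3} is correct and a bit more explicit than the paper's (which just cites Cuntz--Quillen for finite order of $\kappa$ on $\sd\oOm$), and your derivations of \eqref{tech2} and of the second equality $P\msb\oOm=\bi\oOm$ in \eqref{tech1} from \eqref{e:iP} via $\bi|_{\oOm^n}=n\,P\msb$ are both fine and essentially the same as the paper's. Your argument for \eqref{tech4}, using the fact that $\kappa$ has finite order on $\msb\oOm$ (which, together with $(\Id-\kappa)^2P=0$, forces $\kappa=\Id$ on $\msb P\oOm$), is also sound; the paper phrases this slightly differently by first noting finite order on $\oOm/\msb\oOm$ and transferring it to $\msb\oOm$.

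The genuine gap is exactly where you flag it: the inclusion $P\overline{[\Om,\Om]}\subseteq\msb P\oOm$. After reducing, via super-Leibniz and \eqref{e:bfla}, to $\overline{[\Om,\Om]}=\msb\oOm+(\Id-\kappa)\oOm$, the question becomes whether $(\Id-\kappa)P\oOm\subseteq\msb\oOm$. Your proposed ``iteration'' relies only on the relation $(\Id-\kappa)^2P=0$, and that is not enough: if $\Id-\kappa$ restricted to $P\oOm$ is merely nilpotent of order $2$ (a Jordan block), nothing forces its image into $\msb\oOm$. Unwinding your iteration, one finds $(\Id-\kappa)\sd\msb P\oOm\subseteq\msb\oOm$ (from $\sd\msb\sd\msb P+\msb\sd\msb\sd P=0$), but the ``next step'' never shrinks $\sd\msb P\oOm$ modulo $\msb\oOm$: degrees are preserved by $\sd\msb$, and $\kappa$ already acts as the identity on $P\sd\oOm$, so repeated application gives only the tautology $\sd\msb\gamma\equiv\sd\msb\gamma$. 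The missing input is the Cuntz--Quillen observation that $\kappa$ has finite order on the \emph{quotient} $\oOm/\msb\oOm$ (not on $\msb\oOm$, which is what your cited fact addresses). Since $(\Id-\kappa)^2$ acts by zero on $P(\oOm/\msb\oOm)$ and $\kappa$ has finite order there, $\kappa$ must act as the identity on $P(\oOm/\msb\oOm)$, i.e.\ $(\Id-\kappa)P\oOm\subseteq\msb\oOm$; combined with \eqref{bracket1} and the harmonic decomposition this closes the argument immediately. So you need to invoke this additional Cuntz--Quillen statement about $\oOm/\msb\oOm$ rather than rely on bookkeeping from $(\Id-\kappa)^2P=0$ alone.
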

\begin{proof} Given a $\kap$-action on a vector space $V$, let
  $V^\kap=\ker(\Id-\kap)|_V$ denote the spaces of $\kap$-invariants.
  If the $\kap$-action on $V$ has finite order then 
  $V=V^\kap\oplus (\Id-\kap)V$; in particular, the operator
  $(\Id-\kap)^2$ acts on $V$ by zero if and only if $V=V^\kap$.

  It was observed in \cite{CQ2} that $\kap$ has finite order on
  $\sd\oom$ and $\oom/\msb\oom$. It follows that $\kap$ has finite
  order also on $\oom/\sd\oom\cong\sd\oom$ 
  and on $\msb\oom = \msb(\oom/\msb\oom)$.  Each of these spaces is
  $\kap$-stable.  The first equation in \eqref{tech3} and equation
  \eqref{tech4} follow. Similarly, we deduce that the operator
  $\Id-\kap$ annihilates $P\oom/P\msb\oom$, i.e., $(\Id-\kap)P\oom
  \sset \msb P\oom$.

  Formula \eqref{kap} readily implies the second equation in
  \eqref{tech3}; it shows also that $(\Id-\kappa)\oOm=\overline{[\sd\!
    A,\Om]}$.  Similarly, from formula \eqref{e:bfla} we get that
  $\msb \oOm=\overline{[A,\Om]}$.  Thus we obtain (cf.~\cite{CQ2}):
\begin{equation} \label{bracket1}
\overline{[\Om,\Om]} = \overline{[A,\Om]} +\overline{[\sd\!A,\Om]} =
\msb\oOm+(\Id-\kappa)\oOm.
\end{equation}
Applying $P$ to this equation and using the inclusion $(\Id-\kap)P\oom
\sset \msb P\oom$ proved above, we deduce the first equation in
\eqref{tech1}. The second equation in \eqref{tech1} is clear from
\eqref{e:iP}.

Finally, since $\Id-\kappa$ is invertible on $P^\perp\oOm$, formula
\eqref{bracket1} shows that $P^\perp\oOm \subseteq (\Id-\kappa)\oOm
\subseteq \overline{[\Om,\Om]}$. This yields the inclusion
$P^\perp\oOm \subseteq P^\perp\overline{[\Om,\Om]}$. The opposite
inclusion is obvious, proving \eqref{tech2}.
\end{proof}

\section{Main constructions}\label{s:mainconstr}

\subsection{Construction of Hochschild and cyclic homology}
Let $\oOm A \bbu$ be the space of formal Laurent series with
coefficients in $\oOm A$.  We assign the variable $u$ degree $-2$, so
that the total degree of an element $f\in\oOm^pA\cdot u^{-r}$ equals
$|f|=p+2r$.  Thus, each of the differentials $u\msB$ and $\msb$ has
degree $-1$.

Standard constructions of various versions of cyclic homology involve
the $\bk[u]$-modules $\bk[\![u]\!]\sset\bk\bbu$ and $R :=
\bk(\!(u)\!)/u\bk[\![u]\!]$.  Specifically, reduced {\em relative} (to
$\bk$) Hochschild and cyclic homology of $A$ are defined as follows
\begin{align}
  \oHH_\idot(A) &= H(\oOm A,\, \msb), & \oHC_\idot(A) &=
  H(\oOm A \otimes R,\  \msb - u \msB),\label{e:gs-hh} \\
  \oHC^\per_\idot(A) &= H(\oOm A\bbu,\ \msb - u \msB), &
  \oHC^-_\idot(A) &= H(\oOm A[\![u]\!],\ \msb - u \msB).\label{e:hhh}
\end{align}

A central result of this article is 
\begin{theorem}\label{HHHC} For a $\bk$-algebra
$A$ satisfying our standing assumptions,
one has canonical isomorphisms:
\begin{align}
\HH_\idot(A) \ &\iso\ \ker(\si: \oDR^\hdot \to
\oOm^{\hdot-1}).\label{hhdef}\\
\HC_\idot(A)\ &\iso\ \ker(\si: \oDR^\hdot / \sd\! \oDR^{\hdot-1} \to
 \oOm^{\hdot-1} 
 / \sd\! \oOm^{\hdot-2} ).\label{hcdef}
\end{align}
Here, the first isomorphism is induced by the map $\oOm \to \oDR,\
f\mto f\cc$ and the second isomorphism is induced by the map $\oOm \o
R \to \oDR,\ \sum_{k\geq0}\ f_ku^{-k} \mto (f_0)\cc$.
\end{theorem}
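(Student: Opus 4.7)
The plan is to use harmonic decomposition to reduce both isomorphisms to explicit computations on the $P$-summand of $\oom$. From Lemma~\ref{tech}, one extracts the identification $\oDR^n\cong P\oom^n/P\msb\oom^{n+1}$ for $n\geq 1$; the degree-$0$ case is handled separately, since both sides of each isomorphism manifestly equal $\oA/\overline{[\oA,\oA]}$. The Cuntz--Quillen relation \eqref{CQid1} together with invertibility of $\Id-\kap$ on $P^\perp\oom$ (together with $\sd\kap=\kap\sd$) provides the contracting homotopy $(\Id-\kap)^{-1}\sd$ showing that $(P^\perp\oom,\msb)$ is acyclic in positive degrees, while formula \eqref{connes} gives $\msB|_{P^\perp\oom}=0$. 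Hence the natural inclusions $(P\oom,\msb)\hookrightarrow(\oom,\msb)$ and $(P\oom\o R,\msb-u\msB)\hookrightarrow(\oom\o R,\msb-u\msB)$ are quasi-isomorphisms in positive degrees.

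For \eqref{hhdef}, formula \eqref{e:iP} gives $\si=n\msb$ on $P\oom^n$, so in characteristic zero $\ker\si$ and $\ker\msb$ coincide there. Hence
$$\ker\bigl(\si\colon\oDR^n\to\oom^{n-1}\bigr) \;=\; \frac{\ker\msb\cap P\oom^n}{P\msb\oom^{n+1}} \;=\; H_n(P\oom,\msb) \;\cong\; \HH_n,$$
with the isomorphism realized by $f\mapsto f\cc$ as claimed in the theorem.

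For \eqref{hcdef}, I first rescale on the harmonic part. Define $\phi\colon P\oom\to P\oom$ by $\phi|_{P\oom^n}=(n!)^{-1}\Id$; using $\si=n\msb$ and $\msB=(n+1)\sd$ on $P\oom^n$, one directly checks that $\phi\cd(\msb-u\msB)=(\si-u\sd)\cd\phi$, so $\HC_\idot\cong H(P\oom\o R,\si-u\sd)$. A cycle in total degree $n$ is a sequence $\sum_{k\geq 0}f_k u^{-k}$ with $f_k\in P\oom^{n-2k}$ (and $f_k=0$ out of range) satisfying $\si f_k=\sd f_{k+1}$. Define $\Phi(\sum f_k u^{-k}):=[f_0]\in\oDR^n/\sd\oDR^{n-1}$. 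Under the harmonic identification $\oDR^n/\sd\oDR^{n-1}\cong P\oom^n/(P\msb\oom^{n+1}+\sd P\oom^{n-1})$, the cycle equation at $k=0$ forces $n\msb f_0=\sd f_1\in\sd\oom^{n-2}$, so $\Phi$ takes values in $\ker(\si\colon\oDR^\idot/\sd\oDR^{\idot-1}\to\oom^{\idot-1}/\sd\oom^{\idot-2})$; boundaries $\si g_0-\sd g_1$ lie manifestly in $P\msb\oom^{n+1}+\sd P\oom^{n-1}$, so $\Phi$ descends to homology.

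The main obstacle is showing $\Phi$ is bijective. The key input is the acyclicity of $(P\oom,\sd)$, which follows from the acyclicity of $(\oom,\sd)$ noted in \S\ref{sec21} and the $\sd$-stable direct sum $\oom=P\oom\oplus P^\perp\oom$. For surjectivity, given $[f_0]\in\ker\si$, the relation $\msb f_0\in\sd\oom^{n-2}$ combined with $\sd$-stability of the splitting yields $\si f_0\in\sd P\oom^{n-2}$, producing $f_1$; inductively $\sd(\si f_k)=-\si\sd f_k=-\si^2 f_{k-1}=0$, so $\si f_k$ is $\sd$-closed in $P\oom$, hence $\sd$-exact by acyclicity, supplying $f_{k+1}$. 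The recursion terminates by degree count in both parities of $n$ (since $P\oom^0=0$ and $\sd$ is injective on $P\oom^1$). For injectivity, if $f_0=\msb\tilde g+\sd\tilde h$ with $\tilde g\in P\oom^{n+1},\tilde h\in P\oom^{n-1}$, the bounding chain begins with $\frac{1}{n+1}\tilde g\cd u^0-\tilde h\cd u^{-1}$; a direct computation shows the remaining cycle condition reduces to the same acyclicity problem solved in the surjectivity argument.
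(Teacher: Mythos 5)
Your proof is correct and follows essentially the same strategy as the paper: harmonic decomposition, acyclicity of $(P^\perp\oOm,\msb)$, and the rescaling identities $\bi=\sN\msb$ and $\msB=\sN\sd$ on $P\oOm$. The paper routes through the Cuntz--Quillen isomorphism $\oHC(A)\iso H(\oOm/\msB\oOm,\msb)$ (Lemma~\ref{cq}, proved by a spectral sequence) and then rewrites the kernel--cokernel expression, whereas you convert directly to the $(\bi,u\sd)$--complex via the chain isomorphism $\phi$ and build cycles and bounding chains by hand using $\sd$--acyclicity of $P\oOm$; the latter is just an explicit unraveling of the former. One small discrepancy worth flagging: your composite $\Phi\ccirc\phi$ sends a cycle $\sum_k f_k u^{-k}$ (of de~Rham degree $n$ in its $u^0$--part) to $\frac{1}{n!}(f_0)\cc$, which is the map of the theorem only after post--composing with the degree--wise unit $\sN!$; since $\bQ\subseteq\bk$ this does not affect the isomorphism claim, but if you want the literal map $(f_0)\cc$ of the statement you should either drop $\phi$ and work degree--wise with $\msb P\oOm=\bi P\oOm$ and $\msB P\oOm=\sd P\oOm$ as the paper does, or note the $\sN!$ adjustment explicitly.
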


Isomorphism \eqref{hhdef} has been already established 
 in \cite[Theorem 4.1.1]{GScyc}. However, we will reproduce the proof,
 since most of the intermediate steps will also be used
in the proof of ~\eqref{hcdef}.

\proof[Proof of Theorem \ref{HHHC}] First of all, from harmonic
decomposition for $\oom$ and $\overline{[\Om,\Om]}$, respectively,
using equation \eqref{tech2}, we see that the projection $\oom\onto
P\oom$ induces an isomorphism $\oom/\overline{[\Om,\Om]}\ \iso$
$P\oom/P\overline{[\Om,\Om]}$.  Further, equation \eqref{tech1} yields
$P\oom/\msb P\oom=P\oom/P\overline{[\Om,\Om]}$.  Also, the map $\si$
annihilates the space $P^\perp\oOm$, by \eqref{e:iP}. Thus, we obtain
a commutative diagram
\begin{equation}\label{e:pdr}
  \xymatrix{
    P\oom/\msb P\oom\ \ar[dr]_{\msb\sN}\ar@{=}[r]&
    \ P \oOm / P \overline{[\Om,\Om]}\ \ar@{=}[r]\ar[d]^{\bi}& 
    \ \oOm /\overline{[\Om,\Om]}\ \ar@{=}[r]&\ \oDR \ar[d]^\si \\
    & \ P \oOm\  \ar@{^{(}->}[rr] &&\  \oOm.
  }
\end{equation}

We see from the diagram that the assignment $f\mto f\cc$ maps the
space $H(P\oOm, \msb)\ =\ \ker(\msb: P\oOm / \msb P\oOm \to P \oOm)$
isomorphically onto the space $\ker(\si: \oDR \to \oOm)$.

 Next, since $(\oOm, \sd)$ is acyclic, it follows that its harmonic
 and anti-harmonic parts, $(P \oOm, \sd)$ and $(P^\perp \oOm, \sd)$,
 are acyclic as well.  Hence also $(P \oOm, \msB)$ is acyclic, by
 \eqref{connes}.  Cuntz and Quillen observed in \cite{CQ2} that this
 implies that the cohomology of all of the complexes appearing in
 \eqref{e:gs-hh}--\eqref{e:hhh} are not affected by the replacement of
 the space $\oOm$ by its harmonic part $P \oOm$. Specifically, it
 follows from the above, using equations \eqref{CQid1} and
 \eqref{connes}, that the following complexes are acyclic:
\[
(P^\perp \oOm, \msb),\quad (P^\perp \oOm(\!(u)\!), \msb - u
\msB),\quad (P^\perp \oOm[\![u]\!], \msb - u \msB),\quad (P^\perp
\oOm\o R, \msb - u \msB).
\]

Thus, the assignment $f\mto f\cc$ gives an isomorphism
$H(P^\perp \oOm, \msb)\iso0$, and \eqref{hhdef} follows.

To proceed further, we need the following result of Cuntz and Quillen,
\cite[Proposition ~3.1]{CQ2}.

\begin{lemma}\label{cq} The projection
modulo $u^{-1}$ and $\im(\msB)$ induces an isomorphism 
\begin{equation}\label{cqmap}
  \oHC_\idot(A)  = H_\idot(\oOm \otimes R, \msb - u \msB)\ \iso\
  H_\idot(\oOm/\msB\oOm, \msb),
  \quad \mbox{$\sum$}_{k\geq0}\ f_k\cdot u^{-k}\ \mto\ f_0\mmod \msB\oOm.
\end{equation}
\end{lemma}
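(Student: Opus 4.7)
The plan is to verify that $\pi$ is a chain map, reduce to the harmonic subcomplex as in the proof of \eqref{hhdef}, and construct an explicit quasi-inverse using the acyclicity of $(P\oOm,\msB)$.

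The chain-map property is immediate: since $\msb\msB+\msB\msb=0$, the operator $\msb$ descends to $\oOm/\msB\oOm$, and $\pi((\msb-u\msB)(\sum_k f_k u^{-k})) = \msb f_0 - \msB f_1 \equiv \msb f_0 \pmod{\msB\oOm}$. The reduction to the harmonic part uses the same acyclicity results exploited in the proof of \eqref{hhdef}: both $(P^\perp\oOm\otimes R, \msb-u\msB)$ and $(P^\perp\oOm,\msb)$ are acyclic, and because $\msB=\sN\sd P$ takes values in $P\oOm$ one has $\oOm/\msB\oOm = P\oOm/\msB P\oOm \oplus P^\perp\oOm$ with the second summand $\msb$-acyclic. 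Hence it suffices to treat the harmonic restriction $\pi\colon (P\oOm\otimes R,\msb-u\msB)\to(P\oOm/\msB P\oOm,\msb)$.

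On $P\oOm$ the complex $(P\oOm,\msB)$ is acyclic, so I would fix a contracting homotopy $\eta$ with $\msB\eta+\eta\msB=\Id$ and define a $\bk$-linear section
\[
s(\omega) := \sum_{k\geq 0}(\eta\msb)^k\omega\cdot u^{-k},
\]
a finite sum since $(\eta\msb)^k$ lowers form-degree by $2k$. A short induction based on $\msB\eta=\Id-\eta\msB$, $\msB\msb=-\msb\msB$, and $\msb^2=0$ yields $\msB((\eta\msb)^{k+1}\omega) = \msb((\eta\msb)^k\omega)$ whenever $\msb\omega\in\msB P\oOm$ (the base case uses $\msB\msb\omega=0$, which holds because $\msb\omega\in\msB P\oOm$ and $\msB^2=0$); hence $s(\omega)$ is a $d$-cycle and $\pi(s(\omega))\equiv\omega\pmod{\msB P\oOm}$.

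The main obstacle is to verify that $s$ descends to a well-defined map on homology providing a two-sided inverse of $\pi_*$. For well-definedness one needs $s$ to carry $\msb P\oOm + \msB P\oOm$ into $d$-boundaries: the identity $s(\msb\gamma)=\msb\gamma=d\gamma$ (using $u\cdot 1=0$ in $R$) handles $\msb$-boundaries, and a parallel induction identifies $s(\msB\delta)$ as $dh$ for $h:=-\sum_{k\geq 0}(\eta\msb)^k\delta\cdot u^{-k-1}$, the key identity being $\msB\eta\msb = \msb + \eta\msb\msB$. For the injectivity of $\pi_*$, given a $d$-cycle $c=\sum c_k u^{-k}$ whose image in $\oOm/\msB\oOm$ is a $\msb$-boundary, I would reduce to $c_0\in\msB P\oOm$ and then peel off $u^{-k}$-components one at a time, using at each step the acyclicity identity $\ker\msB|_{P\oOm}=\msB P\oOm$ to produce the next correction $\delta_j$ satisfying $\msB\delta_j = c_j^{(j-1)}$; the procedure terminates because $c$ is finitely supported and each peeling step strictly lowers the form-degree of the residue.
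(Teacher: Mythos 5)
Your proof is correct, and it shares with the paper the essential reduction: split into harmonic and antiharmonic parts, observe that $\msB$ vanishes on $P^\perp\oOm$ so that $\oOm/\msB\oOm = P\oOm/\msB P\oOm\oplus P^\perp\oOm$ with the second summand $\msb$-acyclic, and then exploit that $(P\oOm,\msB)$ is acyclic on the harmonic part. Where you depart from the paper is in the final step. The paper filters $P\oOm\otimes R$ by $u$-degree and runs the associated spectral sequence, concluding from $\msB$-acyclicity that it collapses at $E_2$ to $H(P\oOm/\msB P\oOm,\msb)$ -- a two-line abstract argument. You instead fix a contracting homotopy $\eta$ for $\msB$ on $P\oOm$ and write down an explicit chain-level section $s(\omega)=\sum_{k\ge 0}(\eta\msb)^k\omega\,u^{-k}$, then verify by induction (using $\msB\eta\msb=\msb+\eta\msb\msB$) that $s$ takes cycles to cycles and boundaries to boundaries, and finally obtain injectivity of $\pi_*$ by a peeling argument that again invokes $\ker\msB|_{P\oOm}=\msB P\oOm$. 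The two approaches are manifestations of the same homological algebra fact; yours has the virtue of producing an explicit formula for the inverse (which the spectral-sequence argument does not), whereas the paper's is shorter and avoids any auxiliary choice such as $\eta$. One minor wording quibble: in your last sentence "strictly lowers the form-degree of the residue" would be clearer phrased as "strictly increases the minimal $u$-exponent at which the representative is nonzero"; the argument terminates because the representative has finitely many nonzero $u$-coefficients.
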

\begin{proof}
Introduce an increasing filtration on $\oOm \otimes R$ 
as follows:
$$F_j (P\oOm \otimes R)\
:=\ P\oOm \otimes (u^{-j}\bk[\![u]\!]/u\bk[\![u]\!])\ =\
P\oOm\o\Span(u^{-j}, u^{-j+1}, \ldots, u^{-1}, 1).$$

The fact that the complex $(P\oOm \otimes R,\ \msB)$ is acyclic
implies that the standard spectral sequence associated to our
filtration collapses at the second page to $H(P\oOm / \msB P \oOm,
\msb)$.  We conclude that the map $\sum\nolimits_{k\geq0}\ f_k\cdot
u^{-k}\ \mto\ f_0$ yields an isomorphism $H(P\oOm \otimes R,\
\msb-u\msB)\iso H(P\oOm / \msB P \oOm,\ \msb)$, of harmonic
components.  The same map also gives an isomorphism $H(P^\perp\oOm
\otimes R,\ \msb-u\msB) \iso H(P^\perp\oOm / \msB P^\perp \oOm,\
\msb)$, of anti-harmonic components. Since $\msB(P^\perp \oOm)=0$, the
second cohomology group is $H(P^\perp\oOm, \msb)=0$, so these
cohomology groups vanish.
\end{proof}



The isomorphism in \eqref{hcdef} is now obtained as a composition:
\begin{align*}
  \oHC(A) &=\ H(P\oOm / \msB P\oOm,\ \msb)
  \qquad\text{by  the proof of Lemma \ref{cq}}\\
  &=\
  \frac{\{f\in P\oOm \mid \msb f\in \msB P\oOm\}}{\msb P\oOm +\msB P \oOm}\\
  &=\ \ker(\msb: P\oOm / (\msb P \oOm + \msB\! P \oOm) \to P \oOm /
  \msB\! P
  \oOm)\\
  & =\ \ker\bigl(\si: (P\oOm /\si P \oOm)/\sd(P\oOm /\si P \oOm) \to P
  \oOm / \sd\! P \oOm\bigr)
  \quad\text{by  \eqref{e:iP}--\eqref{connes}}\\
  &=\ \ker(\si: \oDR / \sd\! \oDR \to \oOm / \sd\! \oOm),
  \qquad\text{by \eqref{e:pdr}.} \qedhere
\end{align*}

\subsection{Negative and periodic cyclic
  homology}\quad \label{ss:negper}
Following \cite{GScyc}, we consider the bicomplex $(\oOm\bbu,$ $ u\sd,
\bi),$ where each of the differentials $u\sd$ and $\bi$ has degree
$-1$. The result below provides an interpretation of periodic,
negative, and ordinary cyclic homology in terms of the noncommutative
de Rham complex (in the latter case, this differs from the description
of the previous section).

Note that the space $\overline{[\sd\! \Om , \sd\! \Om]} \subseteq \oOm
$ is annihilated by each of the differentials $\sd$ and $\bi$. We view
it also as a subspace of $\oOm[\![u]\!]$ (and $\oOm(\!(u)\!)$), as
constant terms in $u$.
\begin{theorem}\label{t:perneg}
There are natural isomorphisms
\begin{align}
  \oHC^\per_\idot(A) \label{ccper} &\cong H_{\idot}(\oOm \bbu, \bi -
  u\sd)
  \\
  \oHC^-_\idot(A) \label{hc-} &\cong H_\idot(\oOm [\![u]\!], \bi-u
  \sd)/\overline{[\sd\! \Om , \sd\! \Om]}
  \\
  \oHC_\idot(A) \label{e:bihc} &\cong H_\idot(\oOm \otimes R, \bi -
  u\sd)/\overline{[\sd\!\Om,\sd\!\Om]}.
\end{align}
\end{theorem}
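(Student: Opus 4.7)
The plan is to compare $(\bi - u\sd)$ with $(\msb - u\msB)$ using the Cuntz--Quillen harmonic decomposition $\oOm = P\oOm\oplus P^\perp\oOm$. Since $\sd$ and $\bi$ commute with $\kap$, the splitting is preserved by all differentials in sight, and $\bi|_{P^\perp\oOm} = 0$ by \eqref{e:iP}. On the $(\msb - u\msB)$-side, the three anti-harmonic complexes $(P^\perp\oOm \bbu,\msb - u\msB)$, $(P^\perp\oOm[\![u]\!],\msb - u\msB)$, $(P^\perp\oOm\otimes R, \msb - u\msB)$ are all acyclic (as used in the proof of Theorem \ref{HHHC}), so $\oHC^\per$, $\oHC^-$, and $\oHC$ are all computed entirely on the harmonic part. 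The overall strategy is thus: match the two harmonic sub-complexes by an explicit rescaling, then identify the remaining anti-harmonic $(\bi - u\sd)$-cohomology with the $\overline{[\sd\Om,\sd\Om]}$-quotient appearing in the statement.

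For the matching step, the identities $\bi|_{P\oOm^n} = n\,\msb$ and $\msB|_{P\oOm^n} = (n+1)\,\sd$ coming from \eqref{e:iP} and \eqref{connes} suggest the weight-graded endomorphism $\phi\colon P\oOm\bbu \to P\oOm\bbu$ defined by $\phi(fu^k) := \tfrac{1}{n!}\, f u^k$ for $f\in P\oOm^n$. A direct calculation, using the two identities above, shows $\phi\circ(\msb - u\msB) = (\bi - u\sd)\circ\phi$, so $\phi$ is an isomorphism of chain complexes. Since $\phi$ commutes with multiplication by $u$, it restricts to chain isomorphisms on $P\oOm[\![u]\!]$ and on $P\oOm\otimes R$, furnishing canonical isomorphisms from the harmonic part of each $(\bi - u\sd)$-cohomology to the corresponding $\oHC^\per$, $\oHC^-$, or $\oHC$.

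Finally, I compute $H(P^\perp\oOm\bullet, -u\sd)$ for each $\bullet\in\{\bbu,[\![u]\!],\otimes R\}$ by filtering by $u$-power and invoking the acyclicity of $(P^\perp\oOm,\sd)$ (a consequence of the acyclicity of $(\oOm,\sd)$ together with the $\kap$-stability of harmonic decomposition). When $\bullet = \bbu$, $u$ is invertible, so the complex is acyclic and \eqref{ccper} follows immediately. When $\bullet = [\![u]\!]$, the absence of negative $u$-powers forces the cohomology to concentrate in the $u^0$-slot, where it equals $\ker\sd\cap P^\perp\oOm = \overline{[\sd\Om,\sd\Om]}$ by \eqref{tech3}; these classes lift to nonzero constants in $H(\oOm[\![u]\!],\bi - u\sd)$, and quotienting by them yields \eqref{hc-}. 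When $\bullet = \otimes R$, the cohomology again concentrates at $u^0$ but equals $P^\perp\oOm/\sd P^\perp\oOm$, which by acyclicity and \eqref{tech3} is canonically isomorphic to $\sd(P^\perp\oOm) = \overline{[\sd\Om,\sd\Om]}$ via $\sd$ itself; quotienting by this anti-harmonic summand gives \eqref{e:bihc}. The main obstacle is the ordinary cyclic case, since here the subspace $\overline{[\sd\Om,\sd\Om]}\subset \oOm\otimes R$ embedded literally at $u^0$ consists of boundaries (negative $u$-powers supply explicit primitives), and so the quotient in \eqref{e:bihc} must be interpreted via the $\sd$-identification of $\overline{[\sd\Om,\sd\Om]}$ with the anti-harmonic residue of cohomology, in contrast to the cleaner ``constants'' interpretation available for \eqref{hc-}.
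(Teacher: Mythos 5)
Your proof is correct and follows essentially the same strategy as the paper: decompose via the Cuntz--Quillen harmonic projector, observe that $\bi$ vanishes on $P^\perp\oOm$ and use acyclicity of $(P^\perp\oOm,\sd)$ together with \eqref{tech3} to identify the anti-harmonic contribution with $\overline{[\sd\Om,\sd\Om]}$ in each of the three cases. The one place where you add welcome precision is the rescaling isomorphism $\phi$ with $\phi|_{P\oOm^n} = \tfrac{1}{n!}\Id$ intertwining $\msb-u\msB$ and $\bi-u\sd$ on the harmonic subcomplex; the paper simply states the equality of these homologies, tacitly using the invertibility of $\sN$ degree by degree, so making $\phi$ explicit is a clean way to close that gap.
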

In the case of \eqref{hc-}, the $\overline{[\sd\! \Om, \sd\!\Om]}$
includes into $H_\idot(\oOm [\![u]\!], \bi-u \sd)$ via the
aforementioned inclusion into $\oOm[\![u]\!]$ as constant terms in
$u$. As we will see in the proof of theorem, the target coincides with
the antiharmonic part of $H_\idot(\oOm [\![u]\!], \bi-u \sd)$, and
hence the inclusion is canonically split.

In \eqref{e:bihc}, the inclusion of $\overline{[\sd\! \Om, \sd\!\Om]}$
into $H_\idot(\oOm \otimes R, \bi - u\sd)$ is more complicated, but
can also be given explicitly (in the proof of the theorem, we will see
that the target is the antiharmonic part, so this inclusion is also
canonically split). The inclusion in question is defined as follows.

Observe first that the map $\osd: \oOm / \sd \oOm \to \sd \oOm$,
induced by $\sd$, is an isomorphism since the complex $(\oOm, \sd)$ is
acyclic.  Write $\osd^{-1}: \sd \oOm \iso \oOm / \sd \oOm$ for an
inverse isomorphism.  Further, let $\sdi: \sd \oOm \to \oOm$ be a lift
of $\osd^{-1}$.  In other words, we fix an arbitrary (set-theoretic)
section, $\sdi$, of the surjection $\sd: \oOm \onto \sd\oOm$.

With the above notation, the inclusion used in \eqref{e:bihc} is given
by
\[ \overline{[\sd\! \Om, \sd\!\Om]}\ \into\ H_\idot(\oOm \otimes R,
\bi - u\sd),\quad f \mapsto (\Id - u^{-1} \sdi \bi)^{-1} \sdi f.
\]
One can show that this formula makes sense and the result is
independent of the choice of a section $\sdi$, in a manner similar to
the proof of Proposition \ref{inverse} below.


In discussing Connes sequences involving these groups below, we will
not use \eqref{e:bihc} but rather the description of \eqref{hcdef}.
Isomorphism \eqref{ccper} was already proved in \cite[Theorem
4.2.2]{GScyc}; we included it in the above theorem for completeness
only.  For a related discussion of negative cyclic homology see also
\cite{GScyc}.

\begin{proof}[Proof of \eqref{hc-} and \eqref{e:bihc}]
  We first deal with \eqref{hc-}. Similarly to the arguments in the
  proof of Theorem \ref{HHHC},
\[
\oHC^-(A)=H(\oOm[\![u]\!], \ \msb-u\msB)=
H(P\oOm[\![u]\!], \ \msb-u\msB) =
H(P \oOm[\![u]\!],\ \bi - u \sd).
\]

Now, thanks to equation \eqref{tech3}, we have embeddings
$\overline{[\sd\! \Om, \sd\! \Om]} \iso P^\perp \sd\oOm \into P^\perp
\oOm \into P^\perp \oOm[\![u]\!]$.  To complete the proof, it suffices
to show that the composite embedding yields a quasi-isomorphism of
complexes
\begin{equation}\label{incl}
  \overline{[\sd\! \Om, \sd\! \Om]} \to (P^\perp \oOm[\![u]\!], \bi - u \sd),
\end{equation}
where the LHS is considered as a complex with zero differential.

To this end, we observe that the operator $\bi$ vanishes on
$P^\perp\oOm$ and the complex $(\oOm, \sd)$ is acyclic.  Therefore, we
find
\[
H_\idot(P^\perp \oOm[\![u]\!], \bi - u \sd) = H_\idot(P^\perp
\oOm[\![u]\!], -u \sd) \ \cong\ \sd\! P^\perp \oOm^\hdot.
\]
The term on the right equals $\overline{[\sd\! \Om, \sd\! \Om]}$
by \eqref{tech3}, proving that \eqref{incl}
is a quasi-isomorphism.

For the isomorphism \eqref{e:bihc}, we can apply the same argument as
above to conclude that
\[
\oHC(A)=H(P\oOm \otimes R, \ \msb-u\msB)= H(P\oOm \otimes R, \
\bi-u\sd).
\]

Then, since $\bi$ vanishes on $P^\perp\oOm$ and the complex
$(P^\perp\oOm,\sd)$ is
 acyclic, we obtain
$$H(P^\perp\oOm \otimes R,\,\sd)\ =\ 
P^\perp\oOm/u\sd(u^{-1} P^\perp\oOm)\ \cong\ P^\perp\oOm/\sd
P^\perp\oOm \ \mathop{\iso}^{\osd}\ \sd P^\perp\oOm\ \cong\
\overline{[\sd\! \Om, \sd\! \Om]}.
$$

Finally, note that the resulting isomorphism $H(P^\perp \oOm \otimes
R, \sd) \cong \overline{[\sd\! \Om, \sd\! \Om]}$ is indeed inverted by
the claimed formula $(\Id - u^{-1} \sdi \bi)^{-1} \sdi$, provided the
chosen section $\sdi$ respects harmonic decomposition (or at least
preserves antiharmonic forms). In fact, in this case, since $\bi$ is
zero on antiharmonic forms, the resulting map reduces to $\sdi =
P^{\perp} \sdi$. On the other hand, as observed above, the formula
does not actually depend on the choice of $\sdi$.
\end{proof}

\section{The Connes exact sequence} \label{s:connes}
\subsection{Connes exact sequence via noncommutative differential
  forms}
We now construct a version of Connes' exact sequence, involving
reduced homology groups, using the interpretation for cyclic and
Hochschild homology provided by Theorem \ref{HHHC}.

\begin{theorem}\label{con}
There is an exact sequence,
\begin{equation}\label{e:connes-seq}
  0 \to \oHD_n(A) \mathop{\too}^{S_2} \oHC_n(A) \mathop{\too}^B 
  \oHH_{n+1}(A) \mathop{\too}^I \oHC_{n+1}(A) \mathop{\too}^{S_1} 
  \oHD_{n-1}(A) \to 0,
\end{equation}
where the maps $S_2, B, I$, and $S_1$ are defined by the following
assignments:
\begin{equation}\label{e:connes-seq-maps}
  S_2 f\cc =  f\cc, \quad B f\cc = (\sd\!f)\cc , \quad I f\cc =  f\cc, 
  \quad S_1( f\cc) = (\osd^{-1}\si f\cc)\cc.
\end{equation}
\end{theorem}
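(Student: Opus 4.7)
The plan is to deduce exactness directly from the interpretations of $\oHH_\idot$ and $\oHC_\idot$ in Theorem~\ref{HHHC}, using only two inputs: the anti-commutation $\sd\si + \si\sd = 0$ together with acyclicity of $(\oOm,\sd)$, and the harmonic decomposition identities of Lemma~\ref{tech}, notably $P\,\overline{[\Om,\Om]} = \bi\,\oOm$ and $P^\perp\,\overline{[\Om,\Om]} = P^\perp\oOm \subseteq \overline{[\Om,\Om]}$.

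First I would check well-definedness of each map in \eqref{e:connes-seq-maps}. For $S_2$, $\sd f\cc = 0$ yields $\sd(\si f\cc) = -\si(\sd f\cc) = 0$, so $\si f\cc \in \sd\oOm$ by acyclicity. For $B$, the same anti-commutation gives $\si(\sd f\cc) = -\sd(\si f\cc) = 0$ whenever $\si f\cc \in \sd\oOm$. The map $I$ is tautological. For $S_1$, any lift $h$ of $\osd^{-1}\si f\cc$ satisfies $\sd h = \si f\cc \in \overline{[\Om,\Om]}$, so $\sd(h\cc) = 0$ in $\oDR$; independence of the choice of $h$ and of the representative $f\cc$ reduces to $(\si k\cc)\cc = 0$, valid since $\si$ takes values in $\overline{[\Om,\Om]}$.

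Exactness at $\oHD_n$, $\oHC_n$, and $\oHH_{n+1}$ is then essentially formal. Injectivity of $S_2$ is immediate. At $\oHC_n$, the condition $B[f\cc] = 0$ in $\oHH_{n+1} \cong \ker\si \subseteq \oDR^{n+1}$ forces $\sd f\cc = 0$, so $[f\cc]$ comes from $\oHD_n$ via $S_2$. At $\oHH_{n+1}$, if $f\cc \in \ker\si$ is $\sd$-exact, say $f\cc = \sd g\cc$, then $\sd(\si g\cc) = -\si(\sd g\cc) = -\si f\cc = 0$, so $\si g\cc \in \sd\oOm$ by acyclicity, placing $[g\cc] \in \oHC_n$ with $B[g\cc] = f\cc$.

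The main work — and the principal obstacle — lies at $\oHC_{n+1}$ and at the right end, where explicit preimages must be constructed using the harmonic decomposition. For surjectivity of $S_1$, given a $\sd$-closed class $g\cc \in \oDR^{n-1}$, replace $g$ by $Pg$ (permissible since $P^\perp g \in P^\perp\oOm \subseteq \overline{[\Om,\Om]}$); then $\sd(Pg) = P(\sd g) \in P\,\overline{[\Om,\Om]} = \bi\,\oOm$ by Lemma~\ref{tech}, so $\sd(Pg) = \bi f_0$ yields $[f_0\cc] \in \oHC_{n+1}$ with $S_1[f_0\cc] = (Pg)\cc = g\cc$. For exactness at $\oHC_{n+1}$, given $[f\cc] \in \ker S_1$, choose $h \in \oOm^{n-1}$ with $\sd h = \si f\cc$ and $g \in \oOm^{n-2}$ with $c := h - \sd g \in \overline{[\Om,\Om]}$; then decompose $c = \bi\tilde c + c^\perp$ with $\tilde c \in \oOm^n$ and $c^\perp \in P^\perp\oOm$. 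The key observation is that $\sd c = \si f\cc = \bi f$ lies in $P\oOm$ whereas $\sd c^\perp \in P^\perp\oOm$, forcing $\sd c^\perp = 0$ and hence $c^\perp = \sd q$ by acyclicity of $(P^\perp\oOm,\sd)$. Setting $m\cc := -\tilde c\cc \in \oDR^n$ then gives $\si(f\cc - \sd m\cc) = \sd(h - \bi\tilde c) = \sd(\sd g + c^\perp) = 0$, exhibiting $[f\cc]$ as $I[f\cc - \sd m\cc]$, completing the proof.
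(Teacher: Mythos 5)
Your proof is correct and follows essentially the same route as the paper's direct proof: well-definedness and the first three exactness claims from anti-commutation and acyclicity of $(\oOm,\sd)$, and the two ends of the sequence from harmonic decomposition via $P\,\overline{[\Om,\Om]} = \bi\,\oOm$ (Lemma~\ref{tech}). The one place your argument is organized a little differently from the paper's is the step $\im(I)=\ker(S_1)$: the paper chooses a harmonic representative of $f\cc$ at the outset and then exploits $P\sd\,\overline{[\Om,\Om]} = \sd\bi\,\oOm = \bi\sd\oOm$, whereas you keep $f\cc$ arbitrary and instead split the ``error'' $c = h - \sd g \in \overline{[\Om,\Om]}$ into $\bi\tilde c + c^\perp$, using that $\sd c = \si f\cc$ is harmonic to force $\sd c^\perp = 0$. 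Both arguments rest on the same key identity $P\,\overline{[\Om,\Om]}=\bi\,\oOm$, so this is a repackaging rather than a genuinely different method; it works equally well. (Small remarks: you can drop the appeal to acyclicity of $(P^\perp\oOm,\sd)$ to write $c^\perp = \sd q$, since $\sd c^\perp = 0$ already suffices to conclude $\sd(\sd g + c^\perp)=0$; and in your surjectivity argument for $S_1$, the replacement of $g$ by $Pg$ is the same as the paper's passage to a harmonic representative.)
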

Note here that the map $B: \oHC_n(A) \to \oHH_{n-1}(A)$ is
distinguished by its font from the differential $\msB$ on $\Om$ and
$\oOm$.  We let $S := S_2 \ccirc S_1:\ \oHC_n(A) \to
\oHC_{n-2}(A)$. This is the standard periodicity map which splices the
above exact sequence into the more familiar Connes' long exact
sequence.  The exact sequence of Theorem \ref{con} incorporates
additional information, implying that $\oHD_\idot(A)=\ker(B)$ (the
Connes-Karoubi theorem).

We now prove that the above maps are well-defined.  We will prove that
they coincide with the usual definitions in \S \ref{ss:comp-maps}, and
give a direct proof of exactness (using the right hand sides of
\eqref{hhdef}--\eqref{hcdef} as definitions) in \S \ref{ss:exact}.

The map $S_2$ is obtained from the clearly well-defined map
$\ker(\sd|_{\oDR})/\sd \oDR \to \oDR / \sd\! \oDR$; we need to check
that the image is contained in the kernel of $\si$.  Indeed, if
$(\sd\!f)\cc = 0$, then $\sd \si f\cc = -\si (\sd\!f)\cc = 0$, and
hence $\si f\cc \in \sd\! \oOm$, as required.

We now show that $B$ is well-defined. First note that, if $ f\cc \in
\oDR / \sd\! \oDR$, then $(\sd\!f)\cc \in \oDR$ makes sense
independently of the choice of representative element $f \in \oOm$.
Next, if $\si f\cc \in \sd\! \oOm$, then $\si(\sd\!f)\cc = -\sd \si
f\cc = 0$, so $(\sd\!f)\cc \in \oDR$ defines a class in Hochschild
homology.

It is immediate that $I$ is well-defined.

To prove that $S_1$ is well defined, we compute
$$
\osd^{-1} \si \sd \oDR = \osd^{-1} \sd \si \oDR = (\si \oDR +
\sd\oOm)/\sd\oOm.
$$
Since $\si \oDR \subseteq \overline{[A, \Om]}$, by \eqref{si}, the
right most term above projects to zero in $\oDR / \sd \oDR$. It
follows that the formula for $S_1$ gives a well defined map $S_1:\
\oHC(A) \to \oDR/\sd\oDR$. It remains to check that $\sd S_1(f)=0$ for
every $f \in \oDR$ such that $\si f\in\sd\oOm$.  Indeed, $\sd
S_1(f)=\sd (\osd^{-1} \si\!f)\cc = (\si\!f)\cc =0$, since $\si \oDR
\subseteq \overline{[A, \Om]}$. \qed

\subsection{Comparison of maps in \eqref{e:connes-seq} with the usual
  definitions} \label{ss:comp-maps} We now show that the maps in
\eqref{e:connes-seq} coincide with the usual ones up to nonzero
integer multiples depending on degree (i.e., a nonvanishing expression
involving the aforementioned degree operator $\sN$). We will make use
of the fact that every homology class can be represented by a harmonic
cycle.

Here and later on we will make use of the notation $f_0$ for the
constant term in $u$ of an element $f \in \oOm\dpu$ (or $f \in \oOm
\dspu$).

\subsubsection{The map $B$}
If $f$ is a harmonic cycle with respect to the usual definition of
Hochschild or cyclic homology, then \eqref{e:iP} implies that $\msB f
= \sN \sd\! f$. Since the usual $B$ is essentially defined by applying
$\msB$ and our version is defined by applying $\sd$, this shows that
the difference is post-composition with $\sN$, which is nonvanishing
since $\sd$ has de Rham degree $1$.

\subsubsection{The map $I$}
The map $I$ is defined by leaving the harmonic cycle representing a
homology class unchanged, and mapping it from one complex to another
(throwing out the coefficients of $u^{> 0}$ in the case of $I$ acting
on the usual definition of cyclic homology).  Since the isomorphisms
from ordinary versions to our versions of Hochschild and cyclic
homology also leave the harmonic cycles unchanged, we obtain the
desired compatibility.

\subsubsection{The maps $S_1$ and $S_2$}
Let us consider $S_1$.  Suppose that $f \in \oOm(\!(u)\!)/u
\oOm[\![u]\!]$ is a harmonic cycle of degree $n+1$, i.e., $|f|=n+1$
under the total degree in which $|\oOm^j| = j$ and $|u|=-2$.  The
usual periodicity operation is $f \mapsto u \cdot f$.  We need to show
that $\si (f_0)\cc = \sd((uf)_0)\cc$.  Since $(\bi - u \sd) f = 0$,
this is immediate.

Regarding the map $S_2$, we only need to observe that the usual
formula for $S_2$ is uniquely determined by the fact that the part
mapping to a coefficient of $u^0$ is the obvious map (leaving harmonic
cycles unchanged).  (To obtain the coefficients of $u^{<0}$, we can
use the usual periodicity map $S = S_2 \ccirc S_1$, which is
multiplication by $u$. Then, our construction yields the explicit
formula ~${\osd^{-1}\si}\ $\ for this operation, by the previous
paragraph.)

\subsection{Direct proof of exactness}\label{ss:exact}
Since we showed that the maps in \eqref{e:connes-seq} are compatible
with the isomorphisms between the new versions of cyclic and
Hochschild homology and the usual ones, exactness follows from the
usual construction of \eqref{e:connes-seq} by a commutative diagram of
short exact sequences of the complexes computing Hochschild and cyclic
homologies (cf.~\cite{L}).

Nonetheless, here we will prove exactness directly using our formulas
\eqref{hhdef}--\eqref{hcdef}.

\underline{Injectivity of $S_2$:} By definition, $\oHD(A) \subseteq
\oDR/\sd\! \oDR$.

\underline{$\im(S_2) = \ker(B)$:} By definition, the cycles in
$\ker(B)$ are represented by $ f\cc \in \oDR^n$ such that $\sd f\cc =
0$ and $\si f\cc \in \sd\! \oOm$. The second condition is equivalent
to $\sd \si f\cc = 0$, which follows from the first condition. We
deduce that $\ker(B) = \oHD_n(A)$, as desired.

\underline{$\im(B) = \ker(I)$:} The cycles in $\ker(I)$ are those
elements $ f\cc \in \oDR^n$ such that $\si f\cc = 0$ and $ f\cc \in
\sd\! \oDR^{n-1}$.  Given the second condition, we can assume $f =
\sd\!g$. Then $\si (\sd\!g)\cc = \si \sd g\cc = 0$ holds if and only
if $\si g\cc \in \sd\! \oOm$, i.e., $ g\cc \in \oHC_{n-1}(A)$.
Conversely, it is clear that $\im(B) \subseteq \ker(I)$. We conclude
that $\ker(I) = \im(B)$.

\underline{$\im(I) = \ker(S_1)$:} Every cycle in $\ker(S_1)$ is
represented by a harmonic element $ f\cc \in \oDR$ such that $\si f\cc
\in \sd \overline{[\Om,\Om]}$. By \eqref{tech1}, $P
\overline{[\Om,\Om]}=\bi\oom$. We deduce that
$$\si f\cc\ =\ P\si f\cc\ \in\  P\sd 
\overline{[\Om,\Om]}\ =\ \sd P \overline{[\Om,\Om]}\ = \ \sd \bi\oom\
=\ \bi\sd\!\oOm.
$$
It follows that one can find $g,h\in\oom$ such that $\si( g\cc )=0$
and $ f\cc= g\cc +\sd h\cc $. Thus, in $\oHC(A)$, we obtain $f\cc
\mmod \sd\oDR =I( g\cc )$, as desired.

 \underline{Surjectivity of $S_1$:} Every class in $\oHD(A)$ is
 represented by a harmonic element $f\in\oom$ such that $\sd f\cc =
 0$.  This means that $\sd\! f \in P\overline{[\Om,\Om]}=\bi\oom$,
 thanks to \eqref{tech1}. Thus, there exists $g\in\oom$ such that, in
 $\oHD(A)$, we obtain $ f\cc=\osd^{-1}\bi(g)=S_1( g\cc ),$ as desired.

 \subsection{Connes' exact sequences for periodic and negative cyclic
   homology}
Now, using Theorem \ref{t:perneg} (for periodic and negative cyclic
homology) as well as Theorem \ref{HHHC} (for Hochschild and ordinary
cyclic homology), we construct versions of Connes' exact sequences
involving periodic and negative cyclic homology.

It will be convenient below to introduce, for each $n=0,1,\ldots,$ the
following space of degree $n$ elements in $\oOm[\![u]\!]$:
\begin{align}
  \sZ_n(A) &:=\{f\in (\oOm[\![u]\!])_n\mid (\bi-u \sd)f\quad\text{is
    independent of $u$}\}\label{sz}\\
  &\,=\{f=f^n+u f^{n+2}+u^2 f^{n+4}+\ldots\enspace\big|\enspace
  f^j\in\oOm^j,\ \sd\!f^j=\bi(f^{j+2}),\enspace \forall
  j=n,n+2,\ldots\}.  \nonumber
\end{align}

We observe that, for every element $f=f^n+u f^{n+2}+u^2 f^{n+4}+\ldots
\in \sZ_n(A)$, in $\oDR^{n+1}$, one has
$\sd(f^n)\cc=(\bi(f^{n+2}))\cc=0$.  Therefore, the assignment sending
$f=f^n+u f^{n+2}+u^2 f^{n+4}+\ldots$ to $(f^n)\cc\in \oDR^n$ gives a
well defined map $I':\ \sZ(A)\to \{\alpha\in \oDR \mid
\sd\!\alpha=0\}$.  In particular, one has an induced map
$$I'_{\HD}:\ \sZ(A)\ \to\ \oHD(A)=\{\alpha\in \oDR \mid \sd\!\alpha=0\}
/\sd\oDR.
$$

Next, it is clear from the definition that, for $f=f^n+u f^{n+2}+u^2
f^{n+4}+\ldots \in \sZ_n(A)$, the condition $(\bi-u \sd)f=0$ is
equivalent to $\bi(f^n)=0$. Also, since $\bi\oOm$ projects to zero in
$\oDR$, the map $I'$ sends the subspace $(\bi-u
\sd)\oOm[\![u]\!]\subseteq \sZ(A)$ of boundaries to zero.  Hence,
using \eqref{hhdef}, we deduce that the map $I'$ gives a well defined
morphism
\[I'_{\HH}:\ H_\idot(\oOm[\![u]\!],\  \bi-u \sd)\
\to\ \oHH_\idot(A).
\]

We define
\begin{align}
  \oHD'(A)\  &:=\ \im[I'_{\HD}:\ \sZ(A)\to \oHD(A)]\nonumber\\
  \oHH'(A)\ &:=\ \im[I'_{\HH}:\ H_\idot(\oOm[\![u]\!],\ \bi-u \sd)\
  \to\ \oHH(A)].\label{IHH}
\end{align}

\begin{theorem}  There is 
 a commutative diagram with exact rows,
\begin{equation}\label{e:bigdiag}
  \xymatrix{
    0\, \ \to\ \,\oHH_n'\ \,\ar@<4mm>[d]^{J} \ar[r] &
    \ \oHH_n\  \ar[r]^{B^-} \ar[d]^I & \ \oHC^-_{n+1}\  \ar[r]^{S^-}
    \ar@<-1mm>@{=}[d] & \ \oHC^-_{n-1}\  \ar[r]^{I^-} \ar[d]!<25pt,0cm>^{p^-} &
    \ \, \oHH'_{n-1}\ \, \to\ \, 0  \ar@<-6mm>[d]^{J}  \\
    0 \ \to\ \oHD'_n\  \ar@<4mm>@{^{(}->}[d] \ar[r]^<>(0.5){p_2} & 
    \ \oHC_{n}\  \ar[r]^{B'} \ar@{=}[d] &\ 
    \oHC^-_{n+1}\  \ar[r]^-{p^-}
    \ar@<-1mm>[d]^{I^-} &\  \oHC^\per_{n+1} \ {\displaystyle \mathop{=}^{\cdot
        u}} \ \oHC^\per_{n-1}\   \ar[r]^-{p_1} & \ \oHD'_{n-1} \ \to\ 0
    \ar@<-6mm>@{^{(}->}[d]  \\
    0\, \ \to\ \, \oHD_n\  \ar[r]^<>(0.5){S_2} & 
    \ \oHC_n \ \ar[r]^-B & \ \oHH_{n+1} \ \ar[r]^I & \ \oHC_{n+1} \ \ar[r]^<>(0.5){S_1}
    \ar@{<-}[u]!<-25pt,0cm>^p & \ \oHD_{n-1} \, \to\,  0
  }
\end{equation}
\end{theorem}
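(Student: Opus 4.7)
The plan is to obtain the top and middle rows of \eqref{e:bigdiag} as the long exact homology sequences of two natural short exact sequences of $(\bi - u\sd)$-complexes, and to identify the flanking entries $\oHH'$ and $\oHD'$ with explicit kernels and images. The bottom row is already Theorem \ref{con}, so nothing further is required there.

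For the top row, I would start from the short exact sequence $0 \to u\oOm[\![u]\!] \to \oOm[\![u]\!] \to \oOm \to 0$, equipped with the differential $\bi - u\sd$ on the first two terms and with the induced differential $\bi$ on the quotient $\oOm$. Combining Theorems \ref{HHHC} and \ref{t:perneg} with the identification $H_n(u\oOm[\![u]\!], \bi - u\sd) \cong \oHC^-_{n+2}$ via multiplication by $u$, the associated long exact sequence becomes
$\cdots \to \oHC^-_{n+2} \xrightarrow{S^-} \oHC^-_n \xrightarrow{I^-} \oHH_n \xrightarrow{B^-} \oHC^-_{n+1} \to \cdots,$
in which $I^-$ is induced by the quotient map $\sum g_k u^k \mapsto g_0$ and thus coincides with the map $I'_\HH$ of \eqref{IHH}. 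Since $\oHH'_n = \im(I'_\HH) = \im(I^-) = \ker(B^-)$, the $5$-term truncation of this sequence is exactly the top row.

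For the middle row, I would apply the analogous procedure to $0 \to u\oOm[\![u]\!] \to \oOm\bbu \to \oOm \otimes R \to 0$; the identification $\oOm\bbu/u\oOm[\![u]\!] \cong \oOm \otimes R$ is immediate from $R = \bk\bbu/u\bk[\![u]\!]$. The resulting long exact sequence reads
$\cdots \to \oHC^-_{n+2} \to \oHC^\per_n \to \oHC_n \xrightarrow{B'} \oHC^-_{n+1} \to \oHC^\per_{n-1} \to \cdots,$
and the periodicity isomorphism $\cdot u : \oHC^\per_{n+1} \iso \oHC^\per_{n-1}$ recasts the connecting map as $p^- : \oHC^-_{n+1} \to \oHC^\per_{n+1}$. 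The essential computation will be the identification $\oHD'_n = \ker(B')$: for a harmonic cycle $\sum_{k \geq 0} f_k u^{-k}$ representing a class of $\oHC_n$, a direct calculation in $\oOm\bbu$ yields $B'[\sum f_k u^{-k}] = -\sd f_0 \in \oHC^-_{n+1}$, which is a boundary precisely when $\sd f_0 = -\bi h_0$ for some sequence $(h_0, h_1, h_2, \ldots)$ satisfying $\bi h_{k+1} = \sd h_k$; after a sign change, this data exhibits $(f_0, -h_0, -h_1, \ldots)$ as an element of $\sZ_n$, certifying $(f_0)\cc \in \oHD'_n$ via \eqref{IHH}. A mirror argument identifies $\im(p_1)$ with $\oHD'_{n-1}$, producing surjectivity at the right end.

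Commutativity of each square is then verified by tracking harmonic representatives through the arrows: the identities labelled $=$ are tautological, and the remaining vertical arrows $J$, $I$, $p^-$, $I^-$, $p$ all arise from the standard inclusions $\oOm \hookrightarrow \oOm[\![u]\!] \hookrightarrow \oOm\bbu$ and the projection $\oOm\bbu \onto \oOm \otimes R$, so each square commutes by naturality of the connecting homomorphism of a short exact sequence of complexes. The main obstacle will be verifying the identification $\ker(B') = p_2(\oHD'_n)$ in full, not merely the obvious containment: extracting the chain $(h_0, h_1, \ldots)$ inductively from the boundary condition $\sd f_0 = -\bi h_0$ requires the harmonic decomposition, the acyclicity of $(\oOm, \sd)$, and the identities of Lemma \ref{tech} to alternately produce $\sd$- and $\bi$-preimages at each stage.
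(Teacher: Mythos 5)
Your overall strategy — derive the top and middle rows as the long exact homology sequences of short exact sequences of $(\bi,u\sd)$-bicomplexes, and then truncate by identifying $\oHH'$ and $\oHD'$ with explicit images and kernels — is essentially the ``usual proof'' that the authors allude to later in the paper, and is \emph{not} the route they take here. (They instead verify each $\ker=\im$ assertion by direct element-chasing with harmonic representatives, using the explicit descriptions from Theorems~\ref{HHHC} and~\ref{t:perneg} and Lemma~\ref{tech}.) Your approach can be made to work, but as written it has a real gap that you must close.

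The gap: you treat $H_n(\oOm,\bi)$ and $H_n(\oOm[\![u]\!],\bi-u\sd)$ as if they were $\oHH_n$ and $\oHC^-_n$. They are not. The operator $\bi$ vanishes on the antiharmonic part $P^\perp\oOm$, so $H(\oOm,\bi)=H(P\oOm,\bi)\oplus P^\perp\oOm$, which is far larger than $\oHH$; and Theorem~\ref{t:perneg}\eqref{hc-}, which you cite, says precisely that $H(\oOm[\![u]\!],\bi-u\sd)\cong\oHC^-\oplus\overline{[\sd\Om,\sd\Om]}$ (canonically split, with the second summand being the antiharmonic part), \emph{not} $\oHC^-$ alone. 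Consequently the long exact sequence of your proposed short exact sequence $0\to u\oOm[\![u]\!]\to\oOm[\![u]\!]\to\oOm\to0$ does not ``become'' the five-term row you wrote; it is the direct sum of that sequence and a nontrivial antiharmonic long exact sequence involving $P^\perp\oOm$ and $\overline{[\sd\Om,\sd\Om]}$. The same objection applies to your middle-row sequence $0\to u\oOm[\![u]\!]\to\oOm\bbu\to\oOm\otimes R\to0$, since $H(\oOm\otimes R,\bi-u\sd)$ also carries an $\overline{[\sd\Om,\sd\Om]}$ summand by \eqref{e:bihc}.

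The fix is to replace $\oOm$ by its harmonic part $P\oOm$ everywhere: work with $0\to uP\oOm[\![u]\!]\to P\oOm[\![u]\!]\to P\oOm\to0$ and $0\to uP\oOm[\![u]\!]\to P\oOm\bbu\to P\oOm\otimes R\to0$. On harmonic forms one does have $H(P\oOm,\bi)=H(P\oOm,\msb)=\oHH$, $H(P\oOm[\![u]\!],\bi-u\sd)=\oHC^-$, $H(P\oOm\bbu,\bi-u\sd)=\oHC^\per$, and $H(P\oOm\otimes R,\bi-u\sd)=\oHC$ (all established in the proofs of Theorems~\ref{HHHC} and~\ref{t:perneg}), so the long exact sequences then do coincide with the rows of \eqref{e:bigdiag}. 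You must also record the observation that the map $I'_\HH$ of \eqref{IHH} kills the antiharmonic summand $\overline{[\sd\Om,\sd\Om]}$ (because it lands in $\overline{[\Om,\Om]}$, hence in the kernel of $\oOm\onto\oDR$), so that $\oHH'=\im(I'_\HH)$ as defined in the paper agrees with the image of your connecting-sequence map out of $\oHC^-=H(P\oOm[\![u]\!],\bi-u\sd)$. Once that is repaired, your identification of $\oHD'$ with $\ker(B')$ via the inductive construction of the chain $(h_k)$ is correct and hits the same technical points (acyclicity of $(\oOm,\sd)$, \eqref{tech1}, \eqref{tech3}) that the paper's direct proof of that step uses.
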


The maps in the diagram are defined by the assignments
\begin{gather}
  p_2 f\cc = f\cc, \quad B^-( f\cc) = \sd\!f, \quad B'( f\cc) =
  \sd\!f,
  \quad S^-(f) = uf, \quad p^-(f)=f,\nonumber \\
  I^-(f) = (f_0)\cc , \quad p_1(f) = (f_0)\cc , \quad J f\cc = f\cc,
  \quad p = p_2 \ccirc p_1.  \nonumber\end{gather}

Let us explain why all the maps above are well-defined.  For $B^-$, it
is immediate that $B^- = B' I$, so it is enough to show that $B'$ is
well-defined.  To this end, let $f \in \oOm$ satisfy $\si f\cc \in
\sd\! \oOm$. Then, $(\bi - u \sd) \sd\!f = 0$, so that $\sd\!f$ indeed
defines a class in negative cyclic homology.  We then need to show
that $\sd \overline{[\Om,\Om]} \subseteq (\bi - u \sd) \oOm[\![u]\!] +
\overline{[\sd\!\Om,\sd\!\Om]}$.  To prove this, we write
$\overline{[\Om,\Om]}=P\overline{[\Om,\Om]}+P^\perp\overline{[\Om,\Om]}$.
We know that $P\overline{[\Om,\Om]}=\bi\oom$, by \eqref{tech1}.
Further, we compute
$$P^\perp\overline{[\Om,\Om]}\ \mathop{=}^{\eqref{bracket1}}\ 
\ P^\perp\msb\oom+P^\perp(\Id-\kap)\oom\ \mathop{=}^{\eqref{tech4}}\
(\Id-\kap)\msb\oom+(\Id-\kap)P^\perp\oom\ \sset \ (\Id-\kap)\oom \ =\
\overline{[\sd\!A, \Om]}.
$$
Thus, we have proved the inclusion we need:
$\overline{[\Om,\Om]}\sset\bi\oom +\overline{[\sd\Om, \Om]}$.

It is immediate that $S^-$ is well-defined.  To show that $p^-$ is
well-defined, we only need to show that the space
$\overline{[\sd\!\Om,\sd\!\Om]}$ is contained in the image of the
differential $\bi-u\sd$, on $\oOm\bbu$.  This follows from the fact
that $\bi \overline{[\Om, \sd\! \Om]} = 0$.

For the map $I^-$, we observe that this map is induced by the map
\eqref{IHH}. This makes sense since the map $I'$ annihilates the space
$\overline{[\sd\Om,\,\sd\Om]}\subseteq \sZ(A)$.

Finally, it is immediate that $p_1$ and $J$ are well-defined, and that
the diagram commutes.

\subsection{Comparison of the maps in \eqref{e:bigdiag} with the
usual definitions}
The fact that $B^-$ and $B'$ coincide with the usual definitions is
the same as the reason we gave for $B$ in \S \ref{ss:comp-maps}: these
maps take harmonic representatives $f$ to $\sd\!f$ under our
definitions, and to $\msB f$ in the usual definitions, which differ by
a scalar depending on degree. Similarly, for $I^-$ and $J$, these maps
leave harmonic representatives unchanged.

The fact that the periodicity map $S^-$ is compatible with our
isomorphism is obvious, since it is defined by multiplication by $u$
in both cases.

Finally, the maps $p$ and $p^-$, like the case of the maps of type
$I$, send harmonic cycles to the classes represented by the same
harmonic cycles (in the case of $p$, taking the coefficients of
$u^{\leq 0}$ or ~$u^0$).
\subsection{Direct proof of exactness in diagram \eqref{e:bigdiag}}
\subsubsection{The middle row of \eqref{e:bigdiag}}
\underline{Injectivity of $p_2$:} This is clear from construction (or
from injectivity of $S_2$).  \pagebreak[3]

\underline{$\im(p_2) = \ker(B')$:} The cycles in $\ker(B')$ are
represented by $ f\cc \in \oHC(A)$ such that $\sd\!f = (\bi - u \sd)g
+ h$ for some $g \in \oOm[\![u]\!]$ and some $h \in
\overline{[\sd\!\Om,\sd\!\Om]}$. Restricting to harmonic $f$ and $g$,
by \eqref{tech3} we can also set $h=0$. Then, $(\bi - u \sd)(f + ug) =
\si f\cc \in \sd\! \oOm$.  So $\ker(B') \subseteq \im(p_2)$. For the
opposite inclusion, if $f \in \sZ(A)$, then $(\bi - u\sd) f =
\si(f_0)\cc \in \sd \oOm$.  Thus, $\sd\!f_0 = (\bi - u \sd) u^{-1}
(f_0-f)$, so $(f_0)\cc \in \ker(B')$.

\underline{$\im(B') = \ker(p^-)$:} The elements in $\ker(p^-)$ are
represented by harmonic elements $f \in \oOm[\![u]\!]$ such that $f =
(\bi - u \sd) g$ where $g \in \oOm(\!(u)\!)$ is harmonic. Replacing
$f$ with $f - (\bi - u \sd) \sum_{j \geq 0} g_j u^j$, we can assume
that $g_j = 0$ for $j \geq 0$.  Then, $f = f_0 = \sd\! g_{-1}$, and
$\si (g_{-1})\cc \in \sd\! \oOm$.  Hence, $(g_{-1})\cc \in \oHC(A)$
and $f = B' (g_{-1})\cc$. Therefore, $\ker(p^-) \subseteq
\im(B')$. For the opposite inclusion, suppose that $f \in
\im(B')$. That is, there exists $h \in \oOm$ satisfying $f = \sd\! h$
and $\si h\cc \in \sd\! \oOm$. Now, we can set $g_{-1} := h$, and
inductively we can choose $g_{-j}$ for $j > 1$ such that $\sd g_{-j} =
\bi g_{1-j}$ for all $j > 1$.  We deduce that $-(\bi - u \sd)g = \sd\!
h = f$.  Therefore, $\im(B') \subseteq \ker(p^-)$.

\underline{$u\im(p^-) = \ker(p_1)$:} Elements of $\ker(p_1)$ are $f
\in \oOm(\!(u)\!)$ such that $f_0 \in \sd\! \oOm +
\overline{[\Om,\Om]}$ and $(\bi - u \sd) f = 0$. We claim first that
$f_{j} \in \sd\! \oOm + \overline{[\Om,\Om]}$ for all $j \leq 0$.
Indeed, $\sd\! f_j = \si(f_{j+1})\cc$ for $j < 0$.  So, if $f_{j+1}
\in \sd\! \oOm + \overline{[\Om, \Om]}$, then $\sd\! f_j =
\si(f_{j+1})\cc \in \bi \sd\! \oOm = \sd \bi \oOm \subseteq \sd
\overline{[\Om,\Om]}$. By acyclicity of $(\oOm, \sd)$, we conclude
that $f_j \in \sd\! \oOm + \overline{[\Om, \Om]}$ as well, so by
induction, we conclude the desired result.

Suppose further that $f$ is harmonic.  Then, $f_0 \in \sd\! P \oOm +
\bi P \oOm$ since $P \overline{[\Om,\Om]}=\bi P \oOm$, by
\eqref{tech1}. Writing $f_0 = \sd\! g + \si h\cc $, we conclude that
$f' := f + (\bi - u \sd)(u^{-1} g - h)$ satisfies $f'_0 = 0$, and
$f'_j \in \sd\! \oOm + \overline{[\Om,\Om]}$ for $j < 0$. Applying the
same reasoning to $uf'$, we eventually conclude that $f$ is homologous
to a harmonic cycle $\tilde f \in u P\oOm[\![u]\!]$.  Therefore, $f
\in u \cdot \im(p^-)$.

The converse containment, $u \cdot \im(p^-) \subseteq \ker(p_1)$, is
obvious.

\underline{Surjectivity of $p_1$:} Suppose that $f \in \sZ(A)$.  Then
$(\bi - u\sd)f = \si(f_0)\cc \in \sd \oOm$.  As observed above,
setting $g_0 = f_0$, we can choose $g_{-j}$ for $j \geq 1$ such that
$\sd g_{-j} = \bi g_{1-j}$ for all $j \geq 1$.  Set $h := g +
(f-f_0)$.  Then, $(\bi - u \sd)h = -u \sd\!f_0 - \si(f_0)\cc + (u
\sd\!f_0 + \si (f_0)\cc ) = 0$.  Since $(h_0)\cc = (f_0)\cc $, we
conclude that $(f_0)\cc = p_1(h)$ is in the image of $p_1$.

\subsubsection{The top row of \eqref{e:bigdiag}}  

The first map is tautologically injective.

\underline{$\ker(B^-) = \oHH'(A)$:} Assume that $f$ is harmonic and
$\si f\cc=0$, and suppose that $ f\cc \in \oHH(A)$ satisfies $\sd\!f =
(\bi - u \sd)g$ for some harmonic $g \in \oOm[\![u]\!]$.  Then, $(\bi
- u \sd)(f - ug) = 0$, so $ f\cc \in \oHH'_n(A)$. Thus $\ker(B^-)
\subseteq \oHH'(A)$.  Conversely, if $f \in \ker(\bi - u \sd)
\subseteq \oOm[\![u]\!]$, then $\sd\!f_0 = (\bi - u \sd) (f - f_0)$,
so $(f_0)\cc \in \ker(B^-)$.

\underline{$\im(B^-) = \ker(S^-)$:} The kernel of $S^-$ is represented
by elements of the form $u^{-1} (\bi - u \sd)g$, where $g \in P
\oOm[\![u]\!]$ satisfies $\si(g_0)\cc = 0$. Up to coboundary, this is
the same as elements $\sd g_0$ such that $\si(g_0)\cc =0$, i.e., the
image of $B^-$.  Conversely, if $\si(g_0)\cc = 0$, then $B^-(g_0)\cc =
\sd g_0 = -u^{-1} (\bi - u \sd)(g_0)$ is in the kernel of $S^-$.

\underline{$\im(S^-) = \ker(I^-)$:} The kernel of $I^-$ is represented
by elements $f \in \oOm[\![u]\!]$ such that $f_0 \in
\overline{[\Om,\Om]}$.  If we assume that $f$ is harmonic, then $f_0
\in \msb P \oOm = \bi P \oOm$, see \eqref{tech1}. Write $f_0 = \si
g\cc $ for some $g$. Then, $f - (\bi - u \sd)g \in u
\oOm[\![u]\!]$. Hence, the class of $f$ is in the image of $S^-$.
Conversely, it is clear that the image of $S^-$ maps to zero under
$I^-$.

\underline{Surjectivity of $I^-$:} This is immediate from the
definition of $\oHH'(A)$.

\section{Extended Karoubi-de Rham complex
and cyclic homology}\label{ss:eqcoh}
In this section, we will consider, following \cite{GScyc}, an
``extended'' version, $\oDR_t A$, of the noncommutative de Rham
complex.  One of the reasons for introducing such an extended version
is that it maps naturally to the {\em equivariant} (as opposed to the
ordinary) de Rham complex of representation varieties of the algebra
in question, see \S\ref{s:eqcoh}.

Another important feature of $\oDR_t$ is that it comes equipped with
natural anti-commuting differentials $\sd$ and $\si_t$.  We will show
that the homology of the complex $(\oDR_t A \otimes R,\ \si_t-u\sd)$
captures both the cyclic homology $\oHC(A)$ and the Karoubi-de Rham
homology $\oHD(A)$ at the same time.

\subsection{Extended noncommutative de Rham complex}
Let $B *_\bk C$ denote the free product of unital algebras $B$ and $C$
over $\bk$. Define
\begin{equation}\label{omt}
  \Om_t = \Om_t A := \Om (A *_\bk \bk[t]) / (\sd\! t) \cong \Om A *_\bk \bk[t],
\end{equation}
where $(\sd\! t)$ denotes the two-sided ideal generated by the element
$\sd\! t$.  For all $p$ and $q$, let $\Om_t^{p,q} \subseteq \Om_t$
denote the subspace of de Rham degree $p$ and degree $q$ in $t$ (note
that this notation differs from that of \cite{GScyc}).  The
isomorphism on the right of \eqref{omt} respects the bigradings.

Next, we define the commutator quotient
$$
\DR_t = \DR_t A := \Om_t  / [\Om_t , \Om_t ],
\qquad \DR_t  =\bplus_{p,q}\ \DR_t^{p,q} 
\quad\text{where}\quad \DR_t^{p,q}=(\Om_t^{p,q})\cc.
$$

As observed in \cite{GScyc}, for each $q\geq 1$, there is a canonical
isomorphism $\DR_t^{\hdot,q}\cong \Om^{\otimes q}/(\bZ/q)$. This
yields a direct sum decomposition
\begin{equation}\label{oplus}
\DR_t =\bplus_{q\geq 0}\ \DR_t^{\hdot,q}\ \cong\
 \DR\  \oplus\ 
\Om \  \oplus\  \Om ^{\otimes 2}/(\bZ/2)\  \oplus \cdots.
\end{equation}
In particular, there is a canonical isomorphism $\iota:\
\DR_t^{\hdot,1}=(t \Om^\hdot)\cc\ \iso\ \Om^\hdot $.

One has a natural differential $\sd:\ \Om_t^{\hdot,q}\to
\Om_t^{\hdot+1,q} $ that restricts to the ordinary Karoubi-de Rham
differential on $\Om $, and kills $t$. One also has a derivation
$\ddt:\ \Om_t^{p,\hdot}\to \DR_t^{p,\hdot-1}$ that kills $\Om\sset
\Om_t$ and sends $t$ to $1$.  These maps descend to $\DR_t$.

There are also reduced versions of all the above.  In particular,
$\oOm_t=\oOm_t A := \Om_t / \bk[t]$ and $\oDR_t=\oDR_tA := \oOm_t
/\overline{[\Om_t, \Om_t]} = \DR_t /\bk[t]$.  Further, we put
$\oDR^+_t :=\oplus_{q>0}\ \oDR^{\hdot,q}= (t \oOm_t)\cc \subseteq
\oDR_t$.

\begin{lemma}\label{qis0}
The complex $(\oDR^+_t,\,\sd)$ is acyclic.
\end{lemma}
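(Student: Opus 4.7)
The plan is to use the decomposition \eqref{oplus}, which gives $\oDR^+_t = \bigoplus_{q \geq 1} \oDR_t^{\hdot, q}$, together with the isomorphism $\DR_t^{\hdot, q} \cong \Om^{\otimes q}/(\bZ/q)$ recalled just above, so that I can treat each weight $q \geq 1$ separately. Under the latter isomorphism the element $t^q \in \bk[t]$ corresponds to $1^{\otimes q} \in \Om^{\otimes q}$ (writing $1$ for the unit of $\Om^0 = A$), so quotienting by $\bk[t]$ gives
\[
\oDR_t^{\hdot,q}\ \cong\ \bigl(\Om^{\otimes q}\,\big/\,\bk \cdot 1^{\otimes q}\bigr)\,\big/\,(\bZ/q),
\]
where I have used that $\bZ/q$ acts trivially on $\bk \cdot 1^{\otimes q}$, so the two quotient operations commute. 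Since $\sd$ kills $t$ and restricts to the Karoubi--de Rham differential on each $\Om$-factor, the induced differential on $\Om^{\otimes q}$ is the standard tensor product differential $\sd_{\mathrm{tot}}$.

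Next, I would combine the splitting $\Om = \bk \oplus \oOm$ with the acyclicity of $(\oOm, \sd)$ (noted just after \eqref{aaa}) to produce a contracting homotopy $H: \Om^\hdot \to \Om^{\hdot-1}$ with $\sd H + H\sd = \Id - e$, where $e: \Om \to \bk \cdot 1 \sset \Om^0$ is the projection, and $eH = He = 0$. From $H$ one constructs in the standard Koszul-sign fashion an explicit contracting homotopy on $(\Om^{\otimes q}, \sd_{\mathrm{tot}})$ onto the subcomplex $\bk \cdot 1^{\otimes q}$, showing that $(\Om^{\otimes q}/\bk \cdot 1^{\otimes q}, \sd_{\mathrm{tot}})$ is contractible, hence acyclic.

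Finally, because $\bk \supseteq \bQ$, the order $q$ of $\bZ/q$ is invertible in $\bk$, so the averaging projector $\frac{1}{q}\sum_\sigma \sigma$ exhibits the functor of $(\bZ/q)$-coinvariants as a direct summand of the identity on $\bk[\bZ/q]$-modules. Acyclicity is preserved by direct summands, so $\oDR_t^{\hdot,q}$ is acyclic for every $q \geq 1$, and assembling over $q$ proves the lemma. The main obstacle I anticipate is the bookkeeping in the first step---checking the claimed isomorphism, pinning down that $t^q$ corresponds to $1^{\otimes q}$, and confirming the compatibility of $\sd$ with $\sd_{\mathrm{tot}}$ and its signs; the contracting homotopy and the coinvariants step are then formal consequences of the acyclicity of $(\oOm,\sd)$ and of working over a $\bQ$-algebra.
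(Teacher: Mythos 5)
Your argument is correct and is essentially the one in the paper: both rest on the decomposition \eqref{oplus} of $\DR_t^+$ into the factors $\Om^{\otimes q}/(\bZ/q)$, on the acyclicity of $(\oOm,\sd)$ (equivalently $H(\Om,\sd)\cong\bk$), and on exactness of $\bZ/q$-coinvariants over a $\bQ$-algebra. The only difference in packaging is that you reduce modulo $\bk\cdot 1^{\otimes q}$ inside each weight factor and exhibit contractibility via an explicit homotopy before passing to coinvariants, whereas the paper first computes $H(\DR_t^+,\sd)=\bigoplus_{q\geq1}\bk\cdot(t^q)\cc$ and then invokes the long exact sequence associated to $0\to t\bk[t]\to\DR^+_t\to\oDR^+_t\to 0$.
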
\label{l:drt-acy}

\begin{proof} 
  From \eqref{oplus}, we see that $\DR^+_t=\Om \, \oplus\, \Om
  ^{\otimes 2}/(\bZ/2)\, \oplus\, \cdots$.  Now, $H(\Om, \sd) \cong
  \bk$, spanned by the natural inclusion $\bk \subseteq \Om$.  Taking
  coinvariants with respect to the cyclic group $\bZ/q$ is exact for
  all $q \geq 1$, thanks to our assumption that $\bk \supseteq \bQ$.
  Hence, $H(\DR^+_t,\sd) = \bigoplus_{q \geq 1} \bk^{\otimes q}$,
  spanned by the inclusions $\bk^{\otimes q} \cong \bk \cdot (t^q)\cc
  \subseteq \DR^+_t$.  The result now follows from the long exact
  sequence on homology associated to the short exact sequence $0 \to t
  \cdot \bk[t] \to \DR^+_t \to \oDR^+_t \to 0$.
\end{proof}
As an immediate consequence of Lemma \ref{qis0} we obtain

\begin{corollary}\label{qis1} The natural  projection 
  $\ \oDR_t \onto \oDR_t /\oDR^+_t=\oDR$, modulo $t$, induces an
  isomorphism $H(\oDR_t,\,\sd) \iso H(\oDR,\,\sd)$, of cohomology.
\end{corollary}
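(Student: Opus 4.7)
The plan is to realize the projection $\oDR_t \onto \oDR$ as the quotient map in a short exact sequence of complexes and then apply the long exact sequence on homology, using Lemma \ref{qis0} as a black box.

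First I would verify that $\sd$ respects the decomposition \eqref{oplus} by $t$-degree. Since the differential $\sd$ on $\Om_t$ kills $t$ (and $\sd\! t = 0$ by definition of $\Om_t$), it preserves each summand $\oDR_t^{\hdot,q}$; in particular, both $\oDR_t^+ = \bplus_{q \geq 1} \oDR_t^{\hdot,q}$ and the quotient $\oDR_t/\oDR_t^+ \cong \oDR$ inherit the structure of subcomplex and quotient complex, respectively, of $(\oDR_t, \sd)$.

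Next I would write the short exact sequence of complexes
$$0 \to (\oDR_t^+, \sd) \to (\oDR_t, \sd) \to (\oDR, \sd) \to 0,$$
which is the one already appearing at the end of the proof of Lemma \ref{qis0} (but with $\oDR^+_t$ as the kernel rather than as the quotient). Taking the associated long exact sequence on homology, every third term involves $H(\oDR_t^+, \sd)$, which vanishes by Lemma \ref{qis0}. Hence the connecting maps and neighboring terms collapse, yielding an isomorphism $H(\oDR_t, \sd) \iso H(\oDR, \sd)$ induced by the projection, as claimed.

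There is no real obstacle; the only thing to be careful about is confirming that the projection $\oDR_t \onto \oDR$ really is a map of complexes (i.e., it intertwines $\sd$ on source and target), which is immediate from the $t$-grading argument above. The substantive content has already been packaged into Lemma \ref{qis0}.
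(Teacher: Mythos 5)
Your proof is correct and follows exactly the route the paper intends: the short exact sequence $0 \to \oDR_t^+ \to \oDR_t \to \oDR \to 0$ of complexes, the long exact sequence on homology, and the acyclicity of $(\oDR_t^+,\sd)$ from Lemma \ref{qis0}. The paper simply calls this "an immediate consequence" without writing it out, and your observation that $\sd$ preserves the $t$-degree decomposition is the right justification that the sequence is one of complexes.
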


\subsection{}\label{zt}
Following \cite{GScyc}, we observe that the derivation $(-\ad t)$ of
the algebra $A*_\bk\bk[t]$ naturally gives rise to a contraction
operation $\bi_t: \Om_t^{p,q} \to \Om_t^{p-1,q+1} $. By definition,
the map $\bi_t$ is the (super) derivation of the algebra $\Om_t $ that
acts on generators by
\[
\bi_t:\ t \mto 0,\quad \Om^0=A \mto 0,\quad \Om^1\supset\sd\!
A\ni\sd\! a\mto [a,t]\quad\forall a\in A.
\]

One checks that $\bi_t^2=0$ and that $\bi_t$ descends to a map $\si_t:
\DR_t \to \DR_t $.  The operations $\bi_t$ and $\si_t$ also descend to
the corresponding reduced versions.

We are now able to give, following \cite{GScyc}, a more conceptual
definition of the map $\si: \DR^\hdot \to \Om^{\hdot-1}$ that was
introduced in \S\ref{sec2} using an explicit formula
\eqref{si}. Specifically, one easily checks that
\begin{equation}\label{i=i}\si = \iota \ccirc \si_t, 
  \quad\text{as    maps}\quad
  \DR^\hdot =\DR_t^{\hdot,0}\ \too\
  \Om^{\hdot-1}=\DR_t^{\hdot-1,1}.
\end{equation}

It follows immediately that the differentials $\sd$ and $\bi_t$
(hence, also $\si_t$) commute with $\ddt$, and that the differentials
$\sd$ and $\si_t$, on $\oDR_t$, anti-commute (see \cite{CBEG} and
\cite{GScyc} for the latter fact).

\subsection{The Tsygan map}\label{ss:tsygan-map}
Let $\ohDRt^\hdot = \ohDRt(A)^\hdot := \prod_{q\geq0}\
\oDR_t^{\hdot,q}$ be the $t$-adic completion of $\oDR_t$.  We define
the {\em Tsygan map} as a $\bk$-linear map $\EE:\ \oOm
\to\ohDRt^\hdot$ given, for any $a_0, a_1, \ldots, a_n \in A$, by
\begin{equation}\label{e:Tdefn}
  \EE(a_0 \sd\! a_1 \cdots \sd\! a_n) =
  \sum_{k_0, \ldots, k_n \geq 0}\ \frac{1}{(k_0 + \cdots + k_n+n)!}
  (a_0 t^{k_0} \sd\! a_1 t^{k_1} \cdots \sd\! a_n t^{k_n})\cc.
\end{equation}

The map $\EE|_{\oOm^n}$ may be viewed as a noncommutative analogue of
multiplication by $\frac{1}{n!}\exp(t)$.  To explain this, we view the
algebra $\Om \o\bk[t]$ as a quotient of the algebra $\Om *_\bk\bk[t]$
by the two-sided ideal generated by $[t, \Om ]$. Thus, one has the
natural algebra homomorphism $\Om_t \to \Om [t]$ that `makes the
variable $t$ central.'  This homomorphism clearly descends to a linear
map $\oDR_t \to \oDR [t]$. The latter map can further be extended, by
continuity, to a map $\ab_t:\ \ohDRt \to \oDR[\![t]\!]$, between the
corresponding $t$-adic completions. Then, one has
\begin{lemma}\label{n!} For every $f\in \oOm^n$,
  in $\oDR[\![t]\!]$ we have: $\dis\ \ab_t\ccirc\EE(f) \ =\
  \mbox{$\frac{1}{n!}$}\cdot\exp(t)\, f\cc$.
\end{lemma}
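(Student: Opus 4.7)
The plan is to apply $\ab_t$ termwise in formula \eqref{e:Tdefn} and recognize the resulting power series as $\frac{1}{n!}\exp(t)\cdot f\cc$. The main point is that, by definition, $\ab_t$ kills $[t, \Om]$, so after passing to $\oDR[\![t]\!]$ the variable $t$ becomes central and can be pulled through each $\sd a_i$. Thus for $f = a_0\sd a_1 \cdots \sd a_n$ and every tuple $(k_0, \ldots, k_n)$, one has $\ab_t(a_0 t^{k_0} \sd a_1 t^{k_1} \cdots \sd a_n t^{k_n})\cc = t^{k_0 + \cdots + k_n}\cdot f\cc$ in $\oDR[\![t]\!]$.

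Next I would collect terms of a given total $t$-degree. Grouping the sum in \eqref{e:Tdefn} by $m := k_0 + \cdots + k_n$ gives
$$
\ab_t\ccirc\EE(f) \,=\, \sum_{m\geq 0}\, \#\{(k_0,\ldots,k_n)\in\ZZgez^{n+1} : \ssum k_i = m\}\cdot \frac{t^m}{(m+n)!}\, f\cc.
$$
The cardinality of the indexing set is the standard stars-and-bars count $\binom{m+n}{n} = \frac{(m+n)!}{m!\,n!}$, so the coefficient of $t^m$ simplifies to $\frac{1}{m!\,n!}$. Pulling out $\frac{1}{n!}$ leaves exactly $\sum_{m\geq 0}\frac{t^m}{m!} = \exp(t)$, which yields the claimed identity.

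Since the whole argument reduces to an elementary combinatorial simplification once the centralization property of $\ab_t$ is noted, there is no genuine obstacle. The only point that deserves a line of justification is why the termwise application of $\ab_t$ is legitimate on the $t$-adic completion: each fixed power $t^m$ receives contributions from only finitely many tuples $(k_0,\ldots,k_n)$, so the rearrangement in the display above is a valid identity in $\oDR[\![t]\!]$ and $\ab_t$ is continuous for the $t$-adic topology by construction.
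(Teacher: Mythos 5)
Your argument is correct and essentially identical to the paper's proof: both apply $\ab_t$ termwise to make $t$ central, group the sum in \eqref{e:Tdefn} by the total $t$-degree $m=k_0+\cdots+k_n$, and simplify $\binom{m+n}{n}\cdot\frac{1}{(m+n)!}=\frac{1}{m!\,n!}$ to obtain $\frac{1}{n!}\exp(t)\,f\cc$. The paper denotes the stars-and-bars count by $P_n(k)$; your added remark on $t$-adic convergence is a nice bit of explicitness the paper leaves implicit.
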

\proof Given positive integers $k$ and $n$, let $P_n(k)$ denote the
number of partitions $k_0 + \cdots + k_n=k$, of $k$ into $n+1$ parts.
That number is given by the formula $P_n(k)=\frac{(k+n)!}{k!\,n!}.\ $
Using this and the definition of the map $\EE$, for every $ f=a_0
\sd\! a_1 \cdots \sd\! a_n\in \oOm^n$, we compute
\begin{multline*}
  \ab_t\ccirc\EE(f)\ =\ \sum_{k_0, \ldots, k_n \geq 0} \frac{1}{(k_0 +
    \cdots + k_n+n)!}\cdot
  \ab_t((a_0 t^{k_0} \sd\! a_1 t^{k_1} \cdots \sd\! a_n t^{k_n})\cc)\\
  \sum_{k_0, \ldots, k_n \geq 0} \frac{1}{(k_0 + \cdots +
    k_n+n)!}\cdot t^{k_0+\ldots+k_n}\,(a_0 \sd\! a_1 \cdots \sd\!
  a_n)\cc \ =\ \sum_{k\geq0} P_n(k) \frac{1}{(k+n)!}\cdot t^k\,
  f\cc\\
  =\ \sum_{k\geq0}\frac{(k+n)!}{k!\,n!} \frac{1}{(k+n)!}\cdot t^k\,
  f\cc \ =\ \sum_{k\geq0}\frac{1}{k!\,n!}\cdot t^k\, f\cc\ =\
  \frac{1}{n!}\cdot\exp(t)\, f\cc.\quad \tag*{$\Box$}
\end{multline*}

\begin{theorem}\label{t:Tthm}
The map $\EE$ has the properties:
\begin{equation}\label{p:ddtT}
  \mathsf{(i)}\en\  \mbox{$\frac{d}{dt}$}\ccirc\EE = \EE;\qquad \ 
  \mathsf{(ii)}\en\ 
  \EE \ccirc \msB = \sd \ccirc \EE;\qquad \ \mathsf{(iii)}\en\ 
  \EE \ccirc \msb = \si_t \ccirc \EE.
\end{equation}
\end{theorem}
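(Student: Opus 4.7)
All three identities are $\bk$-linear, so by \eqref{aaa} it suffices to verify each on a generic monomial $\omega = a_0\,\sd a_1\cdots\sd a_n\in\oOm^n$. I will repeatedly use that in the commutator quotient $\DR_t$ a cyclic rotation of a word $X_1X_2\cdots X_r$ incurs the sign $(-1)^{|X_1|(|X_2|+\cdots+|X_r|)}$; in particular the degree-$0$ factors (the $t^k$ and the $a_i$) may be cycled past anything without sign cost. For part (i), $\ddt$ is a derivation on $\Om_t$ killing $A$ and $\sd A$ and sending $t^k\mapsto k t^{k-1}$, so applied to the typical summand of $\EE(\omega)$ it yields $\sum_{i=0}^n k_i\cdot(a_0 t^{k_0}\cdots t^{k_i-1}\cdots\sd a_n t^{k_n})\cc$. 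Reindexing $k_i\mapsto k_i+1$ in the $i$-th summand turns this into $\sum\frac{\sum_i(k_i+1)}{(K+n+1)!}(a_0 t^{k_0}\cdots\sd a_n t^{k_n})\cc=\sum\frac{K+n+1}{(K+n+1)!}(\cdots)\cc=\EE(\omega)$.

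For (ii), a direct expansion using $\sd t=0$ and $\sd(\sd a_i)=0$ gives $\sd\EE(\omega)=\sum\frac{1}{(K+n)!}(\sd a_0\,t^{k_0}\sd a_1\,t^{k_1}\cdots\sd a_n\,t^{k_n})\cc$. To evaluate $\EE\,\msB(\omega)=\sum_{j=0}^n\EE(\kap^j(\sd a_0\cdots\sd a_n))$, I first derive the symmetric formula
\[
\EE(\sd b_0\,\sd b_1\cdots\sd b_n)\ =\ \frac{1}{n+1}\sum_{k_0,\ldots,k_n}\frac{1}{(K+n)!}\bigl(\sd b_0\,t^{k_0}\cdots\sd b_n\,t^{k_n}\bigr)\cc
\]
for any pure $(n+1)$-form. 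Indeed, under \eqref{aaa} we view $\sd b_0\cdots\sd b_n$ as $1\cdot\sd b_0\cdots\sd b_n$, so $\EE$ produces a sum with $n{+}2$ indices; cyclically absorbing the extra $t$-factor into any of the $n{+}1$ slots between the $\sd b_i$'s produces $n{+}1$ equal-valued expressions, whose average uses $\sum_i(k_i+1)=K+n+1$. Applied to each cyclic shift, the symmetric formula together with the observation that the sign $(-1)^{jn}$ coming from $\kap^j$ is exactly cancelled by the sign from a cyclic rotation in $\DR_t$ bringing $\sd a_0$ back to the front gives $\EE(\kap^j(\sd a_0\cdots\sd a_n))=\frac{1}{n+1}\sum\frac{1}{(K+n)!}(\sd a_0\,t^{k_0}\cdots\sd a_n\,t^{k_n})\cc$, independent of $j$. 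Summing over $j=0,\ldots,n$ cancels the factor $\frac{1}{n+1}$ and recovers $\sd\EE(\omega)$.

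For (iii), $\bi_t$ is a derivation with $\bi_t(a)=\bi_t(t)=0$ and $\bi_t(\sd a)=[a,t]$. Applying $\si_t$ termwise to $\EE(\omega)$ and expanding $[a_\ell,t]=a_\ell t-t a_\ell$ produces, at each $\ell=1,\ldots,n$, a pair of sums that after the substitutions $k_\ell\mapsto k_\ell-1$ and $k_{\ell-1}\mapsto k_{\ell-1}-1$ respectively share the coefficient $1/(K+n-1)!$ and the summand $Y_\ell:=(a_0\,t^{k_0}\cdots\sd a_{\ell-1}\,t^{k_{\ell-1}}a_\ell\,t^{k_\ell}\sd a_{\ell+1}\cdots\sd a_n\,t^{k_n})\cc$, disagreeing only on the boundary strata $\{k_{\ell-1}=0\}$ (positive contribution) and $\{k_\ell=0\}$ (negative contribution). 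On the other side, starting from the standard Hochschild expansion $\msb(\omega)=\sum_{i=0}^{n-1}(-1)^i\,a_0\sd a_1\cdots\sd(a_ia_{i+1})\cdots\sd a_n+(-1)^n a_na_0\sd a_1\cdots\sd a_{n-1}$ and applying the Leibniz rule $\sd(a_ia_{i+1})=\sd a_i\cdot a_{i+1}+a_i\cdot\sd a_{i+1}$, I split each merge into a ``left'' and a ``right'' piece and match them with the boundary strata of $\si_t\EE$: the right piece at $i$ with the $\ell=i$, $k_i=0$ negative contribution; the left piece at $i$ with the $\ell=i+1$, $k_i=0$ positive contribution; and the wraparound term with the $\ell=n$, $k_n=0$ negative contribution, using cyclic rotation in $\DR_t$ to move the degree-$0$ factor $a_n$ to the front. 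In every case the signs $(-1)^{\ell-1}$ from $\si_t\EE$ align with $(-1)^i$ from $\EE\msb$, and the factorials agree. The main obstacle is exactly this bookkeeping in (iii): handling the wraparound term uniformly via cyclic rotation in $\DR_t$, together with tracking the signs and shifted index ranges simultaneously across the many matches.
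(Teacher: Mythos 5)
Parts (i) and (iii) are sound and follow essentially the same route as the paper: for (i), differentiating with the derivation $\ddt$ and re-indexing $k_i\mapsto k_i+1$ is exactly the paper's argument; for (iii), expanding $\bi_t(\sd a_\ell)=[a_\ell,t]$, shifting $k_\ell$ and $k_{\ell-1}$ to produce the telescoping cancellation, and identifying the surviving boundary strata with the terms of $\msb$ (including the wraparound via cyclic rotation in $\DR_t$) is precisely the content of the paper's identity
$\sum_{p,q\geq0}\frac{t^p[t,a]t^q}{(p+q+n)!}=\sum_{j\geq0}\frac{t^ja-at^j}{(j+n-1)!}$
followed by matching against the explicit form of $\msb$.

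Part (ii), however, contains a genuine gap. Your ``symmetric formula''
\[
\EE(\sd b_0\cdots\sd b_n)\ \stackrel{?}{=}\ \frac{1}{n+1}\sum_{k_0,\ldots,k_n}\frac{1}{(K+n)!}\bigl(\sd b_0\,t^{k_0}\cdots\sd b_n\,t^{k_n}\bigr)\cc
\]
is false, and so is the subsequent claim that $\EE(\kap^j(\sd a_0\cdots\sd a_n))$ is independent of $j$. Writing $\sd b_0\cdots\sd b_n=1\cdot\sd b_0\cdots\sd b_n$ under \eqref{aaa}, $\EE$ produces terms $(t^{m_0}\sd b_0\,t^{m_1}\cdots\sd b_n\,t^{m_{n+1}})\cc$, and a cyclic rotation in $\DR_t$ moves $t^{m_0}$ past the rest at zero cost; this merges $t^{m_0}$ with $t^{m_{n+1}}$ and yields
\[
\EE(\sd b_0\cdots\sd b_n)=\sum_{k_0,\ldots,k_n}\frac{k_n+1}{(K+n+1)!}\bigl(\sd b_0\,t^{k_0}\cdots\sd b_n\,t^{k_n}\bigr)\cc,
\]
where only the \emph{seam} slot $k_n$ receives the extra $+1$; cyclic rotation never lets you drop the extra $t$-power into an interior slot. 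For $n=1$ with distinct $b_0,b_1$, the coefficient of $(\sd b_0\,\sd b_1\,t)\cc$ is $2/3!=1/3$, while your formula gives $1/4$, so the two sides differ as elements of $\DR_t$. In particular the $n+1$ terms $\EE(\kap^j(\sd a_0\cdots\sd a_n))$ are pairwise distinct (the $j$-th one has the $+1$ in slot $n-j$). The conclusion $\EE\msB=\sd\EE$ is still correct because summing over $j$ gives $\sum_j(k_{n-j}+1)=K+n+1$, collapsing the coefficient to $1/(K+n)!$, exactly as in the paper; but the step you need is this telescoping \emph{of the sum}, not an equality of the individual summands. If you replace the symmetric-formula claim with the correct statement for each $j$ and sum, your argument for (ii) becomes valid and coincides with the paper's.
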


This theorem will be proved in \S\ref{ts_pf}.
\begin{remark} We have arrived at the definition of the map $\EE$ by
  analyzing a construction of Tsygan \cite{T11} (that construction
  was, in its turn, motivated by our results \cite{GScyc}).  In
  particular, equations \eqref{p:ddtT}(ii)--(iii) are an adaptation of
  the main result of \cite{T11}.  It is likely that a proof of these
  equations, which is more conceptual than the one given in
  \S\ref{ts_pf} below, can be extracted from \cite{T11}, although the
  arguments in \emph{op.~cit.}~are written in the setting of the
  standard (i.e., commutative) equivariant cochain complex rather than
  the extended noncommutative de Rham complex.  However, such a
  noncommutative adaptation of the argument of \cite{T11} would be
  less direct since it would have to go through an auxiliary, much
  larger, complex $\oOm (\oOm A)$ (this would allow one to write an
  analogue of $\EE$ as an exponential of a certain contraction map on
  this larger complex and to apply the noncommutative calculus
  developed in \cite{TT}).
\end{remark}

\begin{remark}
  One can use \eqref{p:ddtT}(i) to find the coefficients of the map
  $\EE$ in the summation \eqref{e:Tdefn}.  Although one might expect
  that these coefficients should be $\frac{1}{\deg_t !}$, inspecting
  the LHS of \eqref{p:ddtT}(i) more carefully one sees that it must be
  $\frac{1}{(\deg_t +\deg_{\DR})!}$ as indicated.
\end{remark}

\begin{remark}\label{r:gs-per-chain} Restricting equations   
  \eqref{p:ddtT}(ii)--(iii) to the harmonic part of $\oOm$ and using
  the chain map $\sN!$, defined as $\sN!|_{\oOm^n} = n! \Id$,
  one obtains $\EE|_{P\oOm} \sN!  \ccirc \sd = \sd \ccirc \EE|_{P\oOm}
  \sN!$ and $\EE|_{P\oOm} \sN! \ccirc \bi = \si_t \ccirc \EE|_{P\oOm}
  \sN!$.

  Curiously, we do not know if there is any explicit formula, without
  using harmonic decomposition, for a map $\EE': \oOm \to \ohDRt$
  which satisfies $\EE' \ccirc \sd = \sd \ccirc \EE'$ and $\EE' \ccirc
  \bi = \si_t \ccirc \EE'$. (To define such a map using harmonic
  decomposition, one could set $\EE' = \EE \sN! P$.)

  Perhaps this is related to the fact that (the antiharmonic part of)
  the bicomplex $(\oOm, \sd, \bi)$ is somewhat badly behaved, i.e.,
  the homologies of $(\oOm \otimes R, \bi - u \sd)$ and
  $(\oOm[\![u]\!], \bi - u\sd)$ pick up the huge extra factor
  $\overline{[\sd \Om, \sd \Om]}$ (see Theorem \ref{t:perneg}).  On
  the other hand, we will see in \S \ref{ext_sec} and \S
  \ref{negative} below that the homologies of $(\oDR_t \otimes R,
  \si_t - u \sd)$ and $(\oDR_t[\![u]\!], \si_t - u\sd)$ are nicely
  expressible in terms of ordinary cyclic and Karoubi-de Rham
  homology.  This suggests that the bicomplex $(\oDR_t, \si_t, \sd)$
  should indeed be more closely related to $(\oOm, \msb, \msB)$ than
  to $(\oOm, \bi, \sd)$.
\end{remark}

Next, we recall that the differentials $\msB$ and $\msb$, as well as
$\sd$ and $\si_t$, anti-commute.  Therefore, thanks to Theorem
\ref{t:Tthm}, the map $\EE$ gives a morphism $(\oOm,\ \msB,\msb)\ \to
(\ohDRt,\ \sd, \si_t)$, of bicomplexes. This induces maps
$H^\hdot(\oOm, \msb) \to H^\hdot(\ohDRt, \si_t)$ and $H^\hdot(\oOm /
\msB \oOm, \msb) \to H^\hdot(\ohDRt / \sd \ohDRt, \si_t)$ on homology.

Observe that, since the differential $\si_t$ is homogeneous in $t$,
each individual homology group of the complexes $(\oDR_t, \si_t)$ and
$(\oDR_t / \sd \oDR_t, \si_t)$ has an additional grading
\[H^p(\oDR_t,\ \si_t)=
\oplus_q\ H(\oDR_t,\ \si_t)^{p,q},\en\text{and}\en
H^p(\oDR_t / \sd \oDR_t,\ \si_t)=
\oplus_q\ H(\oDR_t / \sd \oDR_t,\ \si_t)^{p,q},\]
where $p$ refers to the de Rham degree
and $q$  refers to   degree in $t$.
Therefore, the map $\EE$ produces, by
separating degrees in $t$, 
an infinite collection $\{\EE^q,\ q=0,1,\ldots\}$ of maps 
\begin{equation}\label{hee}
H^\hdot(\oOm, \msb) \stackrel{\EE^q}\too H^\hdot(\oDR_t,
  \si_t)^{\hdot,q},\en\text{and}\en
H^\hdot(\oOm / \msB \oOm,\ \msb)\
\stackrel{\EE^q}\too H(\oDR_t / \sd \oDR_t,\  \si_t)^{\hdot,q}.
\end{equation}
For  $q=0$, the above maps   reduce to the maps
$f\mto f\cc$ and
 $f \mmod(\im\msB)\ \mto\ f\cc\mmod(\im\sd)$.

 Observe further that the map $\ddt$ commutes with $\sd$ and $\si_t$,
 and hence it induces endomorphisms of the complex $(\oDR_t, \si_t)$
 and $(\oDR_t / \sd \oDR_t, \si_t)$.  On homology, this gives maps
\[
H(\oDR_t,
  \si_t)^{p,\hdot}
\stackrel{\frac{d}{dt}}\too H(\oDR_t,
  \si_t)^{p,\hdot-1},\en\text{and}\en
H(\oDR_t / \sd \oDR_t,\ \si_t)^{p,\hdot}\stackrel{\frac{d}{dt}}\too
 H(\oDR_t / \sd \oDR_t,\ \si_t)^{p,\hdot-1}.
 \]
It is clear from \eqref{p:ddtT}(i) that the maps $\EE^q,\ q\geq 0,$
satisfy a chain of relations
\begin{equation}\label{ddtH}
\mbox{$\frac{d^k}{dt^k}$}\ccirc
 \EE^q = \EE^{q-k}, \quad\text{for every}\en  k \leq q.
\end{equation}

Our second important result about the Tsygan map, that will be proved
in section \ref{heq_pf}, reads

\begin{theorem} \label{t:heq} Each of the maps in \eqref{hee} is an
  isomorphism, for every $q \geq 0$.
\end{theorem}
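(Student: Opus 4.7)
The plan is to establish both families of isomorphisms in \eqref{hee} by induction on the $t$-weight $q$. For the base case $q = 0$, formula \eqref{e:Tdefn} gives $\EE^0(f) = \frac{1}{n!}\, f\cc$ for $f \in \oOm^n$. Since $\oDR_t^{\idot+1,\,-1} = 0$ and, by \eqref{i=i}, the restriction $\si_t|_{\oDR_t^{\idot,0}}$ is identified via $\iota: \oDR_t^{\idot-1,1} \iso \oOm^{\idot-1}$ with the map $\si$, we have
\[
H(\oDR_t, \si_t)^{\idot,0} \;=\; \ker(\si: \oDR \to \oOm), \qquad H(\oDR_t/\sd\oDR_t,\, \si_t)^{\idot,0} \;=\; \ker(\si: \oDR/\sd\oDR \to \oOm/\sd\oOm).
\]
Thus the base case is precisely the content of \eqref{hhdef}--\eqref{hcdef} from Theorem \ref{HHHC}, up to a harmless nonzero scalar.

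For the inductive step, I would exploit the identity $\frac{d}{dt} \circ \EE^q = \EE^{q-1}$ from \eqref{ddtH}. This produces a commutative triangle in which, if $\EE^{q-1}$ is already an isomorphism, the problem reduces to the following key lemma: for every $q \geq 1$, the operator $\frac{d}{dt}$ induces isomorphisms
\[
\tfrac{d}{dt}:\ H(\oDR_t, \si_t)^{\idot, q} \iso H(\oDR_t, \si_t)^{\idot, q-1}, \qquad \tfrac{d}{dt}:\ H(\oDR_t/\sd\oDR_t,\, \si_t)^{\idot, q} \iso H(\oDR_t/\sd\oDR_t,\, \si_t)^{\idot, q-1}.
\]

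The hard part is proving this key lemma, a purely structural statement about $(\oDR_t^+, \si_t)$ that makes no reference to the Tsygan map. One approach is a chain-level construction: using the canonical description $\oDR_t^{\idot, q} \cong \Om^{\otimes q}/(\bZ/q)$ from \eqref{oplus}, attempt to define an integration operator $J: \oDR_t^{\idot, q-1} \to \oDR_t^{\idot, q}$ inserting a fresh $1 \in \Om^0$ slot in a new cyclic position (suitably symmetrized and normalized), and verify that $J$ is a $\si_t$-chain map up to an explicit homotopy with $\frac{d}{dt} \circ J - \Id$ and $J \circ \frac{d}{dt} - \Id$ both $\si_t$-coboundaries; a naive placement of the inserted $1$ produces unwanted merge-terms, so the combinatorics of the symmetrization are the real obstacle. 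A more conceptual alternative I would pursue in parallel is a spectral sequence argument: filter the bicomplex $(\oDR_t, \sd, \si_t)$ by $t$-degree, giving $E_1 = H(\oDR_t, \sd)$, which by Lemma \ref{qis0} is concentrated in $t$-degree zero and equals $\oHD$; comparison with the analogous $t$-degree filtration on the target of $\EE$ applied to $(\oOm, \msB, \msb)$ should then force the desired isomorphism pattern on the $q$-graded pieces, using the base case to identify the $E_1$-terms.

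The cyclic version is treated in parallel, since all the operators $\EE$, $\si_t$, $\frac{d}{dt}$, and the putative integration map $J$ descend to the $\sd$-quotient; alternatively, it can be deduced from the Hochschild case via a five-lemma argument applied to the long exact sequences induced by the short exact sequence $0 \to \sd\oDR_t \to \oDR_t \to \oDR_t/\sd\oDR_t \to 0$ together with the Connes exact sequence \eqref{e:connes-seq} on the Hochschild side. Throughout, the delicate bookkeeping concerns the combinatorial constants $1/(k_0 + \cdots + k_n + n)!$ appearing in \eqref{e:Tdefn}, which are precisely tuned to make the Tsygan identities \eqref{p:ddtT} hold cleanly; I would track de Rham and $t$-degrees separately at each step to keep these factors in order.
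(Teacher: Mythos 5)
Your base case is correct: by \eqref{e:Tdefn} the $t$-degree-zero part of $\EE$ is $f\mapsto\frac{1}{n!}f\cc$, and by \eqref{i=i} the $q=0$ piece of the $\si_t$-cohomology (and of its $\sd$-quotient) is exactly the right-hand side of \eqref{hhdef}--\eqref{hcdef}; this is spelled out in Remark \ref{0rem}. Your reduction via $\frac{d}{dt}\ccirc\EE^q=\EE^{q-1}$ is also sound, and it is essentially how the paper obtains injectivity of $\EE^q$ for all $q$ (from $\frac{d^q}{dt^q}\ccirc\EE^q=\EE^0$).

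The problem is that you then declare the ``key lemma'' --- that $\frac{d}{dt}$ induces an isomorphism $H(\oDR_t,\si_t)^{\hdot,q}\iso H(\oDR_t,\si_t)^{\hdot,q-1}$ (and likewise in the $\sd$-quotient) --- and never prove it. Given the base case and the factorization, this lemma is \emph{equivalent} to the theorem, so you have reformulated rather than reduced the problem, and the entire content is still outstanding. Of your two sketches for it, the first (a chain-level integration operator $J$) you yourself concede does not close up because of the merge terms in the symmetrization. The second, the spectral-sequence argument, does not work as stated: filtering $\oDR_t$ by $t$-degree and taking $\sd$-cohomology first produces an $E_1$-page $H(\oDR_t,\sd)$ which, by Lemma \ref{qis0}/Corollary \ref{qis1}, is concentrated entirely in $t$-degree zero. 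That calculation is fine for computing the \emph{total} cohomology of the bicomplex (it is exactly what is used in the proof of Proposition \ref{p:ext-cyc}), but it delivers no information about the row cohomology $H(\oDR_t,\si_t)^{\hdot,q}$ in each individual $t$-degree, which is what Theorem \ref{t:heq} is about and which, per the theorem, is nontrivial and ``constant'' in $q$ rather than concentrated at $q=0$. There is also no $t$-grading on the source $\oOm$ to ``compare'' with, so the proposed comparison is not meaningful as stated.

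The paper's actual proof of surjectivity takes a completely different route: it filters $\oDR_t$ by degree in $A$, observes that $\gr(\oDR_t,\sd,\si_t)\cong(\oDR_t A',\sd,\si_t)$ where $A'=\bk\oplus\oA$ has the trivial multiplication, and that $\gr\EE=\EE'$ is the Tsygan map for $A'$, thereby reducing to proving surjectivity for $A'$. For $A'$ it builds explicit models ($M^{\otimes\ell}$ and $C_\ell$), computes their $\partial^0$- and $(\partial^0+\partial^1)$-cohomology via positivity arguments (Lemmas \ref{l:tp}, \ref{l:pos-crit}, \ref{l:gc}, Claim \ref{cl:hcl-p}), and then identifies the generators $\Upsilon^\epsilon_{q,\ell}$ with the image of $\EE'$ (Lemma \ref{zeta}). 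None of this is foreseen by your proposal; the hard part of the theorem is genuinely not addressed.
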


\subsection{Proof of Theorem  \ref{t:Tthm}}\label{ts_pf}
Equation  \eqref{p:ddtT}(i) follows from the expansion
\[
\mbox{$\frac{d}{dt}$}(t^{k_0} a_0 t^{k_1} \sd\! a_1 ... t^{k_n} \sd\!
a_n)\ =\ \sum\nolimits_j\ k_j t^{k_0} a_0 ... t^{k_{j-1}} \sd\!
a_{j-1} t^{k_j - 1} \sd\!  a_{j} t^{k_{j+1}} \cdots t^{k_n} \sd\! a_n,
\]
generalizing the proof that $\frac{d}{dt} \exp(t) = \exp(t)$, in terms
of the expansion $\exp(t) = \sum_{m \geq 0} \frac{1}{m!} t^m$.

To prove \eqref{p:ddtT}(ii), we compute

\begin{multline*}
  \!\!\EE\ccirc\! \msB(a_0 \sd\! a_1 \cdots \sd\! a_n) = \sum_{j=0}^n
  \sum_{k_0,\ldots,k_{n+1}}\!  \frac{1}{(k_0 + \cdots +
    k_{n+1}+(n+1))!}  (t^{k_0} \sd\! a_j t^{k_1} \sd\! a_{j+1} \cdots
  t^{k_n} \sd\! a_{j-1}
  t^{k_{n+1}})\cc \\
  = \sum_{k_0, \ldots, k_n} \frac{1}{(k_0+ \cdots + k_n+n)!} (\sd\!a_0
  t^{k_0} \cdots \sd\!a_n t^{k_n})\cc = \sd \ccirc \EE (a_0 \sd\! a_1
  \cdots \sd\! a_n).
\end{multline*}
Here and  below, the indices
$k_0,\ldots,k_{n+1}$ always run over all nonnegative integers.

To prove \eqref{p:ddtT}(iii) we will use the
identities, for every $n\geq1$:
\begin{multline*}
\sum_{p,q\geq0}\ \frac{t^p\,[t,a]\,t^q}{(p+q+n)!}=
\sum_{p,q\geq0}\ \frac{t^{p+1}a t^q-t^pat^{q+1}}{(p+q+n)!}\ =
\qquad\text{(taking $k:=p+1$ and $\ell:=q+1$)}\\
=
\left(\sum_{k,q\geq0}\frac{t^ka t^q}{(k-1+q+n)!} -
\sum_{q\geq0}\frac{at^q}{(q+n-1)!}\right)
-\left(\sum_{p,\ell\geq0}\frac{t^pat^\ell}{(p+\ell-1+n)!}
-\sum_{p\geq0}\frac{t^pa}{(p+n-1)!}\right)\\
=\sum_{p\geq0}\ \frac{t^pa}{(p+n-1)!}-\sum_{q\geq0}\ \frac{at^q}{(q+n-1)!}
=\sum_{j\geq0}\ \frac{t^ja-at^j}{(j+n-1)!}.
\end{multline*}

Using the above formula, for every $a_0, \ldots, a_n\in A$,
in $\oOm_t$, we find
\begin{align}
  &\en\sum_{j=1}^n \sum_{k_0, \ldots, k_n
  }\frac{(-1)^{j}}{(k_0+\cdots+k_{n}+n)!}  a_0 t^{k_0} \sd\! a_1
  t^{k_1} \cdots \sd\!a_{j-1} t^{k_{j-1}}\, [t,a_j]\, t^{k_j}
  \sd\! a_{j+1} t^{k_{j+1}} \cdots \sd\! a_n t^{k_n}& \nonumber\\
  &= \sum_{j=1}^n \sum_{k_0, \ldots, \hat k_j, \ldots, k_n
  }\frac{(-1)^{j}}{(k_0+\cdots \hat k_j \cdots
    +k_{n}+(n-1))!}\bigl(a_0 t^{k_0} \sd\! a_1 t^{k_1} \cdots \sd\!
  a_{j-1} t^{k_{j-1}} a_j
  \sd\! a_{j+1} t^{k_{j+1}} \cdots \sd\! a_n t^{k_n} &\nonumber\\
  &\hskip50mm- a_0 t^{k_0} \sd\! a_1 t^{k_1} \cdots \sd\! a_{j-1}\,
  a_j t^{k_{j-1}}
  \sd\! a_{j+1} t^{k_{j+1}} \cdots \sd\! a_n t^{k_n}\bigr) &\label{long}\\
  &= \sum_{j=1}^{n-1}\left(\sum_{k_0, \ldots, \hat k_j, \ldots, k_n }
    \frac{(-1)^{j}}{(k_0+\cdots \hat k_j \cdots +k_{n}+(n-1))!}  a_0
    t^{k_0} \sd\! a_1 \cdots \sd\! a_{j-1} t^{k_{j-1}} \sd(a_j
    a_{j+1})
    t^{k_{j+1}}  \cdots \sd\! a_n t^{k_n}\right) &\nonumber\\
  &\quad+ \sum_{k_1, \ldots, k_n } \frac{1}{(k_1+\cdots
    +k_{n}+(n-1))!}  (a_0 a_1 t^{k_1} \sd\! a_2 t^{k_2} \cdots \sd\!
  a_n t^{k_n} + (-1)^n a_0 t^{k_1} \sd\! a_1 t^{k_2} \cdots \sd\!
  a_{n-1} t^{k_n} a_n).&\nonumber
\end{align}

Now, by cyclic symmetry, in $\oDR_t$, 
$$(a_0 t^{k_1} \sd\! a_1 t^{k_2} \cdots \sd\! a_{n-1} t^{k_n} a_n)\cc
=(a_na_0 t^{k_1} \sd\! a_1 t^{k_2} \cdots \sd\! a_{n-1} t^{k_n})\cc.$$

Therefore, the image in $\oDR_t$ of 
the expression in the last two lines of \eqref{long} may be written
as
\begin{multline*}
\qquad\EE\bigl(\sum_{j=1}^{n-1} \
(-1)^{j}
a_0  \sd\! a_1 \cdots \sd\! a_{j-1}  \sd(a_j a_{j+1})
\sd\! a_{j+2}  \cdots \sd\! a_n\bigr)\\
+\EE(a_0 a_1  \sd\! a_2  \cdots \sd\! a_n)
+(-1)^n \EE(a_na_0 \sd\! a_1  \sd\! a_2  \cdots \sd\! a_{n-1}).\qquad
\end{multline*}
Further, the standard formula for the Hochschild differential written
in terms of differential forms \cite[formula (10)]{CQ1} reads:
\begin{multline*}
  \msb(a_0 \sd\! a_1 \cdots \sd\! a_n)= \sum_{j=1}^{n-1} (-1)^{j} a_0
  \sd\! a_1 \cdots \sd\! a_{j-1} \sd(a_j a_{j+1})
  \sd\! a_{j+2}  \cdots \sd\! a_n\\
  +a_0 a_1 \sd\! a_2 \cdots \sd\! a_n +(-1)^n a_na_0 \sd\! a_1 \sd\!
  a_2 \cdots \sd\! a_{n-1}.
\end{multline*}

Thus, combining everything together and using that the expression in
the top line of \eqref{long} equals $\si_t \ccirc \EE (a_0 \sd\! a_1
\cdots \sd\! a_n)$, we deduce that $\si_t \ccirc \EE (a_0 \sd\! a_1
\cdots \sd\! a_n)= \EE\ccirc\msb(a_0 \sd\! a_1 \cdots \sd\! a_n)$.
\qed

\subsection{Extended  cyclic homology}\label{ext_sec}
It will be convenient to introduce an additional parameter $u$, and
consider the $\bk[u]$-module $\oDR_t \otimes R$. We equip this module
with two different gradings.  The first grading, referred to as {\em
  homological grading}, is defined by assigning the component
$\oDR_t^{p,q} \cdot u^{-r}$ homological degree $\ell=p+2r$. Thus, the
$u$ variable has homological degree $-2$. The $t$ parameter has
homological degree zero, so it does not contribute to the homological
grading.  The second grading, referred to as {\em internal grading},
is defined by assigning the component $\oDR_t^{p,q}\cdot u^{-r}$
internal degree $p+q+r$.

On $\oDR_t \otimes R$, we have a pair of anti-commuting differentials,
$u \sd$ and $\si_t$.  Each of these differentials has homological
degree $-1$ and internal degree zero.  We define $\oEHC(A)$, the {\em
  reduced extended cyclic homology} of $A$, as
\begin{equation}\label{ehc}
\oEHC_\ell(A)=H^\ell(\oDR_t \otimes R,\  \si_t-u\sd),
\end{equation}
where $\ell$ on the RHS stands for  homological degree.

Since the internal grading is preserved by both
differentials, it provides a direct sum  decomposition of complexes,
$$ \oDR_t \otimes R\ =\ \bplus_k\ (\oDR_t \otimes R)_k,\qquad
(\oDR_t \otimes R)_k\ :=\ \bigoplus_{p+q+r=k} \oDR^{p,q}\cdot u^{-r},
$$
where, $(\oDR_t \otimes R)_k$, the homogeneous component of internal
degree $k$, is a subcomplex of $\oDR_t \otimes R$ with the inherited
differential $\si_t-u\sd$ and inherited homological grading.  We write
$\pr_k:\ \oDR_t \o R \to (\oDR_t \o R)_k$ for the projection to that
homogeneous component.

For every integer $m$, we let
\[\oEHC_\ell(A,m):=H^\ell\bigl((\oDR_t \otimes R,\  
\si_t-u\sd)_{\ell-m}\bigr).\] Thus, one obtains a `{\em weight
  decomposition}': $\dis \ \oEHC_\ell(A)=\oplus_m\ \oEHC_\ell(A,m)$,
for $m\in\Z$.

Our main result about extended cyclic homology is the following
theorem. Part (i) of the theorem shows that the knowledge of the
extended cyclic homology is equivalent to the knowledge of both the
usual cyclic homology and the Karoubi-de Rham homology.

\begin{theorem}\label{e:ehc-cyc2} \vi One has a canonical 
$\bk[u]$-module isomorphism
\[
\oEHC(A)\ \cong\ (\oHD(A)\o u^{-1}\bk[u^{-1}]) \en \bplus\en
\oHC(A)[t],
\]
where the variable `$u$' annihilates the second summand, and on
$\oHD(A) \o u^{-1}$ restricts to the natural inclusion $\oHD(A) \into
\oHC(A)$ into the second summand in degree zero in $t$.  Furthermore,
the group $\oEHC_\ell(A,m)$ vanishes for all $m>\il$, and
\begin{equation}\label{qq}
\oEHC_\ell(A,m)\ \cong\ \begin{cases}
\oHD_{\ell-2m}(A)&\qquad\qquad 0<m\leq\il,\\
\oHC_\ell(A)&\qquad\qquad m\leq 0.
\end{cases}
\end{equation}

\vii For every $q\geq0$, there are canonical graded space isomorphisms
\begin{align}
  H(\oDR_t,\
  \si_t)^{\hdot,q}\ &\cong\  \oHH_\idot(A),\label{qhh}\\
  H(\oDR_t / \sd \oDR_t,\ \si_t)^{\hdot,q}\ &\cong\
  \oHC_\idot(A).\label{qhc}
\end{align}
\end{theorem}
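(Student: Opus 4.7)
Part (ii) is immediate from Theorem \ref{t:heq}, which asserts exactly that the maps $\EE^q$ in \eqref{hee} are isomorphisms for every $q\geq 0$.

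For part (i), the plan begins by verifying that both differentials $\si_t$ and $u\sd$ preserve the internal grading $p+q+r$: from the formulas $\si_t\colon(p,q,r)\mapsto(p-1,q+1,r)$ and $u\sd\colon(p,q,r)\mapsto(p+1,q,r-1)$, the sum is preserved in both cases. This yields the weight decomposition $\oEHC_\ell(A)=\bplus_m \oEHC_\ell(A,m)$. The vanishing $\oEHC_\ell(A,m)=0$ for $m>\il$ is a counting argument: a component $\oDR_t^{p,q}\cdot u^{-r}$ in weight $m$ and homological degree $\ell$ forces $p=\ell-2r$ and $q=r-m$, so non-negativity of $p,q,r$ gives $m\leq r\leq \ell/2$.

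The main isomorphisms \eqref{qq} I would derive from the short exact sequence of complexes
\[
0\to \oDR_t^+\otimes R \to \oDR_t\otimes R \to \oDR\otimes R \to 0,
\]
where $\oDR_t^+=\bplus_{q\geq 1}\oDR_t^{\hdot,q}$. Since $\si_t$ sends $\oDR$ into $\oDR_t^+$, the quotient inherits only the differential $u\sd$; within each internal degree $k$ this is the truncated de Rham complex $\oDR^0\cdot u^{-k}\to\cdots\to \oDR^k$, whose homology at degree $\ell$ equals $\oHD_{\ell-2m}(A)$ for weight $m=\ell-k>0$, equals $\oDR^\ell/\sd\oDR^{\ell-1}$ at $m=0$, and vanishes for $m<0$. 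For the subcomplex $\oDR_t^+\otimes R$, I would filter by the $u^{-r}$ exponent; the associated graded is $(\oDR_t^+\cdot u^{-s},\,\si_t)$, whose $\si_t$-cohomology is $\bplus_{q\geq 1}\oHH_\hdot(A)$ by part (ii). The $d_1$ differential of the resulting spectral sequence is induced by $u\sd$, which corresponds via Theorem \ref{t:Tthm} to Connes' differential $u\msB$ acting on Hochschild classes, so $E^2$ recovers $\oHC_\ell(A)$ in each weight $m\leq 0$ via the standard $(\msb,\msB)$-bicomplex description of cyclic homology, and the spectral sequence degenerates there for degree reasons.

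Assembling the pieces via the long exact sequence of the SES yields \eqref{qq} in each weight. I anticipate the most delicate point to be the $m=0$ case: the quotient contributes $\oDR^\ell/\sd\oDR^{\ell-1}$ while the desired answer is the proper subspace $\oHC_\ell(A)=\ker(\si)\cap(\oDR^\ell/\sd\oDR^{\ell-1})$, so the discrepancy must be absorbed by the connecting homomorphism $H^{\ell+1}((\oDR\otimes R)_k)\cong\oHD_{\ell-1}(A)\to H^\ell((\oDR_t^+\otimes R)_k)$, whose image must match the Connes-Karoubi image in Theorem \ref{con} under the Tsygan identification. The global isomorphism $\oEHC(A)\cong (\oHD(A)\otimes u^{-1}\bk[u^{-1}])\oplus \oHC(A)[t]$ then assembles the weight-by-weight pieces, with the stated $u$-multiplication matching because $u\sd$ is the unique way to pass from $u^{-1}\bk[u^{-1}]$-cycles (carrying the $\oHD$ contributions) to $u^0$-cycles (carrying the $\oHC$ contributions in degree zero in $t$), inducing the natural inclusion $\oHD(A)\into\oHC(A)$.
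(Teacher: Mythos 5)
Your handling of part (ii) is essentially correct: the two maps in \eqref{hee} are $H(\oOm,\msb)\to H(\oDR_t,\si_t)^{\hdot,q}$ and $H(\oOm/\msB\oOm,\msb)\to H(\oDR_t/\sd\oDR_t,\si_t)^{\hdot,q}$, and Theorem~\ref{t:heq} says these are isomorphisms; you do also need Lemma~\ref{cq} to recognize the second domain as $\oHC(A)$, but that is a minor point. Your counting argument for the vanishing when $m>\il$ and your analysis of the quotient $\oDR\otimes R$ (truncated de Rham complexes per internal degree) are both correct.

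However, there is a genuine gap in the subcomplex analysis. You filter $\oDR_t^+\otimes R$ by the exponent of $u$, which makes $\si_t$ the $E_0$-differential and $u\sd$ the $d_1$. This is the ``Hodge-type'' filtration of the bicomplex, and its $E^2$ page is $H(\oHH(A),\msB)$, \emph{not} $\oHC(A)$: for general $A$ this spectral sequence has higher differentials and does not degenerate at $E^2$, so ``$E^2$ recovers $\oHC_\ell(A)$\ldots and the spectral sequence degenerates there for degree reasons'' is simply false. (The identification $\oHC(A)\cong H(\oOm/\msB\oOm,\msb)$ from Lemma~\ref{cq} comes from the \emph{opposite} filtration, where the $E_0$-differential is $u\msB$ and the collapse relies on acyclicity of $(P\oOm,\msB)$.) Moreover, your $E_1$ computation is itself off at the edge: for $q=1$, the $\si_t$-cohomology of $\oDR_t^+$ is $\ker(\si_t|_{q=1})$, which is strictly larger than $H(\oDR_t,\si_t)^{\hdot,1}\cong\oHH(A)$, because removing the $q=0$ component of $\oDR_t$ kills the boundaries coming from $\si_t(\oDR)$. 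Finally, you flag the $m=0$ connecting homomorphism as ``the most delicate point'' but leave it to ``must match the Connes--Karoubi image'' without an argument; that is precisely the place where the reduction from $\oDR^\ell/\sd\oDR^{\ell-1}$ to its subspace $\oHC_\ell(A)$ has to be proved.

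The paper avoids all of this by running a single spectral sequence on $\oDR_t\otimes R$, filtered so that $u\sd$ is the $E_0$-differential (equivalently, filtering by $t$-degree). Using the splitting $\oDR_t\otimes R=\oDR_t\oplus u^{-1}\oDR_t[u^{-1}]$ one gets
$E_1=\oDR_t/\sd\oDR_t\ \oplus\ u^{-1}H(\oDR_t,\sd)[u^{-1}]$, then Corollary~\ref{qis1} identifies $H(\oDR_t,\sd)\cong H(\oDR,\sd)=\oHD(A)$; since $\si_t$ raises $t$-degree it kills the second summand on $E_1$, so $E_2=H(\oDR_t/\sd\oDR_t,\si_t)\oplus u^{-1}\oHD(A)[u^{-1}]$, which is $E_\infty$ and which, via part~(ii), is exactly the claimed answer. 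Your SES framework could in principle be made to work, but only after switching to the opposite filtration on $\oDR_t^+\otimes R$ (so that Lemma~\ref{qis0} can be used to kill the $u^{-1}\oDR_t^+[u^{-1}]$ part) and then explicitly computing the $q=1$ edge term and the connecting maps in the long exact sequence, all of which is extra work that the paper's direct argument renders unnecessary.
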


\begin{remark}\label{0rem}
In the special case $q=0$, using the identification in \eqref{i=i},
the above becomes
\begin{align}
  H(\oDR_t,\ \si_t)^{\hdot,0}\ &=\ \ker(\si: \oDR^\hdot \to
  \oOm ^{\hdot-1})\label{brem}\\
  H(\oDR_t/ \sd \oDR_t,\ \si_t)^{\hdot,0}\ &=\ \ker\bigl(\si:
  \oDR^\hdot\!/ \sd\! \oDR^{\hdot-1}\to \oOm ^{\hdot-1}\!/ \sd\!
  \oOm^{\hdot-2} \bigr)\label{orem}.
\end{align}
Therefore,  for $q=0$,  isomorphisms \eqref{qhh}--\eqref{qhc}  may be
equivalently rewritten as 
\begin{align*}
  &\ker(\si: \oDR^\hdot \to \oOm ^{\hdot-1})\ \cong\
  \oHH_\idot(A),\\
  \ker\bigl(\si:& \oDR^\hdot\!/ \sd\! \oDR^{\hdot-1} \to \oOm
  ^{\hdot-1}\!/ \sd\! \oOm^{\hdot-2} \bigr)\ \cong\ \oHC_\idot(A).
\end{align*}
These are nothing but our isomorphisms \eqref{hhdef}--\eqref{hcdef}.
Thus, we see that Theorem \ref{e:ehc-cyc2}(ii) incorporates Theorem
\ref{HHHC}, as the special case $q=0$.
\end{remark}

\subsection{Theorem \ref{t:heq} implies Theorem
  \ref{e:ehc-cyc2}}\label{tt}
We consider part (ii) of Theorem \ref{e:ehc-cyc2} first.  The proof of
this part involves the Tsygan map $\EE$ in a crucial way.

In more detail, the Hochschild homology of $A$ is computed by the
complex $(\oOm, \msb)$. Thus, the map $H(\oOm, \msb)\to
H^\hdot(\oDR_t, \si_t)^{\hdot,q}$ in \eqref{hee}, which is an
isomorphism thanks to Theorem \ref{t:heq}, provides isomorphism
\eqref{qhh}.  Similarly, in the cyclic homology case, we use Lemma
\ref{cq}. Thus, we define isomorphism \eqref{qhc} to be the composite
of the isomorphism $\oHC_\idot(A)\iso H^\hdot(\oOm / \msB \oOm,
\msb)$, of Lemma \ref{cq}, with the map $\EE^q$, where the latter is
an isomorphism by Theorem \ref{t:heq}.

For part (i), we first prove

\begin{proposition}\label{p:ext-cyc} \vi The 
  assignments
\[\pi_m:\ \sum\nolimits_{k\geq0}\ f_k\cdot u^{-k}\ \mto\ 
f_m\mmod (t), \qquad \pi_-:\ \sum\nolimits_{k\geq0}\ f_k\cdot u^{-k}\
\mto\ f_0\mmod (\im\sd),
\]
induce isomorphisms
\begin{align}
\pi_m:\ \oEHC_\ell(A,m)\ &\iso\ \oHD_{\ell-2m}(A),
 & m>0;\label{pim}\\
\pi_-:\ \oEHC_\ell(A,m)\ &\iso\ 
H(\oDR_t/ \sd \oDR_t,\ \si_t)^{\ell,-m},&m\leq0.\label{pi-}
\end{align}

\vii For every $m\leq0$, the following diagram commutes:
\begin{equation}\label{pidiag}
  \xymatrix{
    H^\ell(\oOm\o R,\ \msb-u\msB)\ar[d]_<>(0.5){
      \text{Lemma \ref{cq}}}^<>(0.5){\cong}
    \ar[rr]^<>(0.5){H_\ell(\pr_{\ell-m}\ \circ\ \EE)}&& \ \oEHC_\ell(A,m)\
    \ar[d]^<>(0.5){\pi_-}\\
    H^\ell(\oOm/\msB\oOm, \msb)\ar[rr]^<>(0.5){\EE^{-m}}&&
    H(\oDR_t / \sd \oDR_t,\ \si_t)^{\ell,-m}.
  }
\end{equation}
\end{proposition}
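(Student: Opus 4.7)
The proof naturally separates into three pieces: part (i) for $m > 0$, the commutativity of part (ii), and part (i) for $m \leq 0$ (which will lean on part (ii) and on Theorem \ref{t:heq}).

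For part (i) with $m > 0$, my plan is to analyze the subcomplex $C := ((\oDR_t \otimes R)_{\ell-m},\, \si_t - u\sd)$ via the $t$-adic filtration $F^p C := \bigoplus_{q \geq p} (\oDR_t^{\hdot,q} \otimes R) \cap C$. Since $\si_t$ strictly raises $q$ by one whereas $u\sd$ preserves $q$, on the associated graded the induced $d_0$ is $u\sd$, which up to a shift of $u$-power coincides with $\sd$ on each fixed $q$-stratum $(\oDR_t^{\hdot,q}, \sd)$. By the argument of Lemma \ref{qis0}, this stratum is acyclic for $q \geq 1$, so the $E_1$ page of the spectral sequence is concentrated in the single column $q = 0$, where it computes $H(\oDR,\sd) = \oHD(A)$. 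All higher differentials then vanish automatically and $E_\infty = E_1$. In homological degree $\ell$ and weight $m$, the unique triple $(p,q,r)$ with $q = 0$ is $(\ell - 2m,\, 0,\, m)$, which exists if and only if $m \leq \il$; this yields $\oHD_{\ell-2m}(A)$ in that range and zero otherwise. The edge map is precisely $\pi_m$: it records the $u^{-m}$-coefficient, which is automatically forced into $\oDR^{\ell-2m}$, so that reduction modulo $(t)$ is trivial.

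For part (ii) I will chase a representative $\sum_{k \geq 0} f_k u^{-k}$ with $f_k \in \oOm^{\ell-2k}$ through the diagram, working under the standing hypothesis $m \leq 0$ so that every $k-m \geq 0$. The right-then-down route produces $\sum_{k \geq 0} \EE(f_k)\big|_{t^{k-m}}\cdot u^{-k}$ and then $\pi_-$ extracts the $u^0$-coefficient modulo $\sd \oDR_t$, yielding $\EE(f_0)\big|_{t^{-m}} \bmod \sd \oDR_t$. The down-then-right route first reduces to $f_0 \bmod \msB \oOm$ via Lemma \ref{cq} and then applies $\EE^{-m}$, producing the same class. Well-definedness of all four arrows has been established earlier in the text, so commutativity is immediate.

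For part (i) with $m \leq 0$, I would exploit the Tsygan chain map $T := \pr_{\ell - m} \circ \EE\colon (\oOm \otimes R,\, \msb - u\msB) \to C$, a chain map by Theorem \ref{t:Tthm} together with the observation that $\pr_{\ell-m}$ commutes with $\si_t$ and $u\sd$. Filter both source and target by $u$-degree, $F_k = \{r \leq k\}$; the differentials $\msb, \si_t$ preserve $r$ while $u\msB, u\sd$ lower $r$ by one, so $T$ respects the filtration, and in each fixed homological degree $\ell$ the filtration is bounded (as $r \leq \ell/2$), ensuring convergence of both associated spectral sequences. On the source $E_1$ page the $r = k$ stratum gives $\oHH_{\ell-2k}(A)$; on the target the $r = k$ stratum gives $H(\oDR_t,\si_t)^{\ell-2k,\,k-m}$, which by Theorem \ref{t:heq} also equals $\oHH_{\ell-2k}(A)$ (using $k-m \geq 0$). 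The induced map on $E_1$ is $\EE^{k-m}$, an isomorphism by Theorem \ref{t:heq}, so the spectral sequence comparison theorem forces $H_\ell(T)$ to be an isomorphism. Combined with the commutative diagram of part (ii), the composite $\pi_- \circ H_\ell(T) = \EE^{-m} \circ (\text{Lemma \ref{cq}})$ is a composition of isomorphisms, and therefore $\pi_-$ itself must be an isomorphism. The principal obstacle, I expect, is the bookkeeping among the four gradings (homological, internal, de Rham, and $u$-power) in identifying the edge and comparison maps with $\pi_m$, $\pi_-$, and $\EE^{-m}$; once that is tracked, the substantive non-standard input is Theorem \ref{t:heq}, whose proof is deferred.
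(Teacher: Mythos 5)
Your proof takes a more fragmented route than the paper's, which handles all weights $m$ in one uniform spectral-sequence argument; there is also one concrete error worth flagging.

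\textbf{The $m>0$ case.} Filtering by $t$-degree and identifying $d_0 = u\sd$ is sound, but your key claim that ``this stratum is acyclic for $q \geq 1$'' and hence that ``the $E_1$ page\ldots is concentrated in the single column $q=0$'' is false. Lemma~\ref{qis0} gives acyclicity of $(\oDR_t^{\hdot,q},\sd)$ for $q\geq 1$; however the column-$q$ associated-graded complex is $\bigl((\oDR_t^{\hdot,q}\otimes R)\cap C,\,u\sd\bigr)$, and for any complex $(V,\sd)$ one has $H(V\otimes R,\,u\sd)\,\cong\, V/\sd V\ \oplus\ u^{-1}H(V,\sd)[u^{-1}]$, with the first summand present regardless of whether $(V,\sd)$ is acyclic. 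So for every $q \geq 1$ the $E_1$ page retains a nonzero piece $\oDR_t^{k-q,q}/\sd\oDR_t^{k-q-1,q}$ (with $k=\ell-m$) sitting at homological degree $k-q$. Your conclusion survives because these residual pieces live at degrees $\leq \ell - m - 1 \leq \ell - 2$ and the $d_r$'s strictly raise $q$ (so nothing maps into $(\ell,q=0)$, and targets $(\ell-1,q\geq 1)$ carry $u$-power $c = m+q-1 \geq 1$, hence vanish by Lemma~\ref{qis0}), but as written your ``all higher differentials vanish automatically'' rests on a false premise and needs the above patch.

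\textbf{The $m\leq 0$ case.} Routing through the Tsygan map and Theorem~\ref{t:heq} is not circular (that theorem is proved independently in \S\ref{heq_pf}), but it conscripts the heaviest result in the paper into a lemma that the paper deliberately proves \emph{without} it --- Proposition~\ref{p:ext-cyc} is meant to be an elementary ingredient that is later \emph{combined} with Theorem~\ref{t:heq} to yield Theorem~\ref{e:ehc-cyc2}. The paper simply continues the same bicomplex spectral sequence for all $m$: $E_1 = \oDR_t/\sd\oDR_t \ \oplus\ u^{-1}H(\oDR,\sd)[u^{-1}]$; since $\si_t$ raises $t$-degree it kills the second summand, giving $E_2 = H(\oDR_t/\sd\oDR_t,\si_t) \oplus u^{-1}H(\oDR,\sd)[u^{-1}]$, and the internal grading forces collapse. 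This already gives both \eqref{pim} and \eqref{pi-} in one stroke. Your diagram chase for part (ii) is fine and matches the paper's one-line justification that it follows directly from the formula for $\pi_-$.
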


\begin{proof}
  We view $\oDR_t \otimes R$ as a bicomplex $M$ of the form $M_{p,q} =
  \oDR_t^{p-q} \cdot u^{-q}$.  The differentials are $\si_t:
  M_{p,q}\to M_{p-1,q}$ and $u \sd: M_{p,q}\to M_{p,q-1}$.  We apply
  the spectral sequence associated with that bicomplex, by taking
  cohomology with respect to $u\sd$ first.

  To compute the first page of the spectral sequence, we use the
  natural direct sum decomposition $\oDR_t\o R=\oDR_t \ooplus
  u\inv\oDR_t[u\inv]$.  For the cohomology of the first differential,
  this gives a direct sum decomposition $H(\oDR_t\o R,\ u\sd)=
  \oDR_t/\sd\oDR_t\ \oplus\ u\inv H(\oDR_t,\sd)[u\inv]$.  By Corollary
  \ref{qis1}, the projection modulo $t$ induces an isomorphism
  $H(\oDR_t,\sd)\iso H(\oDR,\sd)$. Therefore, the first page of the
  spectral sequence takes the form $E_1=\oDR_t/\sd\oDR_t\ \oplus\
  u\inv H(\oDR,\sd)[u\inv]$.  The differential $\si_t: E_1\to E_1$, on
  the first page, clearly increases degree in $t$ by 1.  Hence, this
  differential annihilates the direct summand $u\inv
  H(\oDR,\sd)[u\inv]$.  We conclude that the second page of the
  spectral sequence reads $H(\oDR_t/\sd\oDR_t, \si_t)\ \oplus\ u\inv
  H(\oDR,\sd)[u\inv]$, which is the right-hand side of the isomorphism
  of the proposition.  It is immediate to see that the spectral
  sequence collapses at the second page.  This proves part (i). Part
  (ii) follows directly from the formula for the map $\pi_-$.
\end{proof}

To complete the proof of Theorem \ref{e:ehc-cyc2}(i), we note that the
group that appears in the upper left corner of diagram \eqref{pidiag}
equals $\oHC_\ell(A)$, by ~\eqref{e:gs-hh}. Thus, we may (and will)
define isomorphism \eqref{qq} by combining isomorphisms \eqref{qhc}
and those of Proposition \ref{p:ext-cyc} together.

We conclude that the proof of Theorem \ref{e:ehc-cyc2} would be
complete once we prove Theorem \ref{t:heq}.

\begin{remark} An analogue involving the maps $\pi_m$ of the statement
  that diagram \eqref{pidiag} commutes will be given in Proposition
  \ref{p:Timh}(i) in \S\ref{negative}.
\end{remark}

\subsection{}\label{inv_sec}
It is possible to write a relatively explicit formula for the inverse
of the isomorphisms \eqref{pim}--\eqref{pi-}.  To this end, recall
first that the complex $(\oDR_t^+,\sd)$ is acyclic, by Lemma
\ref{qis0}.  Therefore, the Karoubi-de Rham differential $\sd$ induces
an isomorphism $\oDR^+_t/\sd\oDR^+_t$ $\iso \sd\oDR^+_t$.

Let $\sdi: \sd \oDR_t^+ \to \oDR_t^+$ be a (set-theoretic) section of
the surjection $\sd: \oDR^+_t\onto \sd\oDR^+_t$ (we use the same
notation as for the similar map in \S\ref{ss:negper}).  The choice of
$\sdi$ will not affect anything below, and we need it only to write
formulas which are independent of this choice.

\begin{proposition}\label{inverse} 
The inverses of the isomorphisms \eqref{pim} and
\eqref{pi-}
 of Proposition \ref{p:ext-cyc} are  induced respectively
by the maps
$$
\ker\bigl(\sd|_{\oDR}\bigr)\to H(\oDR_t\o R,\ \si_t-u\sd)
\quad\text{\em and}\quad \ker\bigl(\si_t|_{\oDR_t/\sd\oDR_t}\bigr) \to
H(\oDR_t\o R,\ \si_t-u\sd)
$$
given by the assignments
\[
f\ \longmapsto\ u^m(\Id - u^{-1}\sdi \si_t)^{-1}(f) \quad\text{\em
  and}\quad f\ \longmapsto\ (\Id - u^{-1}\sdi \si_t)^{-1}(f).
\]
\end{proposition}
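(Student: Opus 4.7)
The plan is to verify three properties that jointly characterize an inverse of an isomorphism: that the proposed formulas give well-defined elements of $\oDR_t \otimes R$, that the resulting elements are cycles for $\si_t - u\sd$, and that $\pi_m$ or $\pi_-$ applied to them recovers $f$. Since Proposition \ref{p:ext-cyc} already establishes that $\pi_m$ and $\pi_-$ are isomorphisms on the relevant homology groups, these checks force the constructed cycles to represent the unique preimages. As a bonus this also yields independence from the choice of section $\sdi$ automatically, bypassing any direct homotopy argument.

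First I would establish that the series $(\Id - u^{-1}\sdi\si_t)^{-1}(f) = \sum_{k\geq 0} u^{-k}(\sdi\si_t)^k f$ is a finite sum in $\oDR_t \otimes R$. The operator $\sdi\si_t$ lowers de Rham degree by $2$ and raises $t$-degree by $1$, so the iterates vanish once the de Rham degree drops below zero. The subtle point is that applying $\sdi$ at each stage requires $\si_t(\sdi\si_t)^k f \in \sd\oDR_t^+$, the domain on which $\sdi$ is defined. I would establish this by induction on $k$, using $\sd\sdi = \Id$ on $\sd\oDR_t^+$ together with $\sd\si_t = -\si_t\sd$: one verifies $\sd(\sdi\si_t)^{k+1} f = \si_t(\sdi\si_t)^k f$, whence $\sd\si_t(\sdi\si_t)^k f = -\si_t\sd(\sdi\si_t)^k f$ vanishes, either via $\si_t^2 = 0$ for $k \geq 1$, or via the standing hypothesis on $f$ for $k = 0$ (namely $\sd f = 0$ in case \eqref{pim}, or $\si_t f \in \sd\oDR_t^+ \subseteq \ker\sd$ in case \eqref{pi-}). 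Acyclicity of $(\oDR_t^+,\sd)$ from Lemma \ref{qis0} then forces $\si_t(\sdi\si_t)^k f$ to lie inside $\sd\oDR_t^+$, as required.

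Next, setting $g := (\Id - u^{-1}\sdi\si_t)^{-1}(f)$, the defining identity $g - u^{-1}\sdi\si_t g = f$ rearranges to $\sdi\si_t g = u(g-f)$. Applying $\sd$ and using $\sd\sdi = \Id$ (justified by the previous step) yields $\si_t g = u\sd g - u\sd f$, i.e.\ $(\si_t - u\sd)g = -u\sd f$. In case \eqref{pi-}, this boundary term vanishes in $\oDR_t \otimes R$ because $u \cdot 1 = 0$ inside $R = \bk(\!(u)\!)/u\bk[\![u]\!]$. In case \eqref{pim}, $\sd f = 0$ by assumption, so $(\si_t - u\sd)g = 0$ outright; multiplying by the indicated power of $u$ preserves the cycle property, as $\si_t$ and $\sd$ are $\bk[u]$-linear. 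Finally, compatibility with $\pi_m$ and $\pi_-$ is immediate by inspection: the relevant coefficient of $u$ in the constructed series is $f$ itself, and the subsequent reduction modulo $t$ (resp.\ modulo $\sd\oDR_t$) acts trivially on $f$. The main obstacle is the inductive verification in the second paragraph; once iteration of $\sdi$ is justified, the rest is routine algebra.
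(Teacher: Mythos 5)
Your overall strategy is the same as the paper's own proof: verify by induction (using acyclicity of $(\oDR^+_t,\sd)$) that the iterates of $\sdi\si_t$ are defined, check the cycle property, and then use that $\pi_m$, $\pi_-$ are already known isomorphisms (Proposition \ref{p:ext-cyc}) to upgrade a right inverse at the level of cycles to a two-sided inverse on homology, which gives independence of the choice of $\sdi$ for free. Your treatment of the $\pi_-$ case is correct and matches the paper.

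The case \eqref{pim} has a genuine problem. The paper's proof only remarks that this case is ``similar,'' and once you actually write it out, the final ``immediate by inspection'' step fails. Working in $\oDR_t\o R$, the displayed formula gives $u^m(\Id-u\inv\sdi\si_t)\inv(f)=\sum_{k\geq m}(\sdi\si_t)^kf\cdot u^{-(k-m)}$, whose coefficient of $u^{-m}$ is $(\sdi\si_t)^{2m}f$. That element has $t$-degree $2m>0$, so $\pi_m$ returns $0$, not $[f]$; the expression also has internal degree $\ell-3m$ rather than $\ell-m$, so it cannot lie in $\oEHC_\ell(A,m)$ at all. The exponent in the statement should be $-m$: with $u^{-m}\sum_{k\geq 0}u^{-k}(\sdi\si_t)^kf=\sum_{k\geq 0}(\sdi\si_t)^kf\cdot u^{-(k+m)}$ the $u^{-m}$-coefficient is $f$ and the internal degree is $\ell-m$ as required. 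After this correction your $\bk[u]$-linearity argument for the cycle property no longer applies as stated, because multiplication by $u^{-1}$ on $R=\bk\dpu/u\bk\su$ is \emph{not} $\bk[u]$-linear ($u$ has only a one-sided inverse on $R$). Either expand the sum directly --- the boundary contribution at $k=0$ is $\sd f\cdot u^{-(m-1)}$, which vanishes because $\sd f=0$ --- or carry out the multiplication by $u^{-m}$ in $\oDR_t\dpu$, where $u$ acts invertibly and hence commutes with the differential, and push forward along the chain-map projection $\oDR_t\dpu\onto\oDR_t\o R$.
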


\begin{remark} In view of \eqref{e:connes-seq-maps}, the map $\sdi
  \si_t$ on $\ker\bigl(\si_t|_{\oDR_t/\sd\oDR_t}\bigr)$ may be thought
  of as an extended analogue of the periodicity operator $S$.  Via
  Theorem \ref{t:heq}, it actually becomes $S$ after making the above
  identification.
 \end{remark}

\begin{proof} 
  The statement of the proposition is essentially a simple consequence
  of the construction of differentials in the spectral sequence of a
  double complex.  Below, we explain the case of the map $\pi_-$. The
  case of the maps $\pi_m,\ m>0,$ is similar (and even easier).

  Let $\ker(\sd\si_t)$ be the kernel of the map $\sd\ccirc\si_t:\
  \oDR_t\to\oDR_t$.  We claim first that $\sdi \si_t$ is a
  well-defined set-theoretical endomorphism of $\ker(\sd\si_t)$. To
  see this, observe that $\si_t(\oDR_t)\sset \oDR_t^+$.  Hence, thanks
  to acyclicity of the complex $(\oDR_t^+, \sd)$, see Lemma
  \ref{qis0}, we deduce that an element $x\in\oDR_t$ belongs to
  $\ker(\sd\si_t)$ if and only if $\si_t x =\sd y$, for some $y\in
  \oDR_t^+$. Thus, for such an $x$, the element $\sdi \si_t x=\sdi\sd
  y$ is indeed well-defined.  Furthermore, we compute $\sd\si_t(\sdi
  \si_t x)=-\si_t\sd (\sdi \si_t x)= -(\si_t)^2(x)=0$. Thus, we have
  proved that $\sdi \si_t x\in \ker(\sd\si_t)$.

  The operator $\sdi\si_t$ on $\ker(\sd \si_t)$ has de Rham degree
  $-2$, and hence $\Id-u^{-1}\sdi\si_t$ is invertible.  Explicitly, an
  inverse is given, for every $x\in \ker(\sd\si_t)$, by the formula
  $(\Id - u^{-1}\sdi \si_t)^{-1}(x)=\sum_{k\geq0} \ u^{-k}(\sdi
  \si_t)^k(x)$, where the terms $(\sdi \si_t)^k(x)$ vanish for all $k$
  greater than half the de Rham degree of $x$.  Further, using that
  $\sd\sdi =\Id$, we compute
\begin{multline*}
  \qquad (\si_t-u\sd)(\Id - u^{-1}\sdi \si_t)^{-1}
  =(\sd\sdi\si_t-u\sd)(\Id - u^{-1}\sdi \si_t)^{-1}\\=
  u\sd(u^{-1}\sdi\si_t-\Id)(\Id - u^{-1}\sdi \si_t)^{-1}= -u\sd.
  \hskip15mm
\end{multline*}

We see that for every $z$ in the image of the map $(\Id - u^{-1}\sdi
\si_t)^{-1}$, the element $(\si_t-u\sd)(z)$ belongs to $u\oDR_t$.  It
follows that $z\mmod u\oDR_t[\![u]\!]$ is a cycle in the complex
$(\oDR_t\o R,\ \si_t-u\sd)$.  Thus, we have constructed the map
\[
(\Id - u^{-1}\sdi \si_t)^{-1}:\ \ker(\sd\si_t) \too H(\oDR_t\o R,\
\si_t-u\sd).
\]

Now, it is immediate from the above construction that the above
constructed map is a right inverse to the map $\pi_-$ in the sense
that, for every $x\in \ker(\sd\si_t)$, the class of the element $x$ in
$H(\oDR_t/\sd\oDR_t,\ \si_t)$ equals $\pi_-(\Id - u^{-1}\sdi
\si_t)^{-1}(x)$.  By definition, one has
$$H(\oDR_t/\sd\oDR_t,\ \si_t)\
=\ \frac{\ker(\sd\si_t)}{\ker \sd+\im\si_t}.
$$
Therefore, for every $x\in \ker\sd+\im\si_t$, we must have $\pi_-(\Id
- u^{-1}\sdi \si_t)^{-1}(x)=0$.  Since $\pi_-$ is an isomorphism, by
Proposition \ref{p:ext-cyc}, we deduce that the map $(\Id - u^{-1}\sdi
\si_t)^{-1}$ takes $\ker\sd+\im\si_t$ to zero on homology. We conclude
that the map given in Proposition \ref{inverse} is well
defined. Moreover, it is a right inverse, and hence a two-sided
inverse, to the isomorphism $\pi_-$.
\end{proof}

\section{Proof of Theorem \ref{t:heq}}\label{heq_pf}
\subsection{} 
First of all, thanks to equation \eqref{ddtH}, $\frac{d^q}{dt^q}
\ccirc \EE^q = \EE^0$.  The maps $\EE^0$ in \eqref{hee} are
isomorphisms by Theorem \ref{HHHC} together with Lemma \ref{cq}.  It
follows that, for every $q\geq0$, the map $\EE^q$ is injective.
Therefore, we only have to show that $\EE^q$ is surjective.

To this end, we introduce a filtration on $\oDR_t$ by the degree in
$A$: precisely, the degree $\leq m$ part of the filtration is spanned
by monomials in $t$, $A$, and $\sd\! A$ with the elements of the
latter two appearing at most $m$ times.  This descends to filtrations
under the surjections $\Om_t \onto \oOm_t \onto \oDR_t$.  The
differentials $\si_t$ and $\sd$ preserve the resulting filtration. So
we obtain a filtered bicomplex $(\oDR_t, \sd, \si_t)$.

It is immediate to check that $\gr (\oDR_t, \sd, \si_t)$, the
associated graded bicomplex, is naturally isomorphic to $(\oDR_t A',
\sd, \si_t)$, where $A' := \oA \oplus \bk$ is the algebra with the
trivial multiplication $\oA \cdot \oA = 0$. Let $\EE'$ denote the
Tsygan map for the algebra $A'$. Clearly, one has that $\EE'=\gr\EE$.

Thus, we have reduced the proof of the theorem to

\begin{proposition}\label{p:heq}
  The maps $\ (\EE')^q:\ H(\oOm A', \msb) \to H(\oDR_t A',
  \si_t)^{\hdot,q}\ $ and $\ (\EE')^q:\ H(\oOm A'/\msB\oOm A', \msb)
  \to H(\oDR_t A' / \sd \oDR_t A', \si_t)^{\hdot,q}\ $ are both
  surjective for every $q\geq0$.
\end{proposition}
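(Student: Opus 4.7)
My plan is to proceed by induction on $q$. The base case $q=0$ follows from Theorem \ref{HHHC} applied to $A'$: by Remark \ref{0rem}, the two maps $(\EE')^0$ agree, on the relevant cohomology, with the isomorphisms \eqref{hhdef}--\eqref{hcdef} for $A'$, so in particular they are surjective.

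For the inductive step, assume both versions of $(\EE')^{q-1}$ are surjective, and take a class $[x]$ in $H(\oDR_t A', \si_t)^{\hdot, q}$ (resp.\ in $H(\oDR_t A' / \sd \oDR_t A', \si_t)^{\hdot, q}$). Since $\ddt$ commutes with both $\sd$ and $\si_t$, it descends to a map on cohomology, and by \eqref{ddtH} one has $\ddt \ccirc (\EE')^q = (\EE')^{q-1}$. By the inductive hypothesis, there is $[f]$ in $H(\oOm A', \msb)$ (resp.\ in $H(\oOm A' / \msB \oOm A', \msb)$) with $(\EE')^{q-1}[f] = \ddt[x]$. Replacing $[x]$ by $[x] - [(\EE')^q(f)]$, we may assume $\ddt[x] = 0$ on cohomology. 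Thus the induction reduces to the following claim: for the trivial algebra $A'$ and each $q \geq 1$, the operator $\ddt$ is injective on $H(\oDR_t A', \si_t)^{\hdot, q}$ and on $H(\oDR_t A' / \sd \oDR_t A', \si_t)^{\hdot, q}$.

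To prove this injectivity, the natural strategy is to construct a $t$-integration homotopy $\mathsf{J} : \oDR_t^{\hdot, q-1} A' \to \oDR_t^{\hdot, q} A'$ that is a one-sided right inverse to $\ddt$ (up to a nonzero rational scalar depending only on $q$) and that commutes with $\si_t$ modulo $\si_t$-boundaries; a further compatibility with $\sd$ is needed to handle the quotient $\oDR_t / \sd \oDR_t$. On a monomial one takes $\mathsf{J}$ to be a suitable cyclically symmetric sum over insertions of a single additional $t$, normalized so that $\ddt \ccirc \mathsf{J}$ is a scalar. The point is that in the associated graded algebra $A'$, where $\oA \cdot \oA = 0$, the contraction $\bi_t$ simplifies drastically: it replaces a single letter $\sd a$ by $[t,a]$, and then no secondary cancellations coming from multiplication of elements of $\oA$ occur. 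This rigidifies the combinatorics enough that the standard averaging formula, modeled on the homotopy used in the equivariant de Rham Poincar\'e lemma, has a chance of being genuinely chain-homotopic to the identity.

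The hard part is precisely the construction of $\mathsf{J}$ and verification of its compatibilities. One must pin down the signs and coefficients so that $\ddt \ccirc \mathsf{J} = c_q \cdot \Id$ with $c_q \neq 0$, and so that the commutator $[\si_t, \mathsf{J}]$ is an explicit $\si_t$-coboundary; the second version, for the quotient, additionally requires that $\mathsf{J}$ pass to the quotient by $\sd \oDR_t$. Once $\mathsf{J}$ exists with these properties, a class $[x]$ with $\ddt[x] = 0$ at the chain level can be written as $\ddt \mathsf{J}[x] = c_q [x]$ and hence $[x] = 0$, completing the induction. I would expect the detailed combinatorial bookkeeping in setting up $\mathsf{J}$—rather than any conceptual obstruction—to be the principal challenge, and to rely in an essential way on the vanishing of products in $\oA$ afforded by the reduction to $A'$.
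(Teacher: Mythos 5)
Your reduction — use the surjectivity of $(\EE')^{q-1}$ together with $\ddt\circ(\EE')^q=(\EE')^{q-1}$ to reduce to the injectivity of $\ddt$ on $H(\oDR_t A',\si_t)^{\hdot,q}$ for $q\geq 1$ — is logically valid. But notice that it does not actually simplify the problem. Since you have already established that $(\EE')^{q}$ is injective (via $\frac{d^q}{dt^q}\circ(\EE')^q=(\EE')^0$), the injectivity of $\ddt$ on $H^{\hdot,q}$ and the surjectivity of $(\EE')^q$ are \emph{equivalent} statements, not the latter a corollary of a more elementary version of the former: if $(\EE')^q$ is onto then every class is $(\EE')^q[f]$, and $\ddt(\EE')^q[f]=(\EE')^{q-1}[f]$ vanishes iff $[f]=0$ by the injectivity of $(\EE')^{q-1}$. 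So you have reformulated the proposition, not reduced it.

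The plan you outline for showing $\ddt$-injectivity has a genuine gap. First, a sign of it is notational: you announce $\mathsf{J}$ as a \emph{right} inverse with $\ddt\ccirc\mathsf{J}=c_q\Id$ but then use it as if it were a \emph{left} inverse ($\ddt[x]=0\Rightarrow c_q[x]=\mathsf{J}\ddt[x]=0$). What you actually need is $\mathsf{J}\ddt = c_q\Id$ up to a chain homotopy; i.e., $\mathsf{J}\ddt = c_q\Id + \si_t H + H\si_t$ on $\oDR_t^{\hdot,q}$. But unwinding what this homotopy identity says on the kernel of $\ddt$: if $\ddt x=0$ and $\si_t x=0$, then $c_q\,x=-\si_t H(x)$, i.e., every cocycle in $t$-degree $q\geq 1$ annihilated by $\ddt$ is a $\si_t$-coboundary. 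That is \emph{precisely} the injectivity statement you set out to prove. So the existence of a $\mathsf{J}$ with the required properties \emph{is} the theorem, and there is no reason to expect it to be produced by a naive cyclic averaging of $t$-insertions; the off-diagonal terms one is left with after trying $\ddt\ccirc(\text{average insertion})$ are not boundaries for free. You correctly flag this as ``the hard part,'' but in the present form there is no argument there, only an analogy with the equivariant de Rham Poincar\'e lemma whose force in this noncommutative setting is exactly what is in question.

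For contrast, the paper does not attempt any $t$-integration. It further reduces $A'$ to the two-dimensional algebra $A_\tau=\bk\oplus\bk\tau$ with $\tau^2=0$, identifies $\oDR_t A_\tau$ with the cyclic-group coinvariants of an explicit complex $C_\ell$ built from the module $M\subset\bk\langle s,t\rangle/(s^2)$ (Lemmas \ref{l:babg0}, \ref{l:babg}, \ref{zeta}), and then \emph{computes} $H(C_\ell,\partial^0+\partial^1)$ and $H(C_\ell/d_sC_\ell,\partial^0+\partial^1)$ directly. The computation passes through a ``harmonic'' decomposition obtained from a positive self-adjoint operator $[\tilde D,\tilde\partial^0]$ and the linear-algebra Lemmas \ref{l:tp}--\ref{l:gc}, yielding explicit bases for the homology (Lemmas \ref{l:hcl-baby}, \ref{l:hcl}, \ref{page2}) spanned by the elements $\Upsilon_{q,\ell}^\epsilon$, which are visibly images of $\EE'$ by Lemma \ref{zeta}(ii). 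This is a substantially different and more concrete strategy than the homotopy you are proposing, and in particular it sidesteps the issue you run into: it never needs to produce a chain-level inverse to $\ddt$.
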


The rest of \S\ref{heq_pf} is devoted to the proof of this
proposition. The idea of the proof is to reduce the result further to
the special case where $\oA=\bk$ (and to lift from cyclic tensors to
ordinary tensors).  In this case, we compute the cohomology of the
complexes $(\oDR_t A', \si_t)$ and $(\oDR_t A' / \sd \oDR_t A',
\si_t)$ explicitly.  More precisely, we will define, for each
$\ell\geq 1$, a complex $C_\ell$ equipped with an action of $\Z/\ell$,
the cyclic group.  We identify the complex $(\oDR_t A', \si_t)$ with
$\bigoplus_{\ell \geq 1} (C_\ell)_{\Z/\ell}$, a direct sum of
complexes of coinvariants of cyclic groups. We then compute explicitly
the homology of each complex $C_\ell$.  The computation is, however,
quite technical; it will not be used elsewhere in the paper.

\subsection{Linear algebra lemmas}
\label{s:linalg} We begin with a few elementary lemmas from linear
algebra.

Let $V$ be a vector space over ${\mathbb R}$ equipped with a positive
definite inner product $\langle -,-\rangle:\ V\times V \to {\mathbb
  R}$.  Recall that a linear operator $T: V\to V$ is called
\emph{positive} if it is self-adjoint and $\langle Tv, v \rangle \geq
0$ for all $v\in V$. For a positive operator $T$ and $v\in V$, the
equation $\langle Tv, v \rangle=0$ implies that $Tv=0$.

\begin{lemma} \label{l:tp} Suppose that $T_0, \ldots, T_k$ are
  positive operators on vector spaces $V_0, \ldots, V_k$. Let $T :=
  \sum_{i=0}^k \Id^{\otimes i} \otimes T_i \otimes \Id^{\otimes k-i}$.
  Then $T$ is positive, and $\ker(T) = \bigotimes_{i=0}^k \ker(T_i)$.
\end{lemma}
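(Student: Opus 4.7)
The plan is to verify positivity by a direct sum argument, then use the spectral theorem for self-adjoint operators to identify the kernel.

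First I would check that $T$ is positive: each summand $S_i := \Id^{\otimes i} \otimes T_i \otimes \Id^{\otimes k-i}$ acts on the tensor product $V := V_0 \otimes \cdots \otimes V_k$ (with the induced inner product). Since $T_i$ is self-adjoint and the identity is self-adjoint, $S_i$ is self-adjoint. For a pure tensor $v = v_0 \otimes \cdots \otimes v_k$, one has $\langle S_i v, v\rangle = \langle T_i v_i, v_i\rangle \prod_{j \neq i} \|v_j\|^2 \geq 0$, and by a standard polarization/orthonormal-basis argument this extends to all of $V$; so $S_i$ is positive. A sum of positive operators is positive, so $T$ is positive.

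For the kernel, the inclusion $\bigotimes_i \ker(T_i) \subseteq \ker(T)$ is immediate since each $S_i$ vanishes on this subspace. For the reverse inclusion, suppose $Tv = 0$. Then $\langle Tv, v\rangle = \sum_{i=0}^k \langle S_i v, v\rangle = 0$, and since each $\langle S_i v, v\rangle \geq 0$, each term vanishes. By positivity of $S_i$, this forces $S_i v = 0$ for every $i$.

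The key step is then to show that $\bigcap_i \ker(S_i) = \bigotimes_i \ker(T_i)$. Here I would use the spectral theorem: choose an orthonormal basis $\{e^{(i)}_j\}_j$ of $V_i$ consisting of eigenvectors of $T_i$ with eigenvalues $\lambda^{(i)}_j \geq 0$. Then $\{e^{(0)}_{j_0} \otimes \cdots \otimes e^{(k)}_{j_k}\}$ is an orthonormal basis of $V$, and writing $v = \sum_{\vec j} c_{\vec j}\, e^{(0)}_{j_0} \otimes \cdots \otimes e^{(k)}_{j_k}$, one computes $S_i v = \sum_{\vec j} c_{\vec j} \lambda^{(i)}_{j_i}\, e^{(0)}_{j_0} \otimes \cdots \otimes e^{(k)}_{j_k}$. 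The condition $S_i v = 0$ for all $i$ thus forces $c_{\vec j} \lambda^{(i)}_{j_i} = 0$ for all $i, \vec j$, i.e., whenever $c_{\vec j} \neq 0$ one has $e^{(i)}_{j_i} \in \ker(T_i)$ for every $i$. This exhibits $v$ as a linear combination of pure tensors lying in $\bigotimes_i \ker(T_i)$.

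There is no real obstacle here beyond being careful that the diagonalization argument is valid; the whole content is the spectral theorem applied factor by factor. If one wishes to avoid choosing a basis, the same conclusion follows by noting that $S_i$ preserves the orthogonal decomposition $V = V_0 \otimes \cdots \otimes \ker(T_i) \otimes \cdots \otimes V_k \ \oplus \ V_0 \otimes \cdots \otimes \ker(T_i)^\perp \otimes \cdots \otimes V_k$ and acts as a positive \emph{invertible} operator on the second summand, so $\ker(S_i) = V_0 \otimes \cdots \otimes \ker(T_i) \otimes \cdots \otimes V_k$; intersecting over $i$ yields the desired identification.
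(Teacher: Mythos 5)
Your proof is correct and follows essentially the same route as the paper: each summand $S_i$ is positive, hence $T$ is positive, and then $\langle Tv,v\rangle=0$ forces $S_i v=0$ for every $i$, whence $v\in\bigotimes_i\ker(T_i)$; you simply fill in the details (the spectral-theorem/eigenbasis computation, or equivalently the orthogonal-decomposition argument at the end) that the paper leaves implicit when it asserts $\bigcap_i\ker(S_i)=\bigotimes_i\ker(T_i)$. One small caveat: the word ``polarization'' is out of place for establishing positivity of $S_i$ (positivity is not a bilinear condition, so it does not propagate from pure tensors by polarization), but your eigenbasis diagonalization --- or alternatively writing $T_i=B_i^\ast B_i$ so that $S_i=(\Id^{\otimes i}\otimes B_i\otimes\Id^{\otimes k-i})^\ast(\Id^{\otimes i}\otimes B_i\otimes\Id^{\otimes k-i})$ --- supplies the needed argument, so this is a wording issue rather than a gap.
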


\begin{proof}
  Clearly each $\Id^{\otimes i} \otimes T_i \otimes \Id^{\otimes k-i}$
  is positive, and so the sum is positive. Since the kernel of $T$ is
  the space of vectors $v$ such that $\langle Tv, v \rangle = 0$, it
  is the intersection of the kernels of $\Id^{\otimes i} \otimes T_i
  \otimes \Id^{\otimes k-i}$, i.e., $\bigotimes_{i=0}^k \ker(T_i)$.
\end{proof}

\begin{lemma} \label{l:pos-crit} Let $B = (b_{ij}) \in \Mat_n(\bR)$ be
  a symmetric $n\times n$-matrix such that $b_{ii} \geq \sum_{j \neq
    i} |b_{ij}|$ for all $i$.  Then $B$ is positive. Assume, in
  addition, that the graph obtained from $B$ with vertex set
  $\{1,\ldots,n\}$ and edges between $i$ and $j$ if $b_{ij} \neq 0$ is
  connected.  Then $\dim \ker(B) \leq 1$. This kernel is nonzero if
  and only if:
\begin{itemize}
\item The equality $b_{ii} = \sum_{j \neq i} |b_{ij}|$ holds for all $i$;
\item There exist signs $\lambda_i \in \{1,-1\}$
 such that $\sign(b_{ij}) = -\lambda_i \lambda_j$ for all $i \neq j$.
\end{itemize}
Then the kernel is spanned by the vector $(\lambda_1, \ldots,
\lambda_n)^\top$.
\end{lemma}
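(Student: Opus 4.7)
The plan is to bound $\langle Bv,v\rangle$ from below by a nonnegative quantity by combining the identity
\[
\langle Bv,v\rangle \;=\; \sum_i b_{ii}\,v_i^2 + 2\sum_{i<j} b_{ij}\, v_iv_j
\]
with the elementary AM--GM inequality $2|v_iv_j|\le v_i^2+v_j^2$. For every pair $i\neq j$ one has $b_{ij}v_iv_j\ge -|b_{ij}|\cdot\tfrac12(v_i^2+v_j^2)$; summing over $i\neq j$ and rearranging in a single index yields
\[
\langle Bv,v\rangle \;\ge\; \sum_i\Big(b_{ii}-\sum_{j\neq i}|b_{ij}|\Big)\,v_i^2 \;\ge\; 0,
\]
the last step being the diagonal-dominance hypothesis. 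This proves that $B$ is positive.

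Since $B$ is self-adjoint and positive, $Bv=0$ if and only if $\langle Bv,v\rangle=0$, so I would analyze the three equalities needed above for a fixed $v\in\ker(B)$. Equality in the last step forces $v_i=0$ at any vertex $i$ where the diagonal-dominance inequality is strict. Equality in the AM--GM bound, applied to each pair $(i,j)$ with $b_{ij}\neq 0$, forces $|v_i|=|v_j|$. Equality in $b_{ij}v_iv_j\ge -|b_{ij}||v_iv_j|$ with $b_{ij},v_i,v_j$ all nonzero forces $\sign(b_{ij})=-\sign(v_i)\sign(v_j)$.

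Now I invoke connectivity of the associated graph. Suppose some $v_{i_0}\neq 0$. Then every neighbour $j$ (i.e.\ $b_{i_0 j}\neq 0$) satisfies $|v_j|=|v_{i_0}|\neq 0$, hence is itself nonzero; propagating along a connected graph shows $|v_k|$ equals the common value $|v_{i_0}|$ for every vertex $k$. In particular every $v_k\neq 0$, so by the previous paragraph the diagonal-dominance equality $b_{kk}=\sum_{j\neq k}|b_{kj}|$ holds at every $k$, and the signs $\lambda_k:=\sign(v_k)$ satisfy $\sign(b_{ij})=-\lambda_i\lambda_j$ on every edge of the graph. Thus $v$ is a scalar multiple of $(\lambda_1,\ldots,\lambda_n)^\top$, proving $\dim\ker(B)\le 1$ and identifying the kernel when nonzero. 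Conversely, if the two bulleted conditions hold, direct substitution shows $B(\lambda_1,\ldots,\lambda_n)^\top=0$, since the $i$-th coordinate equals $b_{ii}\lambda_i+\sum_{j\neq i}b_{ij}\lambda_j=\lambda_i\bigl(b_{ii}-\sum_{j\neq i}|b_{ij}|\bigr)=0$.

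There is no real obstacle here beyond bookkeeping the three equality conditions simultaneously; the only conceptual step is the observation that the graph connectivity promotes a single nonzero coordinate into a global nonvanishing of $v$, which is what collapses the kernel to a line.
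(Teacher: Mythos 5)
Your proof is correct, and in two places it departs in small but genuine ways from the paper's argument. For positivity, you bound $\langle Bv,v\rangle$ from below via the AM--GM step $2b_{ij}v_iv_j\ge -|b_{ij}|(v_i^2+v_j^2)$, which cleanly handles negative off-diagonal entries; the paper instead writes out the exact identity $\langle Bv,v\rangle=\sum_{i<j}b_{ij}(v_i+v_j)^2+\sum_i\bigl(b_{ii}-\sum_{j\ne i}b_{ij}\bigr)v_i^2$, in which the first summand is not termwise nonnegative when some $b_{ij}<0$ --- the paper's identity effectively needs $|b_{ij}|(v_i+\sign(b_{ij})v_j)^2$ to make the nonnegativity of each summand manifest, so your version is in fact the more careful one. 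For the kernel analysis, the paper uses a discrete-maximum-principle argument: pick $i$ with $|v_i|$ maximal, deduce from the $i$-th equation $0=b_{ii}v_i+\sum_{j\ne i}b_{ij}v_j$ and diagonal dominance that $|v_j|=|v_i|$ for all neighbours $j$, then propagate. You instead use self-adjointness plus positivity to conclude that $Bv=0$ forces $\langle Bv,v\rangle=0$, and then read off the three equality conditions in your chain of inequalities. Both routes reach the same place by connectivity, and which one you prefer is largely a matter of taste: the paper's maximum argument is shorter and needs no positivity machinery, while your equality analysis is more systematic and makes explicit exactly which pairwise constraints $\sign(b_{ij})=-\sign(v_i)\sign(v_j)$ must hold. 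Your converse verification matches the paper's.
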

Here and below, we use the notation $(\lambda_1,\ldots,\lambda_n)^\top
= \begin{pmatrix} \lambda_1 \\ \vdots \\ \lambda_n \end{pmatrix}$,
where $\top$ stands for ``transpose.''

\begin{proof}
Let $v = (v_1, \ldots, v_n)^\top$. Then
\[
\langle Bv, v \rangle = \sum_{i < j} b_{ij}(v_i^2 + 2 v_i v_j + v_j^2)
+ \sum_i (b_{ii} - \sum_{j \neq i} b_{ij}) v_i^2 \geq 0.
\]
Hence $B$ is positive. Now, suppose $Bv = 0$ and $v \neq 0$.  Let $i$
be such that $|v_i| \geq |v_j|$ for all $j$.  Then $0 = b_{ii} v_i +
\sum_{j \neq i} b_{ij} v_j$ and $b_{ii} \geq \sum_{j \neq i} |b_{ij}|$
implies that the equality holds, and that $|v_j| = |v_i|$ whenever
$b_{ij} \neq 0$. Now assume that the graph of $B$ is connected. Then
we conclude that $|v_i|$ are all equal.  Hence the equality $b_{ii} =
\sum_{j \neq i} |b_{ij}|$ holds for all $i$.  Now, $Bv = 0$ if and
only if $\sign(b_{ij}) = - v_i / v_j$ whenever $i \neq j$.
\end{proof}
\begin{remark}
  Lemma \ref{l:pos-crit} can be significantly generalized: for
  arbitrary complex $B$ which is not necessarily symmetric, the
  condition $b_{ii} \geq \sum_{j \neq i} |b_{ij}|$ guarantees that the
  real parts of all eigenvalues of $B$ are nonnegative, and zero only
  for the zero eigenvalue. Let $Q(B)$ be a directed graph with vertex
  set $\{1,2,\ldots,n\}$ such that there is an edge from vertex $i$ to
  $j$ whenever $b_{ij} \neq 0$ and $i \neq j$.  If the graph $Q(B)$ is
  strongly connected (i.e., there exists a directed path from every
  vertex to every other vertex), then $\dim \ker(B) \leq 1$ and it is
  nonzero if and only if $b_{ii} = \sum_{j \neq i} |b_{ij}|$ for all
  $i$ and there exist complex numbers $\lambda_1, \ldots, \lambda_n$
  of absolute value $1$ such that $b_{ij} / |b_{ij}| =
  -\lambda_i/\lambda_j$ whenever $b_{ij} \neq 0$. In this case the
  kernel is spanned by $(\lambda_1, \ldots, \lambda_n)^\top$.
\end{remark}

\begin{lemma} \label{l:gc} Suppose that $(V^\idot,d)$ is a complex of
  vector spaces with $d: V^\idot \to V^{\idot+1}$. Suppose we are
  given an operator $h: V^\idot \to V^{\idot-1}$ such that
\[
V = \ker [h,d]\ \bplus\  \im [h,d], \quad \ker d = \ker([h,d]|_{\ker d})
\ \bplus\  \im([h,d]|_{\ker d}).
\]
(E.g., this holds if $[h,d]$ acts semisimply on $V$ or its base change
to an algebraically closed field.)  Then, the inclusion $(\ker[h,d],
d|_{\ker[h,d]}) \to (V, d)$ is a quasi-isomorphism.
\end{lemma}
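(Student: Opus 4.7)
The plan is to set $L := [h,d] = hd+dh$ and show that, thanks to the two direct sum hypotheses, the inclusion $(\ker L, d) \hookrightarrow (V,d)$ is a quasi-isomorphism by proving that the complementary subcomplex $(\im L, d)$ is acyclic.

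The first step is the observation that $L$ commutes with $d$, since
\[
[L, d] = [hd + dh,\, d] = hd^2 + dhd - dhd - d^2 h = 0.
\]
This immediately implies that both $\ker L$ and $\im L$ are stable under $d$ (for $\im L$: if $v = Lw$ then $dv = dLw = Ldw \in \im L$). Combined with the first hypothesis $V = \ker L \oplus \im L$, we obtain a direct sum decomposition of complexes $(V,d) = (\ker L, d) \oplus (\im L, d)$, so it suffices to prove that $(\im L, d)$ is acyclic.

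For acyclicity, let $v \in \im L$ satisfy $dv = 0$. The second hypothesis, applied to $v \in \ker d$, gives a decomposition $v = v_0 + Lw$ with $v_0 \in \ker L \cap \ker d$ and $w \in \ker d$. Since $Lw \in \im L$ and $v \in \im L$, we get $v_0 \in \ker L \cap \im L = 0$; hence $v = Lw = (hd + dh)w = d(hw)$, using $dw = 0$. This shows $v$ is a boundary in $V$, but we need a primitive inside $\im L$. To fix this, decompose $hw = a + Lb$ with $a \in \ker L$ and $b \in V$ using the first hypothesis. Then
\[
v = d(hw) = da + dLb = da + L(db),
\]
where the second equality uses $[L,d]=0$. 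Since $L$ commutes with $d$, the element $da$ again lies in $\ker L$, while $L(db) \in \im L$. Reading $v = da + L(db)$ through the decomposition $V = \ker L \oplus \im L$, and recalling $v \in \im L$, we conclude $da = 0$ and therefore $v = L(db) = d(Lb)$ with $Lb \in \im L$, as needed.

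The main potential obstacle I anticipate is the last maneuver: the naive primitive $hw$ need not lie in $\im L$, and it is exactly the first hypothesis (used a second time, after the second hypothesis has already been used to produce $w$) that lets us correct $hw$ by an element of $\ker L$ while keeping $v$ unchanged up to a boundary. Finally, the parenthetical remark about semisimplicity is essentially immediate: if $L$ acts semisimply (possibly after base change, which does not affect (quasi-)isomorphy of inclusions of complexes of vector spaces), then on any $L$-stable subspace $W$ one has $W = \ker(L|_W) \oplus \im(L|_W)$, applied to $W = V$ and $W = \ker d$ (the latter being $L$-stable because $[L,d]=0$), giving both hypotheses at once.
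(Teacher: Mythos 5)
Your proof is correct and follows essentially the same route as the paper: decompose $(V,d)$ into the direct sum of subcomplexes $(\ker L, d)\oplus(\im L, d)$ and show the second summand is acyclic by using the second hypothesis to identify its cycles with $L(\ker d)$. The only difference is at the very end: where you correct the primitive $hw$ by a $\ker L$--component to land in $\im L$, the paper leaves this implicit, relying on the fact that an element of a direct summand subcomplex that is a boundary in the ambient complex is automatically a boundary in the summand (since homology respects direct sums of complexes); both resolutions of that small gap are fine.
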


\begin{proof}
  Since $(V,d) = (\ker [h,d],\, d|_{\ker [h,d]}) \ \bplus\ (\im
  [h,d],\, d|_{\im[h,d]})$ is a direct sum of complexes, we only need
  to show that the second factor is acyclic.  Since $\ker(d) =
  \ker(d)|_{\ker[h,d]} \oplus \ker(d)|_{\im[h,d]}$, and the first
  factor equals $\ker([h,d]|_{\ker d})$, the assumption implies that
  $\ker(d)|_{\im[h,d]} = \im([h,d]|_{\ker d}) = \im(dh|_{\ker d})$.
  So every cycle in $(\im [h,d], d|_{\im[h,d]})$ is a boundary.
\end{proof}
\begin{remark}
  More generally, in Lemma \ref{l:gc} we could replace $\ker[h,d]$ and
  $\im[h,d]$ by $\ker[h,d]^N$ and $\im[h,d]^N$ for every $N \geq
  1$. In particular, if $N \geq 1$ is such that $V = \bigoplus_{i \in
    \Z} V_i$ is a graded vector space with $\dim V_i \leq N$ for all
  $i$, then the conditions of the lemma will necessarily be satisfied
  with this modification.
\end{remark}

In particular, Lemma \ref{l:gc} applies when $V^\idot$ is a
finite-dimensional complex of rational vector spaces and $[h,d]$ is a
positive operator.

\subsection{An auxiliary complex} Our ultimate goal is to define a
complex $C_\ell$ and compute its homology, as well as that of a
certain quotient $C_\ell/d_s C_\ell$; here we begin, as an
intermediate step, with a somewhat simpler complex.  Consider the
supercommutative algebra $S := \bk[s]/(s^2)$, with $s$ odd, and the
left $S$-module
\[
M := \Span(1, s, t^i, s t^i, t^i s, s t^i s \mid i \geq 1)\
 \subset\ \bk\langle s,t \rangle / (s^2).
\]
Assign $M$ the grading in which $1$ is odd, $s$ is even, and for all
$i \geq 1$, $t^i$ and $st^i s$ are even and $st^i$ and $t^i s$ are
odd. (In other words, we modify the natural grading from the algebra
$\bk\langle s,t \rangle / (s^2) \supset M$ by shifting the degree of
the submodule $S \subset M$.) Moreover, equip $M$ with the right
$S$-module structure $\cdot_R$,
\[
(f) \cdot_R 1 = f, \quad (f) \cdot_R s = -fs,
\]
which differs from the natural multiplication from the algebra
$\bk\langle s,t \rangle / (s^2)$ by a sign.

From now until the end of \S\ref{ss:cllhom}, we will use simplified
notation $\o=\o_{_S}$.  For convenience, equip $T_S M$ with a
partially defined multiplication,
\[
(f_1 \otimes \cdots \otimes f_i) \cdot (g_1 \otimes \cdots \otimes
g_j) = f_1 \otimes \cdots \otimes f_i g_1 \otimes \cdots \otimes g_j,
\]
where here $f_i g_1$ is the multiplication in the ring $\bk\langle s,t
\rangle / (s^2)$ (not using $\cdot_R$). This is defined whenever $f_i
g_1 \in M$. Further, put $1_m := 1^{\otimes m} = 1\otimes
1\otimes\cdots \otimes 1$ ($m$ factors).

For each $\ell\geq 1$,
we define a triple, $\tilde d_s, \tilde \partial^0,
\tilde \partial^1$, of differentials on $M^{\otimes \ell}$, as
follows.  First, we set
\begin{align*}
  \tilde d_s &:\ 1_k\ \mto\ s \cdot 1_{k}\ \mto\ 0,\\
  \tilde \partial^0(1_{k}) &:= \sum\nolimits_{j=1}^{k-2}\ (-1)^{j-1}
  (1_{j} \otimes t \otimes
  1_{k-j-1}), \\
  \tilde \partial^0(s \cdot 1_{k}) &:= \sum\nolimits_{j=1}^{k-2} \
  (1_{j} \otimes (st-ts)
  \otimes 1_{k-j-1}), \\
  \tilde \partial^1(1_{k}) &:= 0, \quad \tilde \partial^1(s \cdot
  1_{k}) := t \otimes 1_{k-1} + (-1)^k 1_{k-1} \otimes t.
\end{align*}

Now, let $\tilde d$ stand for one of the three symbols $\tilde d_s,
\tilde \partial^0, \tilde \partial^1$.  Then, using the above
formulas, for every $m \geq 1$ and $h_0, \ldots, h_n \in \{1_k, s
\cdot 1_k \mid k \geq 1\}$, and $i_1, \ldots, i_n \geq 1$, we define
the corresponding differential by the formula
\[
\tilde d(h_0 t^{i_1} h_1 \cdots t^{i_n} h_n) = \sum_{j=0}^n (-1)^{|h_0
  t^{i_1} h_1 \cdots t^{i_{j-1}} h_{j-1}|}\cdot h_0 t^{i_1} h_1 \cdots
t^{i_{j-1}} h_{j-1} t^{i_j} \tilde d(h_j) t^{i_{j+1}} \cdots t^{i_n}
h_n.
\]

The meaning of the above differentials is explained by Lemma
\ref{l:babg0} below (which we will not actually need, but we will need
the generalization in Lemma \ref{zeta} below). This says that the
differentials $\tilde d_s$ and $\tilde \partial^0+\tilde \partial^1$
may be obtained by transporting the natural differentials $\sd$ and
$\bi_t$ by means of an appropriate isomorphism.  In more detail, let
$A_\tau=A'_\tau=\bk\oplus\bk\cdot\tau$ be a $\bk$-algebra with one
generator $\tau$ such that $\ta^2=0$.

\begin{lemma}\label{l:babg0}
The assignment 
\[\zeta:\en t\mto t,\quad s^\epsilon\cdot 1_\ell\mto 
\ta^\epsilon\,(\sd\ta)^{\ell-\epsilon}:=\,\ta^\epsilon\,
\underbrace{\sd\ta\,\sd\ta\ldots\sd\ta}_{\text{$\ell-\epsilon\ $
    times}},\quad \epsilon\in\{0,1\},\en\ell=1,2,\ldots
\]
yields an isomorphism of bicomplexes
\[\zeta:\en \left(\bigoplus\nolimits_{\ell\geq1}\ M^{\o\ell},\ d_s,\
  \tilde \partial^0+\tilde \partial^1\!\right) \ \iso\
\bigl(\oOm_t(A_\ta),\ \sd,\,\si_t\bigr).\]
\end{lemma}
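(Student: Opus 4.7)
The plan is to verify that $\zeta$ is a well-defined linear bijection and that it intertwines the two pairs of differentials on the two sides.

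First I would check well-definedness of $\zeta$ on $\bigoplus_{\ell \geq 1} M^{\otimes \ell}$. Defining $\zeta$ on the generators $s^\epsilon \cdot 1_\ell$ and $t$ and extending it to arbitrary monomials $h_0 t^{i_1} h_1 \cdots t^{i_n} h_n$ by the product $\zeta(h_0) t^{i_1} \zeta(h_1) \cdots t^{i_n} \zeta(h_n)$ in $\oOm_t(A_\tau)$, the key check is that this descends to the tensor product over $S$. The identifications needed, namely $f \otimes_S (s \cdot g) = (f \cdot_R s) \otimes_S g$ (with the sign in $\cdot_R$) together with $s^2 = 0$, correspond under $\zeta$ to the relations $\sd\tau \cdot \tau = -\tau \sd\tau$ and $\tau^2 = 0$ in $\oOm_t(A_\tau)$. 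The first is a consequence of $\sd(\tau^2) = 0$ by the Leibniz rule, and the second is a defining relation of $A_\tau$.

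Second I would prove bijectivity by exhibiting matching bases. A basis of $M^{\otimes \ell}$ is given by words $h_0 t^{i_1} h_1 \cdots t^{i_n} h_n$ with $h_j = s^{\epsilon_j} \cdot 1_{k_j}$, $\epsilon_j \in \{0,1\}$, $k_j \geq 1$, $\sum_j k_j = \ell$, and $i_1, \ldots, i_n \geq 1$. The subspace of $\oOm_t(A_\tau)$ in which $\tau$ and $\sd\tau$ appear a total of $\ell$ times has the analogous basis of normal forms $\tau^{\epsilon_0}(\sd\tau)^{k_0 - \epsilon_0} t^{i_1} \cdots t^{i_n} \tau^{\epsilon_n}(\sd\tau)^{k_n - \epsilon_n}$, obtained by using $\tau^2 = 0$ and $\sd\tau \cdot \tau = -\tau \sd\tau$ to move each $\tau$ to the left of its block. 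By construction $\zeta$ maps the source basis bijectively to the target basis, separately at each $\ell$.

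Third I would verify the intertwining of differentials. Since the source differentials $\tilde d_s, \tilde \partial^0, \tilde \partial^1$ are extended from their values on the generators by the super-Leibniz formula in the text, and the target operations $\sd$ and $\si_t$ (viewed via $\bi_t$ on $\oOm_t$) are super-derivations annihilating $t$, it suffices to verify $\zeta \circ \tilde d_s = \sd \circ \zeta$ and $\zeta \circ (\tilde \partial^0 + \tilde \partial^1) = \si_t \circ \zeta$ on the generators $1_k$ and $s \cdot 1_k$. For $\sd$, one checks $\sd((\sd\tau)^k) = 0$ and $\sd(\tau(\sd\tau)^{k-1}) = (\sd\tau)^k$, which match the prescribed action of $\tilde d_s$ on the corresponding generators. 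For $\si_t$, applying the graded Leibniz rule to $(\sd\tau)^k$ and $\tau(\sd\tau)^{k-1}$ using $\bi_t(\sd\tau) = \tau t - t\tau$ produces two types of terms: interior ones where the inserted $t$ lies strictly between two $\sd\tau$-factors (which, after reordering using $\sd\tau \cdot \tau = -\tau\sd\tau$, match $\tilde \partial^0$), and boundary ones where the inserted $t$ lies at the edge of the block next to an existing $\tau$ (which match $\tilde \partial^1$).

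The main obstacle is the sign bookkeeping in the third step: the signs in the super-Leibniz expansions on both sides, combined with the signs produced by $\sd\tau \cdot \tau = -\tau\sd\tau$ when one pushes a boundary $\tau$ past the intermediate $\sd\tau$'s, must combine to reproduce the explicit sign $(-1)^{j-1}$ in $\tilde \partial^0$ and $(-1)^k$ in $\tilde \partial^1$. This verification is straightforward but tedious, and is essentially the only nontrivial computation of the proof.
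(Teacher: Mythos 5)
Your overall plan follows the paper's own sketch: check well-definedness over the $\otimes_S$ relations, produce bijectivity by matching bases via the basis of $\Omega_t(A_\tau)$ coming from \eqref{aaa}, and verify the intertwining of differentials by checking on the generators $1_k$ and $s\cdot 1_k$ and extending by the super-Leibniz rule. That is exactly what the paper indicates (it just says the intertwining ``can be verified directly'').

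However, your third step contains a concrete mismatch that you should not have accepted as a match. You compute $\sd((\sd\tau)^k)=0$ and $\sd(\tau(\sd\tau)^{k-1})=(\sd\tau)^k$ and then claim these ``match the prescribed action of $\tilde d_s$ on the corresponding generators.'' But by the lemma's displayed assignment, the generator corresponding to $(\sd\tau)^k$ is $1_k$, and $\tilde d_s(1_k)=s\cdot 1_k\ne 0$; likewise the generator corresponding to $\tau(\sd\tau)^{k-1}$ is $s\cdot 1_k$, and $\tilde d_s(s\cdot 1_k)=0$, not $1_k$. So under the lemma's stated $\zeta$ the two differentials go in opposite directions: $\sd$ raises de Rham degree while $\tilde d_s$ sends the degree-$k$ form $(\sd\tau)^k$ to the degree-$(k-1)$ form $\tau(\sd\tau)^{k-1}$. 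Your computation is actually fine; what it secretly shows is that the correspondence must be $1_k\leftrightarrow\tau(\sd\tau)^{k-1}$ and $s\cdot 1_k\leftrightarrow(\sd\tau)^k$, i.e., the roles of $\epsilon$ and $1-\epsilon$ in the lemma's formula need to be interchanged — precisely the convention that appears in the degree-zero-in-$t$ formulas of Lemma~\ref{zeta} below, where $1_\ell\mapsto a_1\,\sd a_2\cdots\sd a_\ell$ and $s\cdot 1_\ell\mapsto \sd a_1\cdots\sd a_\ell$. As written, your sketch silently uses the corrected correspondence in the intertwining check while using the displayed (uncorrected) one in the basis-matching step, so the two halves of the argument are not working with the same map $\zeta$. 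You should fix the identification uniformly from the start, since otherwise neither the $\sd$-intertwining nor the $\bi_t$-intertwining (compare $\bi_t(\sd\tau)=[\tau,t]\ne 0$ against $(\tilde\partial^0+\tilde\partial^1)(s\cdot 1_1)=0$) will come out right on the generators.

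A secondary point worth spelling out: your extension ``$\zeta(h_0)\,t^{i_1}\,\zeta(h_1)\cdots$'' must be reconciled with the definition of the partial product on $T_S M$. As the paper defines it, the product $h_0\cdot t^{i_1}\cdot h_1$ merges $t^{i_1}$ with both adjacent tensor slots, so the element lies in $M^{\otimes(\sum k_j - n)}$, not $M^{\otimes\sum k_j}$. The bookkeeping $\sum k_j=\ell$ that your bijectivity argument relies on therefore requires a precise statement of how the word $h_0 t^{i_1} h_1\cdots$ is to be read as an element of $M^{\otimes\ell}$ (equivalently, how it relates to the normal-form basis \eqref{e:tsm-basis}). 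This is a genuine ambiguity you should not leave implicit, since the whole dimension count of the bijection hinges on it.
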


\begin{proof}[Sketch of Proof]
  The fact that the assignment of the lemma respects the differentials
  can be verified directly.  Further, isomorphism \eqref{aaa} shows
  that the set of elements of the form
  $\ta^\epsilon\,(\sd\ta)^{\ell-\epsilon}$ is a $\bk$-basis of $\oOm
  (A_\ta)$.  Hence, this set combined with the set of words in the
  elements of the form $t^i\,\ta^\epsilon\,(\sd\ta)^{\ell-\epsilon},\
  i\geq 1,$ gives a $\bk$-basis of $\oOm_t(A_\ta)$.  It follows easily
  that the map $\zeta$ is a bijection.  We note that the direct
  summand $M^{\o\ell}$ goes, via the bijection, to the subspace
  $[\oOm_t(A_\ta)]_{(\ell)}$ of $\oOm_t(A_\ta)$ spanned by the words
  in which $\tau$ appears $\ell$ times.
\end{proof}

\subsection{} We will need to compute the homology of the complex
$(M^{\otimes \ell}, \tilde \partial^0)$.

\begin{lemma}\label{l:hcl-baby}
  A basis of $H(M^{\otimes \ell}, \tilde \partial^0)$ as a free
  $\bk$-module is given by elements of the form
  \begin{equation} \label{e:hcm} s^{\epsilon_0} \otimes
    (st+ts)^{\otimes j_0} \otimes t^{i_1} s^{\epsilon_1} \otimes
    (st+ts)^{\otimes j_1} \otimes \cdots \otimes t^{i_k}
    s^{\epsilon_k} \otimes (st+ts)^{\otimes j_k},
\end{equation}
for $\epsilon_0, \ldots, \epsilon_k \in \{0,1\}$, $j_0, \ldots, j_k
\geq 0$, and $i_1, \ldots, i_k \geq 2$.
\end{lemma}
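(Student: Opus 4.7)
The plan is to apply Lemma~\ref{l:gc} with an explicit homotopy operator $h: M^{\otimes \ell} \to M^{\otimes \ell}$ chosen so that $\Delta:=[h,\tilde\partial^0]$ is a positive semisimple operator whose kernel coincides with the $\bk$-span of the asserted basis~\eqref{e:hcm}. The natural candidate for $h$ is the formal inverse of the two insertion patterns of $\tilde\partial^0$: since $\tilde\partial^0(1_k)=\sum_{j=1}^{k-2}(-1)^{j-1}(1_j\otimes t\otimes 1_{k-j-1})$ produces subpatterns of the form $1\otimes t\otimes 1$ inside a $1$-block of length $k$, and $\tilde\partial^0(s\cdot 1_k)$ produces analogous subpatterns involving $(st-ts)$, $h$ should undo each such insertion by merging the $t$ (or $st-ts$) back into the surrounding block, with sign $(-1)^{j-1}$ and coefficient chosen so that $h\tilde\partial^0(1_k)=(k-2)\cdot 1_k$ (and the analogous identity for $s\cdot 1_k$). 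This in particular gives $\Delta(1_k)=(k-2)\cdot 1_k$ on the ``variable blocks.''

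To verify the hypotheses of Lemma~\ref{l:gc}, I would decompose $M^{\otimes \ell}=\bigoplus_\sigma V_\sigma$ into $\Delta$-invariant finite-dimensional subspaces indexed by a \emph{shape} $\sigma$. The shape records the positions and exponents of the ``rigid'' tensor factors, namely those of the form $t^i$, $st^i$, $t^is$, $st^is$ with $i\geq 2$, which $\tilde\partial^0$ cannot produce; the intervening ``variable'' segments built from $\{1,s,t\}$ are the site of the action of $\Delta$. On each $V_\sigma$, the matrix of $\Delta$ is symmetric with nonnegative diagonal entries dominating the absolute values of the off-diagonal entries, and its graph of elementary contractions is connected; Lemma~\ref{l:pos-crit} then forces $\Delta|_{V_\sigma}$ to be positive with a one-dimensional kernel. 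Lemma~\ref{l:tp} handles the product structure when $\sigma$ contains several independent variable segments.

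By the sign-pattern criterion in Lemma~\ref{l:pos-crit}, the kernel vector on $V_\sigma$ is the unique ``symmetric'' combination of the monomials in $V_\sigma$, which after simplification is exactly the basis element of~\eqref{e:hcm} with shape $\sigma$; the symmetric combination $(st+ts)$ (rather than $(st-ts)$) appears because $(st-ts)$ is itself a boundary of $\tilde\partial^0(s\cdot 1_3)$, while $(st+ts)$ cannot be produced. Lemma~\ref{l:gc} then yields a quasi-isomorphism $(\ker\Delta,0)\hookrightarrow(M^{\otimes \ell},\tilde\partial^0)$, and since $\tilde\partial^0$ vanishes on $\ker\Delta$ (no insertable pattern remains), we conclude $H(M^{\otimes \ell},\tilde\partial^0)\cong\ker\Delta$; summing over shapes recovers~\eqref{e:hcm}. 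The main obstacle is the sign bookkeeping: one must verify that the interaction of the general derivation sign $(-1)^{|h_0 t^{i_1}\cdots h_{j-1}|}$ with the local signs of $\tilde\partial^0(1_k)$ and $\tilde\partial^0(s\cdot 1_k)$ yields the diagonal-dominance and connectivity hypotheses of Lemma~\ref{l:pos-crit} on each $V_\sigma$, especially near the endpoints of variable blocks where boundary parities and the $S$-tensor relations interact.
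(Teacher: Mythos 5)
Your plan reproduces the paper's proof essentially verbatim: the paper constructs an operator $\tilde D$, forms $\DDD := [\tilde D, \tilde\partial^0]$, decomposes $M^{\otimes\ell}$ into $\DDD$-invariant finite-dimensional subspaces indexed by the positions and exponents of the rigid $t^i$-factors (what you call shapes), invokes Lemma~\ref{l:tp} and Lemma~\ref{l:pos-crit} to see that $\DDD$ is positive with a one-dimensional kernel on each such space spanned by the corresponding element of~\eqref{e:hcm}, and then applies Lemma~\ref{l:gc}. So the strategy, including which three lemmas to invoke, is exactly right.

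The one place where your heuristic, taken literally, would fail is the choice of $h$. You describe $h$ as ``undoing each insertion,'' i.e.\ merging the inserted $t$ or $(st-ts)$ back into the surrounding block. On the $t$-insertion this is fine, but on the $st$ and $ts$ factors it is not: an operator that inverts the insertion $s\cdot 1_k \mapsto \sum_j \pm(1_j\otimes(st-ts)\otimes 1_{k-j-1})$ would send $st\mapsto s$ and $ts\mapsto 0$ (or some such asymmetric assignment), and then $[h,\tilde\partial^0]$ would not be self-adjoint in the monomial inner product, so Lemma~\ref{l:pos-crit} would have no purchase. The paper instead takes the \emph{symmetrized} assignment $\tilde D(st)=\tilde D(ts)=\tfrac12 s$ (so $\tilde D(st-ts)=0$, not $s$); this symmetry is precisely what makes the six expressions in~\eqref{e:tdfla3} symmetric and diagonally dominant. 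Relatedly, your explanation for why $(st+ts)$ rather than $(st-ts)$ appears in~\eqref{e:hcm} (``because $(st-ts)$ is a boundary'') is suggestive but is not the argument: in the paper that form drops out of the sign-pattern criterion of Lemma~\ref{l:pos-crit} applied to the explicit $2\times 2$ and larger blocks of $\DDD$. The verification of those six local formulas, which you rightly flag as ``the main obstacle,'' is in fact the entire technical content of the lemma; without it the claim that the kernel ``after simplification is exactly'' \eqref{e:hcm} is unsupported.
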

\begin{proof}
  We define an operation $\tilde D$ on $M^{\otimes \ell}$ resembling a
  derivation which takes $st$ and $ts$ to $\frac{1}{2} s$, $t$ to $1$,
  and everything else to zero (such a map cannot actually be a
  derivation, because the Leibniz rule on $st$ is not satisfied).
  Precisely, we define
\[
\tilde D(t^{i}) = \delta_{i,1}, \quad \tilde D(st^i) =
\half\delta_{i,1} s = \tilde D(t^i s), \quad \tilde D(st^is) = 0,
\]
then write
\[
\tilde D(1_{p} \otimes f \otimes 1_{q}) = (-1)^p \cdot 1_{p} \otimes
\tilde D(f) \otimes 1_{q},
\]
and finally put
\[
\tilde D(f_0 t^{i_1} f_1 \cdots t^{i_k} f_k) = \tilde D(f_0 t^{i_1}
f_1) t^{i_2} f_2 \cdots t^{i_k} f_k + (-1)^{|f_0 t^{i_1}|} f_0 t^{i_1}
\tilde D(f_1 t^{i_1} f_2 \cdots t^{i_k} f_k),
\]
where $f_0, \ldots, f_k \in T_S \langle 1,s \rangle \subseteq T_S M$
(i.e., each of these elements can be taken to be either $1_{j}$ for
some $j$, or $s \cdot 1_{j}$ for some $j$).

Note that both $\tilde D$ and $\tilde \partial^0$ preserve the
subcomplexes which are homogeneous of fixed degrees $\geq 2$ in $t$ in
specified tensor components: precisely, such a subcomplex is one
where, in components $1 \leq c_1 < \cdots < c_p \leq \ell$, $t$
appears with degrees $d_1, \ldots, d_p \geq 2$, and $t$ appears with
degree $\leq 1$ in all other components.

We are going to show that the operator $\DDD:=[\tilde D,
\tilde \partial^0]$ is close to an isomorphism, and that its kernel
includes quasi-isomorphically into $(M^{\o \ell},\tilde \partial^0)$.
To this end, we will compute
$\DDD$ explicitly.  

By the observation from two paragraphs above, this computation reduces
to the subspace $\langle 1, t, st, ts \rangle^{\o m}$. More precisely,
$\DDD$ satisfies the following rule
which
resembles the derivation condition:
\begin{equation}\label{e:tdder}
  \DDD(g_1 \o t^i \o g_2)
  = \DDD(g_1) \o t^i \o g_2
  + g_1 \o t^i \o \DDD(g_2), \quad \forall i \geq 2,\
  g_1, g_2 \in T_S M.
\end{equation}
Moreover, it satisfies another rule which resembles an order-two
differential operator condition:
\begin{multline} \label{e:tddiff} \DDD(g_1 \o t \o f \o t \o g_2) =
  \DDD(g_1 \o t \o f) \o t \o g_2 \\ + g_1 \o t \o \DDD(f \o t \o g_2)
  - g_1 \o t \o \DDD(f) \o t \o g_2.
\end{multline}
This reduces the computation to the six equations
\begin{align} 
  &{\rm {(1)}}\en\ \DDD(1_{m}) \ =\ m \cdot 1_{m}, \qquad {\rm
    {(2)}}\en\ \DDD(s \cdot 1_{m})\ =\ m s \cdot 1_{m},\nonumber
  \\
  &{\rm {(3)}}\en\ \DDD(1_{m} \o t \o 1_{n}) \ =\ (m+n+1) (1_{m} \o t
  \o 1_{n}),\nonumber
  \\
  &{\rm {(4)}}\en\ \DDD(1_{m} \o st \o 1_{n}) \ =\ (m+n+\frac{1}{2})
  1_{m} \o st \o 1_{n} - \half\sum_{j=0}^n 1_{m+j} \o ts \o
  1_{n-j}, \label{e:tdfla3}
  \\
  &{\rm {(5)}}\en\ \DDD(1_{m} \o ts \o 1_{n}) \ =\ (m+n+\frac{1}{2})
  \cdot 1_{m} \o ts \o 1_{n} - \half \sum_{j=0}^m 1_{m-j} \o st \o
  1_{n+j},\nonumber
  \\
  &{\rm {(6)}}\en\ \DDD(s \cdot 1_{m} \o t \o s \cdot 1_{n}) \ = \
  (m+n)s \cdot 1_{m} \o t \o s \cdot 1_{n} - \half \sum_{{0 \leq j
      \leq m+n}\atop{j \neq m}} s\cdot 1_{j} \o t \o s\cdot
  1_{m+n-j}.\nonumber
\end{align}

Next, we will show that $\DDD$ is positive and use this to compute its
kernel. More precisely, let us assume without loss of generality that
$\bk = \bQ$ (note that $T_S M$ is the tensor product of its $\bQ$-form
by $\bk$).  We equip $T_S M$ with the inner product induced by the
monomial basis
\begin{equation}\label{e:tsm-basis}
  s^{\epsilon_0} \cdot 1_{j_0} \o t^{i_1} s^{\epsilon_1} \o
  1_{j_1} \o \cdots \o t^{i_r} s^{\epsilon_r} \o 1_{j_r},
\end{equation}
for all $r\geq0$ and $\epsilon_0, \ldots, \epsilon_r \in \{0,1\}$,
$j_0, \ldots, j_r \geq 0$, and $i_1, \ldots, i_r \geq 1$, i.e., the
unique inner product for which this basis is orthonormal.

Next, for each $k\geq0$, we fix $i_1, \ldots, i_k \geq 2$, $j_0,
\ldots, j_k \geq 0$, and $p_0, \ldots, p_k \geq 0$ such that $p_r \leq
j_r$ for all $r$, and $q_0, \ldots, q_k \geq 0$ such that $q_r \leq
p_r+1$ for all $r$.  Then we define the $\DDD$-invariant subspace,
$V_{k,i_\idot, j_\idot, p_\idot, q_\idot}$, spanned by elements of the
form
\[
f_0 \o t^{i_1} \o f_1 \o \cdots 
\o t^{i_k} \o f_k,
\]
where, for all $r \in \{0,\ldots, k\}$, $f_r \in T_S^{j_r} \langle 1,
t, st, ts \rangle$, and it is homogeneous in $s$ and $t$ of degrees
$q_r$ and $p_r$, respectively.
\begin{claim} \label{cl:hcl-p}
\begin{itemize}
\item[(a)] The operator $\DDD$ is positive with kernel of dimension
  $\leq 1$ on $V_{k,i_\idot, j_\idot, p_\idot, q_\idot}$.
\item[(b)] This kernel is nonzero if and only if $p_r = j_r$
 and $q_r \in \{p_r, p_r+1\}$, for all $r$. In this case it is spanned
by the element \eqref{e:hcm}  for $\epsilon_r = q_r-p_r$ where 
$r=0,1,\ldots,k$.
\end{itemize}
\end{claim}
\begin{proof} 
  In view of Lemma \ref{l:tp} and \eqref{e:tdder}, it suffices to
  prove (a) and (b) in the case that $k=0$. We then denote for
  simplicity $j=j_0, p=p_0$, and $q=q_0$.  The fact that $\DDD$ is
  self-adjoint here is immediate from \eqref{e:tddiff} and
  \eqref{e:tdfla3}(1)--(6).  To show that it is positive, observe
  again from \eqref{e:tddiff}--\eqref{e:tdfla3}(5) that the symmetric
  matrix $B = (b_{ij})$ for $\DDD$ on $V_{0,i_\idot, j, p, q}$ in the
  monomial basis of \eqref{e:tsm-basis} satisfies $b_{ii} \geq \sum_{j
    \neq i} |b_{ij}|$ for all $i$.  Equality holds for all $i$ if and
  only if the condition of part (b) is satisfied: $p = j$ and $q \in
  \{p, p+1\}$, i.e., in those formulas \eqref{e:tdfla3}(3)--(6) which
  arise, always $m=n=0$, and also \eqref{e:tdfla3}(2) does not arise
  (and neither \eqref{e:tdfla3}(1)).

  Observe that the (undirected) graph obtained from $B$, with vertex
  set $1,\ldots, \dim V_{0,i_{\idot},j,p,q}$ and with edges between
  $i$ and $j$ if $b_{ij} \neq 0$, is connected. The claim then follows
  from Lemma \ref{l:pos-crit}.
\end{proof}

Now, we complete the proof of Lemma \ref{l:hcl-baby}.  We showed that
$\DDD$ is positive with kernel spanned by the elements
\eqref{e:hcm}. By Lemma \ref{l:gc}, this implies that $(T_S M,
\tilde \partial^0)$ is quasi-isomorphic to the subcomplex spanned by
the given elements. It is easy to check that $\tilde \partial^0$ has
zero differential restricted to this subcomplex, and that the elements
appearing in \eqref{e:hcm} are linearly independent.
\end{proof}

\subsection{The complex $\cll$}\label{Cl}
Write $[s, m] := s \cdot m - (-1)^{|m|} m \cdot_R s$, for $s \in S,\ m
\in M^{\otimes\ell}$.  We keep the notation $\o=\o_{_S}$ and put $\cll
:= M^{\otimes\ell}/[S, M^{\otimes \ell}]$.  We introduce a triple
$d_s, \partial^0$, and $\partial^1$, of differentials on $\cll$, as
follows.

The definition of the differentials breaks up into two cases: the case
of degree zero in $t$ and the case of positive degree in $t$,
respectively.  In the case of degree zero in $t$, we use the formulas
\begin{align*}
&d_s(1_\ell) = s \cdot 1_\ell, &&d_s(s \cdot 1_\ell) = 0, \\
&\partial^0(1_\ell) = 0, &&\partial^0(s \cdot 1_\ell) = 0, \\
&\partial^1(1_\ell) = 0, &&
\partial^1(s \cdot 1_\ell) = (1+(-1)^\ell) \cdot \sum_{0\leq j\leq \ell-1}
1_j \cdot t \cdot 1_{\ell-j}.
\end{align*}

In the case of positive degree in $t$, we observe that the
differentials $\tilde d_s$, $\tilde \partial^0$ and
$\tilde \partial^1$ on $M^{\o\ell}$, introduced in the previous
section, descend to well defined differentials $d_s$, $\partial^0$,
and $\partial^1$, respectively, modulo commutators with $s$.  This
gives the required differentials on $\cll = M^{\otimes\ell} /
[S,M^{\otimes \ell}]$.

Let the cyclic group $\bZ/\ell$ act on $\cll $ by cyclic permutations
of tensor components using the Koszul sign rule, in terms of the
$\bZ/2$ grading above.  We write $(\cll)_{\bZ/\ell}$ for the
corresponding space of $\bZ/\ell$-coinvariants.

The next lemma is a `cyclic counterpart' of Lemma \ref{l:babg0} (which
we will also not actually need).
\begin{lemma}\label{l:babg} The map $\zeta$, of Lemma \ref{l:babg0},
  descends to an isomorphism of bicomplexes
\[\zeta\cc:\en
\left(\bigoplus\nolimits_{\ell \geq 1}\ (\cll)_{\bZ/\ell},\ d_s,\
  \partial^0+\partial^1\!\right)\ \iso\ (\oDR_t A_\tau,\ \sd,\,
\si_t).
\]
\end{lemma}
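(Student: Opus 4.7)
The plan is to descend the bicomplex isomorphism $\zeta$ of Lemma \ref{l:babg0} to the cyclic quotients on both sides. Since that lemma already furnishes a $\bk$-linear bijection intertwining both pairs of anti-commuting differentials, I need only check that $\zeta$ identifies the kernel of the projection $\bigoplus_\ell M^{\otimes \ell} \onto \bigoplus_\ell (\cll)_{\bZ/\ell}$ with the commutator space $\overline{[\Om_t,\Om_t]}\subseteq \oOm_t A_\tau$.

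For the $[S,\cdot]$ relations, a direct computation yields $\zeta(s\cdot m)=\tau\cdot \zeta(m)$ and $\zeta(m\cdot_R s)=-\zeta(m)\cdot \tau$, so
\[
\zeta([s,m])\ =\ \tau\cdot\zeta(m)\ +\ (-1)^{|m|}\,\zeta(m)\cdot\tau.
\]
Since $\tau$ has de Rham degree $0$, the supercommutator $\tau\zeta(m)-\zeta(m)\tau$ lies in $[\Om_t,\Om_t]$; combining this with the identities $\tau^2=0$ and $\tau\,\sd\tau+\sd\tau\,\tau=0$ in $\Om A_\tau$, a short case analysis on the parity of $|m|$ and of the de Rham degree of $\zeta(m)$ places the above expression inside $\overline{[\Om_t,\Om_t]}$. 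For the $\bZ/\ell$-coinvariants, the cyclic generator corresponds under $\zeta$ to moving the leftmost ``$M$-block'' (with any adjacent $t$-power) cyclically to the right end of the monomial in $\Om_t A_\tau$; such a rearrangement is again a graded commutator and so vanishes in $\oDR_t A_\tau$.

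To show $\zeta\cc$ is a bijection, I would match bases explicitly: both $\bigoplus_\ell (\cll)_{\bZ/\ell}$ and $\oDR_t A_\tau$ admit a natural basis indexed by cyclic necklaces of $\tau$-like letters interleaved with positive powers of $t$, modulo $\tau^2=0$ and $\tau\,\sd\tau=0$ on the codomain side and the corresponding $[S,\cdot]$ plus $\bZ/\ell$ relations on the domain side. Compatibility with the differentials $d_s\leftrightarrow\sd$ and $\partial^0+\partial^1\leftrightarrow\si_t$ is inherited from Lemma \ref{l:babg0}, since each differential descends along both quotients by construction.

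The main obstacle will be the sign bookkeeping: the Koszul signs from the $M$-grading on the domain must align with the graded-commutator signs from the de Rham grading on the codomain, despite the two $\bZ/2$-gradings disagreeing on the basis elements $st^i$ and $t^is$ of $M$. Reconciling them requires the $[S,\cdot]$ relations and the $\bZ/\ell$ coinvariants to be used in tandem; neither quotient alone produces the needed identifications on the cyclic level.
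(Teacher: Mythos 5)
Your key computation is wrong. You claim a ``direct computation yields $\zeta(s\cdot m)=\tau\cdot\zeta(m)$,'' but testing this on $m=1_\ell$: by the definition of $\zeta$ in Lemma~\ref{l:babg0}, $\zeta(1_\ell)=(\sd\tau)^\ell$ and $\zeta(s\cdot 1_\ell)=\tau(\sd\tau)^{\ell-1}$, whereas $\tau\cdot\zeta(1_\ell)=\tau(\sd\tau)^\ell$. These have different de Rham degrees ($\ell-1$ versus $\ell$) and lie in different components of $\oOm_t A_\tau$. Under $\zeta$, left multiplication by $s$ does not correspond to multiplication by $\tau$; it replaces the leading $\sd\tau$ by $\tau$, a contraction-like operation that drops de Rham degree by one. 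The same issue affects your formula for $\zeta(m\cdot_R s)$. Since your derivation of $\zeta([s,m])=\tau\cdot\zeta(m)+(-1)^{|m|}\zeta(m)\cdot\tau$ rests on these formulas, the claim that $\zeta$ sends $[S,M^{\otimes\ell}]$ to ordinary graded commutators $[\tau,-]$ in $\Om_t A_\tau$ does not hold.

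The actual image $\zeta([S,M^{\otimes\ell}])$ is not a span of supercommutators with $\tau$, but rather of expressions of the form $(\sd\alpha)\cdot t\beta t\cdot\alpha' \,+\,(-1)^{|\alpha|+|\beta|}\alpha\cdot t\beta t\cdot(\sd\alpha')$, where $\alpha,\alpha'$ are words in $\tau,\sd\tau$. The paper's proof establishes that these vanish in $\oDR_t A_\tau$ via the implication~\eqref{imply}, which is \emph{not} a formal commutator identity: its proof uses $V\cdot V=0$ (i.e., $\tau^2=0$) in an essential way. In other words, the $[S,\cdot]$-relations correspond to genuinely new relations in $\oDR_t$ that come from $\tau^2=0$ rather than from supercommutation; your case analysis using $\tau^2=0$ and $\tau\sd\tau+\sd\tau\,\tau=0$ does not reach this, because it is applied to the wrong element. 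Your treatment of the $\bZ/\ell$-coinvariants is closer to correct (cyclic shift does correspond to a graded commutator), but that alone does not suffice: the argument for the $[S,\cdot]$ relations is the missing substance, and you must either reproduce the mechanism of~\eqref{imply} or find another route, which the current proposal does not do.
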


\begin{proof}[Sketch of Proof] The argument is based on the following
  general result.  Let $A$ be an arbitrary algebra, let $V\sset A$ be
  a $\bk$-submodule, and consider $\alpha=a_0 \sd\! a_1 ... \sd\! a_m$
  and $\alpha'=a'_0 \sd\! a'_1 ... \sd\!  a'_n$ for arbitrary elements
  $a_0, a_1,\ldots, a_m,\ a'_0, a'_1,\ldots, a'_n\in V$.  Then, the
  following implication holds:
\begin{equation}\label{imply}
  V\cdot V=0\quad\Rightarrow\quad
  \bigl((\sd\alpha)\cdot t\cdot \beta\cdot t\cdot \alpha' 
  +
  (-1)^{|\alpha|+|\beta|} \alpha\cdot t\cdot
  \beta\cdot t\cdot 
  (\sd\alpha')\bigr)\cc=0,\quad\forall \beta\in \oOm_t A.
\end{equation}
Now, equation $V\cdot V=0$ obviously holds in the case where
$A=A_\tau$ and $V=\bk\cdot\ta$. Further, it is straightforward to
verify that the map $\zeta$ sends the $\bk$-submodule $\bplus_\ell \
[S, M^{\o\ell}]$, of $\bplus_\ell \ M^{\o\ell}$, onto the
$\bk$-submodule of $\oOm_t(A_\ta)$ spanned by the elements of the form
$(\sd\alpha)\cdot t\cdot \beta\cdot t\cdot \alpha' +
(-1)^{|\alpha|+|\beta|} \alpha\cdot t\cdot \beta\cdot t\cdot
(\sd\alpha')$, which appear in \eqref{imply}.

Using this, and the cyclic symmetry of $\oDR_t A_\tau$, one shows that
the map $\zeta\cc$ is well-defined.  To see it is a bijection, we
first observe that $\oDR_t A_\tau$ can be presented as the quotient of
$\oOm_t A_\tau$ by the above relation, for $V = \bk \cdot \ta$, along
with the relation imposing cyclic symmetry.  Then, the result follows
from the above observations and the fact that $\zeta$ is a bijection.
\end{proof}

\subsection{}\label{compute} We now compute the homology of 
the complexes $(\cll, \partial^0)$ and $(\cll / d_s
\cll, \partial^0)$.

\begin{lemma} \label{l:hcl} \vi A basis for $H(\cll, \partial^0)$ as a
  free $\bk$-module consists of elements of the form
\begin{equation} \label{e:hcl}
(st+ts)^{\o j_0} \o t^{i_1} s^{\epsilon_1} \o (st+ts)^{\o j_1} \o
\cdots \o t^{i_k} s^{\epsilon_k} \o (st+ts)^{\o j_k},
\end{equation}
where $i_1, \ldots, i_k \geq 2$, $j_0, \ldots, j_k \geq 0$, and
$\epsilon_1, \ldots, \epsilon_k \in \{0,1\}$, together with the
elements, for all $k \in \{0, \ldots, \ell-1\}$,
and
$\epsilon \in \{0,1\}$,
\begin{equation}\label{e:hcl2}
  \sum_{{
      1 \leq r \leq k; j_0, \ldots, j_k \geq 0:}\atop
    {j_0 + \cdots + j_k = \ell-k}}
  (-1)^{(\epsilon-1)(j_0+j_1+\cdots+j_{r-1}+r-1)}\cdot
  1_{j_0} \otimes
  ts \otimes \cdots \otimes
  1_{j_{r-1}} \otimes t s^{\epsilon} \otimes \cdots
  \otimes
  1_{j_{k-1}}
  \otimes ts \otimes 1_{j_k}.
\end{equation}

\vii A basis for $H(\cll/ d_s \cll, \partial^0)$ as a free
$\bk$-module consists of elements of the form
\begin{equation} \label{e:hcld} (st+ts)^{\o j_0} \o t \o (st+ts)^{\o
    j_1} \o t s^{\epsilon_2} \o (st+ts)^{\o j_2} \o \cdots \o t
  s^{\epsilon_k} \o (st+ts)^{\o j_k},
\end{equation}
where $i_1, \ldots, i_k \geq 2$, $j_0, \ldots, j_k \geq 0$, and
$\epsilon_2, \ldots, \epsilon_k \in \{0,1\}$, together with, for all
$k \in \{0, \ldots, \ell-1\}$, the element
\begin{equation} \label{e:hcld2} \sum_{{ 1 \leq r \leq k; j_0, \ldots,
      j_k \geq 0:}\atop { j_0 + \cdots + j_k = \ell-k}}
  (-1)^{j_0+j_1+\cdots+j_{r-1}+r-1}\cdot 1_{j_0} \otimes ts \otimes
  \cdots \otimes 1_{j_{r-1}} \otimes t \otimes \cdots \otimes
  1_{j_{k-1}} \otimes ts \otimes 1_{j_k}.
\end{equation}
\end{lemma}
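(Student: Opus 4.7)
The plan is to adapt the strategy of Lemma \ref{l:hcl-baby} to the cyclic quotient $\cll = M^{\o\ell}/[S,M^{\o\ell}]$. One constructs an operator $D$ on $\cll$ analogous to $\tilde D$, checks that the commutator $[D,\partial^0]$ is self-adjoint and positive with respect to an inner product induced by a monomial basis, and shows that its kernel is spanned exactly by the elements listed in \eqref{e:hcl}. Part (i) then follows from Lemma \ref{l:gc} by the same mechanism as in the proof of Lemma \ref{l:hcl-baby}.

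First I would verify that $\tilde D$, possibly modified by a sign factor reflecting the Koszul rule governing $[S,\cdot]$, descends to a well-defined operator $D$ on $\cll$. I would then derive the analogues of the pseudo-derivation identity \eqref{e:tdder} and the pseudo-second-order identity \eqref{e:tddiff} for $[D,\partial^0]$; these will now pick up a ``wrap-around'' contribution from the identification $s\cdot m \sim \pm m\cdot_R s$ that brings the leftmost $s$ around to the rightmost position. This wrap-around term is the source of the cyclic-sum basis vectors \eqref{e:hcl2}.

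Next I would decompose $\cll$ as an orthogonal direct sum of $[D,\partial^0]$-invariant blocks indexed by the number of $t$-separators, their exponents $i_1,\ldots,i_k\geq 1$, the distribution of $1$- and $s$-factors between separators, and the total $s$-degree. Working without loss of generality over $\bQ$, on each block $[D,\partial^0]$ is represented in the monomial basis by a symmetric matrix satisfying the diagonal-dominance hypothesis of Lemma \ref{l:pos-crit}. Equality in the dominance inequality holds precisely in two families of blocks: those in which every exponent $i_r$ is at least $2$ and the $1$-factors are distributed as in \eqref{e:hcl}, and the new ``all-$t^1$'' blocks where every $i_r=1$. In the first family, Lemma \ref{l:pos-crit} produces a one-dimensional kernel spanned by the monomial \eqref{e:hcl}; in the second family, the sign pattern $(\lambda_1,\ldots,\lambda_n)^\top$ dictated by Lemma \ref{l:pos-crit} reproduces the signs $(-1)^{(\epsilon-1)(j_0+\cdots+j_{r-1}+r-1)}$ appearing in the cyclic sum \eqref{e:hcl2}. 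For part (ii), I would rerun the argument on the further quotient $\cll/d_s\cll$, using that $d_s$ anti-commutes with $\partial^0$ and that $D$ and $[D,\partial^0]$ descend through this quotient; passing to the quotient kills those basis vectors of part (i) that lie in the image of $d_s$, leaving precisely the basis \eqref{e:hcld}--\eqref{e:hcld2}.

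The principal obstacle will be the combinatorial bookkeeping around the cyclic quotient: $\tilde D$ is not literally compatible with $[S,M^{\o\ell}]$, so one must either replace it by a cyclically symmetrized version or carefully track its defect. Verifying diagonal dominance block by block, and in particular extracting the precise signs $\lambda_i$ that assemble into the cyclic sums \eqref{e:hcl2} and \eqref{e:hcld2}, is a delicate calculation; everything else should proceed in parallel with the model computation in Lemma \ref{l:hcl-baby}.
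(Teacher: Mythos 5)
Your plan follows the paper's argument essentially verbatim: it modifies $\tilde D$ by a wrap-around term to get a cyclic homotopy $D$ on $\cll$, verifies diagonal dominance block by block to invoke Lemma \ref{l:pos-crit}, correctly identifies the all-$t^1$ blocks as the source of the cyclic-sum vectors \eqref{e:hcl2}, and handles part (ii) by descending to $\cll/d_s\cll$. The one thing the paper addresses that you do not flag is that $(d_s\cll)^\perp$ is not preserved by $D$ and $\partial^0$ (so positivity on the quotient needs the observation that projection to $\cll/d_s\cll$ is what defines the induced operators), but this is a minor refinement of the same approach.
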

See the next subsection for an explanation of formulas \eqref{e:hcl2}
and \eqref{e:hcld2} (they will appear as the associated graded
expressions of \eqref{psi} with respect to the increasing filtration
by the degree in $s$, i.e., the number of times $s$ appears).

\begin{proof}
  To prove part (i), we will use the proof of Lemma
  \ref{l:hcl-baby}. The key step is to define a homotopy $D$ on $\cll$
  similar to the operator $\tilde D$. As before, assume $f_1, \ldots,
  f_k \in T_S \langle 1,s \rangle \subseteq T_S M$.
Then, for $i_1, \ldots, i_k \geq 1$ and $p_1,p_2 \geq 0$, define
\begin{multline*}
  D(1_{p_1} \o s^\epsilon t^{i_1} f_1 \cdots t^{i_{k-1}} f_{k-1}
  t^{i_k} \o 1_{p_2}) = \tilde D(1_{p_1} \o s^\epsilon t^{i_1} f_1)
  t^{i_2} f_2 \cdots t^{i_{k-1}}f_{k-1} t^{i_k} \o 1_{p_2} \\ +
  (-1)^{\epsilon \cdot\deg(t^{i_1} f_1 \cdots t^{i_{k-1}} f_{k-1}
    t^{i_k} \o 1_{p_2})}\cdot 1_{p_1} \o t^{i_1} \tilde D(f_1 \cdots
  t^{i_{k-1}} f_{k-1} t^{i_k} \o 1_{p_2} \cdot_R s^\epsilon),
\end{multline*}
and so that $D$ kills elements of degree zero in $t$ (elements of the
form $(f_1)\cc$ with $f_1$ as above). By definition, $D$ commutes with
the $\bZ/\ell$ action.

By the same argument as before, we can assume $\bk=\bQ$, and then $[D,
\partial^0]$ is a positive operator on $\cll$, viewed as an inner product
space with orthonormal basis
\begin{equation}\label{e:tsm-basis-cyc} 
  1_{j_0} \o
  t^{i_1} s^{\epsilon_1} \o 1_{j_2} \o \cdots
  \o t^{i_k} s^{\epsilon_k} \o 1_{j_k},
\end{equation}
for $\epsilon_1, \ldots, \epsilon_k \in \{0,1\}$, $j_0, \ldots, j_k
\geq 0$, and $i_1, \ldots, i_k \geq 1$.  The only difference comes in
computing the kernel, i.e., the statement of Claim \ref{cl:hcl-p}(b)
has to be modified (with the same proof, i.e., still using Lemma
\ref{l:pos-crit}).  Here, we just have to add also the case where
$k=0$, $p_0$ is arbitrary, and $q_0 \in \{p_0, p_0+1\}$. One can
verify that kernel in this case is spanned by the element
\eqref{e:hcl2} (note that in this formula, there is a different
parameter $k$, which need not be zero).

For part (ii), note that $D$ commutes with $d_s$, so it descends to an
operator on $\cll / d_s \cll$, which we also denote (abusively) by
$D$.  Since $[D,\partial^0]$ is positive on $\cll$ and preserves $d_s
\cll$, it is positive on $(d_s \cll)^\perp$.  Now, consider the
composition $(d_s \cll)^\perp \into \cll \onto \cll /d_s \cll$, which
is an isomorphism. We claim that it commutes with the actions of
$[D, \partial^0]$.  This will imply that $[D, \partial^0]$ is also
positive on $\cll / d_s \cll$.  The only subtlety here is that $(d_s
\cll)^\perp$ is not preserved by $D$ and $\partial^0$.  However, by
definition, the operators $\partial^0$ and $D$ on $\cll / d_s \cll$
are obtained from the projection $\cll \onto \cll/d_s \cll$, so the
claim follows.

As a result, $(\cll/d_s \cll, \partial^0)$ is quasi-isomorphic to the
subcomplex spanned by the projection of the subcomplex appearing in
part (b) to $\cll / d_s \cll$. This is evidently spanned by the
elements \eqref{e:hcld}--\eqref{e:hcld2}, since this is the collection
of elements from \eqref{e:hcl}--\eqref{e:hcl2} which do not project to
zero, and it is easy to see that these remaining elements
\eqref{e:hcld}--\eqref{e:hcld2} are linearly independent.  As before,
the differential $\partial^0$ is zero on this subcomplex, so the
result follows.
\end{proof}

\subsection{} \label{ss:cllhom} We continue to keep the notation
$\otimes=\otimes_{_S}$.  For each $\ell\geq1,\ q\geq0,$ and
$\epsilon\in\{0,1\}$, we consider the image in $\cll$ of the element
\begin{equation}\label{psi}
  \Upsilon^\epsilon_{q,\ell}\ :=\!
  \underset{{k \geq 0,\, 1 \leq r \leq k,\, i_1, \ldots, i_k \geq 1,\,
      j_0, \ldots, j_k \geq 0:}\atop
    {i_1 + \cdots + i_k
      = q,\, j_0 + \cdots + j_k = \ell-k}}
  \sum \hskip-6mm
  (-1)^{\nu_\epsilon(j_0,\ldots,j_{r-1})}\cdot
  1_{j_0} \otimes 
  t^{i_1}s \otimes \ldots\otimes 1_{j_{r-1}} \otimes
  t^{i_{r}}s^{\epsilon} \otimes \cdots \otimes
  1_{j_{k-1}} \otimes t^{i_k}s \otimes 1_{j_k},
\end{equation}
where
$\nu_\epsilon(j_0,\ldots,j_{r-1}):=(\epsilon-1)(j_0+\cdots+j_{r-1}+r-1)$.

\begin{lemma}\label{page2} \begin{enumerate}
  \item[(1)] For all $q \geq 0$ and all $\epsilon \in \{0,1\}$, we
    have that $\ (\partial^0 + \partial^1) \Upsilon^\epsilon_{q,\ell}
    = 0$.

  \item[(2)] Using the notation of Lemma \ref{l:hcl}, one has\vskip2pt

    \vi The homology $H(\cll, \partial^0 + \partial^1)$ is spanned by
    $\Upsilon^\epsilon_{q,\ell},\ q \geq 0,\, \epsilon \in \{0,1\}$.

    \vii The homology $H(\cll/d_s \cll, \partial^0 + \partial^1)$ is
    spanned by $\Upsilon^0_{q,\ell},\ q \geq 0$.
\end{enumerate}
\end{lemma}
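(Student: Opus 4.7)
The plan is to introduce the filtration $F_p\cll$ by $s$-degree (the number of $s$ symbols appearing) and run the associated spectral sequence. Inspection of the formulas defining $\tilde\partial^0$ and $\tilde\partial^1$ shows that $\partial^0$ preserves $s$-degree while $\partial^1$ lowers it by one. Write $\Upsilon^\epsilon_{q,\ell,k}$ for the piece of $\Upsilon^\epsilon_{q,\ell}$ with $k$ dividers of the form $t^{i_\rho} s^{?}$; this piece is $s$-homogeneous of degree $k-1+\epsilon$.

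For part~(1), I would grade the identity $(\partial^0+\partial^1)\Upsilon^\epsilon_{q,\ell}=0$ by $s$-degree, reducing it to the family of recursions
\[
\partial^0\Upsilon^\epsilon_{q,\ell,k} + \partial^1\Upsilon^\epsilon_{q,\ell,k+1} = 0,\qquad k\geq 1.
\]
Each recursion is a direct combinatorial pairing: the terms of $\partial^1\Upsilon^\epsilon_{q,\ell,k+1}$ arise from removing an $s$ from one of the $k+1$ dividers via $\tilde\partial^1(s\cdot 1_j) = t\o 1_{j-1} + (-1)^j 1_{j-1}\o t$, producing a new $t$ that either merges with the adjacent $t^{i_\rho}$ (promoting $t^{i_\rho}s$ to $t^{i_\rho+1}$) or slides into an adjacent block of $1$'s. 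These should biject exactly with the terms created by $\partial^0\Upsilon^\epsilon_{q,\ell,k}$, which inserts a $t$ or $st-ts$ into a $1$-block. The signs $(-1)^{\nu_\epsilon(j_0,\ldots,j_{r-1})}$ in the definition of $\Upsilon^\epsilon$ are tailored so that these terms cancel; the check reduces to a case analysis for $\epsilon\in\{0,1\}$.

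For part~(2), the first page of the spectral sequence is $E_1=H(\cll,\partial^0)$, computed in Lemma~\ref{l:hcl}(i) (resp.\ Lemma~\ref{l:hcl}(ii) for $\cll/d_s\cll$), and $d_1=\bar\partial^1$. I would analyze $\bar\partial^1$ on the basis: the generators \eqref{e:hcl} (containing $(st+ts)$-factors or $t^{i_r}$ with $i_r\geq 2$) are not cycles for $\bar\partial^1$ and generate boundary relations between the various partial $\Upsilon$-classes, whereas the generators \eqref{e:hcl2} match, up to sign, the maximal-$k$ components $\Upsilon^\epsilon_{q,\ell,\ell}$ with all $i_\rho=1$. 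The resulting $E_2$ page is thus spanned by the classes of the $\Upsilon^\epsilon_{q,\ell}$. Since part~(1) exhibits these as genuine $(\partial^0+\partial^1)$-cycles, no higher differential can kill them, so the spectral sequence collapses at $E_2$; the filtration is bounded in each bidegree, securing convergence. For part~(2)(ii), I would additionally verify that $\Upsilon^1_{q,\ell}\equiv d_s Y_{q,\ell}$ modulo $(\partial^0+\partial^1)$-coboundaries for an explicit $Y_{q,\ell}$ (intuitively, $d_s$ turns an $\epsilon=0$ divider into an $\epsilon=1$ divider plus lower-order corrections), so only the $\Upsilon^0_{q,\ell}$ survive in $H(\cll/d_s\cll,\partial^0+\partial^1)$.

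\textbf{Main obstacle.} The hardest step will be the sign bookkeeping in part~(1): the signs $(-1)^{\nu_\epsilon(j_0,\ldots,j_{r-1})}$ must correctly match the Koszul signs coming from the derivation rules for $\tilde\partial^0,\tilde\partial^1$ and from the $[S,\cdot]$-identification defining $\cll$, and these have to match uniformly across all $k$ simultaneously. A secondary technical point is the degenerate configurations in which some $j_\rho=0$, so that two dividers become adjacent; these fall outside the canonical $h_0\cdot t^{i_1}\cdot h_1\cdots$ form, and I would handle them either by an ad hoc extension of $\tilde\partial^0,\tilde\partial^1$ to an enlarged alphabet of $h$-pieces (allowing strings like $s\cdot t^{i_1}s\cdot t^{i_2}s\cdots$) or by working directly from the identification $\zeta$ of Lemma~\ref{l:babg} with $(\oDR_t A_\ta,\sd,\si_t)$, where the derivation $\si_t$ has an intrinsic (coordinate-free) definition.
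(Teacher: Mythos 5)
Your overall frame for part (2) matches the paper: filter by $s$-degree, get $E_1 = H(\cll,\partial^0)$ from Lemma \ref{l:hcl}, and study the induced differential. But the core of the paper's argument, and the place your proposal has a real gap, is the computation of $H(E_1, d_1=\bar\partial^1)$. You write that the generators \eqref{e:hcl} ``generate boundary relations'' and the $E_2$-page ``is thus spanned by the classes of the $\Upsilon^\epsilon_{q,\ell}$'', but this ``thus'' is doing all the work: you have not explained how to compute the kernel and image of $\bar\partial^1$ on the $E_1$-page, and this is genuinely difficult because the $E_1$-differential $\partial' = \ppr\ccirc\partial^1$ is not given by a closed formula on the basis \eqref{e:hcl}--\eqref{e:hcl2} (it involves the orthogonal projection back onto the span of that basis).

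The paper's way around this is a recursive self-similarity that your proposal does not mention. It constructs an explicit isomorphism $\psi_\ell: C_{\ell,q}\to C'_{\ell,q+\ell}$ (Claim \ref{c:psiell}; roughly, multiplication by $t^{\otimes\ell}$ with normalizing constants) from the degree-$q$-in-$t$ piece of $\cll$ with the differential $\partial^0+\partial^1$ to the degree-$(q+\ell)$-in-$t$ piece of $E_1$ with the differential $\partial'$, intertwining $d_s$ on both sides. This converts the unknown $E_1$-differential $\partial'$ in high $t$-degree into the original $\partial^0+\partial^1$ in lower $t$-degree, making the $E_2$-computation accessible by induction on $q$. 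The base case $q<\ell$ is the one your sketch actually handles: there the $E_1$-page is already spanned by \eqref{e:hcl2}, which is killed by $\partial'$ for elementary $s$-degree reasons, matching $\gr_s\Upsilon^\epsilon_{q,\ell}$. The inductive step then uses $\psi_\ell$ together with the identity $\psi_\ell(\Upsilon^\epsilon_{q,\ell}) = \gr_s\Upsilon^\epsilon_{q+\ell,\ell}$ \eqref{e:psielleqn}. Without something like $\psi_\ell$ your approach would need a direct hand computation of $H(E_1,\bar\partial^1)$, and you have not shown this is tractable. (Also, note that ``since part (1) exhibits these as genuine total cycles, no higher differential can kill them'' only shows $d_r$ vanishes \emph{on} those classes; to conclude collapse you still need to know these classes exhaust $E_2$, which is precisely the uncomputed piece.)

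Your plan for part (1) — grade $(\partial^0+\partial^1)\Upsilon^\epsilon_{q,\ell}=0$ by $s$-degree and check $\partial^0\Upsilon^\epsilon_{q,\ell,k}+\partial^1\Upsilon^\epsilon_{q,\ell,k+1}=0$ termwise — is essentially the ``explicit and straightforward computation'' the paper says it omits; that is fine. For part (2)(ii), the paper simply runs the same spectral-sequence argument on $\cll/d_s\cll$ using Lemma \ref{l:hcl}(ii); your alternative of exhibiting $\Upsilon^1_{q,\ell}$ as $d_s Y_{q,\ell}$ modulo coboundaries would be an additional burden of explicit formulas rather than a simplification.
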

\begin{proof} Part (1) is an explicit (and straightforward)
  computation, which we omit.

  To prove (2), we introduce an ascending filtration on $\cll$ by the
  degree in $s$ (a nonnegative integer).  This descends to a
  filtration on $\cll / d_s \cll$ as well.  The differentials
  $\partial^0$ and $\partial^1$ respect the filtration.  The
  associated graded complexes are
\begin{align}
  \gr_s(\cll,\ \partial^0 + \partial^1) &= (\gr_s \cll,\ \partial^0)
  \cong (\cll,\ \partial^0),\label{grs}\\
  \gr_s(\cll / d_s \cll,\ \partial^0 + \partial^1) &= (\gr_s(\cll /
  d_s \cll),\ \partial^0) \cong (\cll / d_s \cll,
  \ \partial^0).\nonumber
\end{align}

One has the standard spectral sequence associated with the filtered
complex $(\cll, \partial^{0} + \partial^1)$.  From the isomorphism in
\eqref{grs}, we see that the first page of the spectral sequence is
given by the homology of the complex $(\cll/ d_s \cll, \partial^{0})$.
This homology is described by Lemma \ref{page2}.  The second page is
obtained from the first page by taking the homology of the
differential induced by $\partial^1$.

We now prove (i); the proof of (ii) is similar.  Since $\partial^0$
and $\partial^1$ are homogeneous of degree one in $t$, we can prove
the statement in each fixed degree $q$ in $t$.  Let $C_{\ell,q}
\subset \cll$ be the subset of elements of degree $q$ in $t$.  The
proof rests on comparing $C_{\ell,q}$ with the part of the complex
$(H(\gr_s \cll, \partial^0), \partial^1)$ in degree $q+\ell$.

Let $(T_S^\ell M)_{q}$ denote the part of $T_S^\ell M$ of degree $q$
in $t$, and $(T_S^\ell M)_q'$ the subspace spanned by \eqref{e:hcm}.
Let $C'_{\ell,q} \subseteq H(\gr_s \cll,
\partial^0)$ be the subspace of degree $q$ in $t$, which is spanned by
the elements \eqref{e:hcl}--\eqref{e:hcl2}.  Let $C'_\ell =
\bigoplus_{q \geq 0} C'_{\ell,q}$. The differential $\partial'$ on
$\cll'$, in terms of the basis \eqref{e:hcl}--\eqref{e:hcl2}, is given
by $\partial' = pr \ccirc \partial^1$, where $\partial^1$ is given by
the usual formula on $\cll$, and $pr: \cll \to \cll'$ is the
orthogonal projection in the basis \eqref{e:tsm-basis-cyc}: more
precisely, one first projects to $\ker(\partial^0)$ orthogonally and
then takes the quotient by $\im(\partial^0)$; equivalently, one
projects orthogonally to the span of the elements
\eqref{e:hcl}--\eqref{e:hcl2}.  The reason why this is the
differential is that $\cll'$ is obtained from $\cll$ as the kernel of
the positive operator $[D,\partial^0]$, and so the formula for
$\partial'$ follows from the proof of Lemma \ref{l:gc} (namely, from
the fact that $\im([D,\partial^0])$, the orthogonal complement to the
span of \eqref{e:hcl}--\eqref{e:hcl2}, is an acyclic direct summand of
the complex $(C_\ell,\partial^0)$).

A key step is to construct a map $\psi_\ell$, essentially
multiplication by $t^{\otimes \ell}$ with some constants, from
$C_{\ell,q}$ to $C_{\ell,q+\ell}'$, which commutes with the
differential $d_s$, and sends $\partial^0+\partial^1$ to $\partial'$.
The precise definition of $\psi_\ell$ is given on \eqref{e:tsm-basis}
by
\begin{multline}
  \psi_\ell(s^{\epsilon_0} \cdot 1_{j_0} \otimes t^{i_1}
  s^{\epsilon_1} \otimes 1_{j_1} \otimes \cdots \otimes t^{i_k}
  s^{\epsilon_k} \otimes 1_{j_k}) \\ = ((j_0+1)^{-1}s)^{\epsilon_0}
  \cdot (st+ts)^{\otimes j_0} \otimes t^{i_1+1}
  ((j_1+1)^{-1}s)^{\epsilon_1} \otimes (st+ts)^{\otimes j_1} \otimes
  \\ \cdots \otimes t^{i_k+1} ((j_k+1)^{-1}s)^{\epsilon_k} \otimes
  (st+ts)^{\otimes j_k}.
\end{multline}
\begin{claim}\label{c:psiell}
  The map $\psi_\ell$ is an isomorphism $C_{\ell,q} \to
  C'_{\ell,q+\ell}$ for all $q \geq 0$ which intertwines $d_s$ with
  $d_s$ and intertwines $\partial^0+\partial^1$ on $\cll$ with
  $\partial'$ on $\cll'$.
\end{claim}
To prove the claim, one has to verify that $\psi_\ell$ is an
isomorphism, that it preserves $d_s$, and that it sends
$\partial^0+\partial^1$ to $\partial'$. These facts all follow from
straightforward explicit computations, which we omit.

We now complete the proof of Lemma \ref{page2}. We do this by
induction on $q$.  The base case is where $q < \ell$.  In this case,
$C_{\ell,q}'$ is already spanned by the elements $\gr_s
\Upsilon^\epsilon_{q,\ell},\ q\geq0, \epsilon \in \{0,1\}$, which are
exactly \eqref{e:hcl2} up to nonzero constant factors, and, moreover,
these elements are killed by $\partial'$.

For the inductive step, assume that, in degree $q$,
$H(\cll, \partial^0+\partial^1)$ is spanned by the elements
$\Upsilon^\epsilon_{q,\ell}$ for $\epsilon \in \{0,1\}$.  By Claim
\ref{c:psiell}, this implies that, in degree $q+\ell$,
$H(\cll',\partial')$ is spanned by
$\psi_\ell(\Upsilon^\epsilon_{q,\ell})$.  We claim that
\begin{equation}\label{e:psielleqn}
  \psi_\ell(\Upsilon^\epsilon_{q,\ell}) = \gr_s \Upsilon^\epsilon_{q+\ell,\ell}.
\end{equation}
Then, by the spectral sequence for $(\cll, \partial^0 + \partial^1)$,
this implies that the homology of $(\cll, \partial^0 + \partial^1)$ in
degree $q+\ell$ is spanned by the classes
$\Upsilon^\epsilon_{q+\ell,\ell}$, completing the induction.

It remains to prove \eqref{e:psielleqn}.  Clearly, the RHS is given by
restricting the sum in \eqref{psi} to the summands where $k=\ell$.  In
this case, each such summand is obtained from the summand of element
$\Upsilon^\epsilon_{q,\ell}$.  This proves \eqref{e:psielleqn}.
 \end{proof}

\subsection{Proof of Proposition \ref{p:heq}}
The proof amounts to a generalization of the preceding material from
$A_\tau$ to $A'$.  Here we revert to the notation
$\otimes=\otimes_\bk$ used before (and as we will use in the remainder
of the paper).

Below, we will use, for every $p,q\geq0$, a direct sum decomposition
$\oOm^{p,q}_tA'=\oplus_{\ell\geq1}\ [\oOm^{p,q}_tA']_{(\ell)}$, where
$[\oOm^{p,q}_tA']_{(\ell)}$ denotes the span of all elements in which
$\oA$ appears $\ell$ times.  This induces a similar decomposition
$\oDR^{p,q}_tA'=\oplus_{\ell\geq1} \ [\oDR^{p,q}_tA']_{(\ell)}$.
Also, let a superscript of $q$ denote the degree $q$ part in $t$.

For each $\ell\geq1,$ we define a map $\zeta:\ \overline A^{\otimes
  \ell} \otimes M^{\o_S\ell} \too [\oOm_t A']_{(\ell)}$ as follows. In
positive degree in $t$, the map is given, for every $f_1, \ldots,
f_\ell \in \{1, t^i, t^i s \mid i \geq 1\} \subset M$ not all equal to
$1$, and $a_1, \ldots, a_\ell \in \oA$, by the formula
\[
(a_1 \otimes \cdots \otimes a_\ell) \otimes (f_1 \otimes_{_S} \cdots
\otimes_{_S} f_\ell) \mapsto (f_1 \star a_1)(f_2 \star a_2) \cdots
(f_\ell \star a_\ell),
\]
where we put
\[
1 \star a = \sd\! a, \quad t^i \star a = t^i a, \quad t^i s \star a =
t^i \sd\! a, \quad i \geq 1.
\]

In degree zero in $t$, we use the formulas
$$
(a_1 \otimes \cdots \otimes a_\ell) \otimes (1_{\ell})\ \mapsto\ a_1
\sd\! a_2 \cdots \sd\! a_\ell, \qquad (a_1 \otimes \cdots \otimes
a_\ell) \otimes (s \cdot 1_{\ell})\ \mapsto\ \sd\! a_1 \sd\! a_2
\cdots \sd\! a_\ell.
$$

Recall next that the cyclic group $\bZ/\ell$ acts on $\cll$ by cyclic
permutations of tensor components using the Koszul sign rule. We let
$\bZ/\ell$ act on $\oA^{\otimes \ell}$ by ordinary cyclic permutations
and we equip $\overline A^{\otimes \ell} \otimes \cll$ with
$\bZ/\ell$-diagonal action.  Further, we extend each of the three
differentials $d= d_s,\partial^0$, and $\partial^1$ on $\cll$ to the
differentials $d_A:=\Id\o d$ on $\overline A^{\otimes \ell} \otimes
\cll$, where $\Id: \overline A^{\otimes \ell} \to \overline A^{\otimes
  \ell}$ is the identity.  Similarly, we extend the tilde versions
$\tilde d$ to $\tilde d_A:=\Id \o \tilde d$ on $\overline A^{\otimes
  \ell} \otimes M^{\o_S\ell}$.  Finally, for every $q\geq 0$, let
$(\overline A^{\otimes \ell} \otimes M^{\o_S\ell})^q$ and $(\overline
A^{\otimes \ell} \otimes \cll)_{\bZ/\ell}^q$ be the homogeneous
component of $\overline A^{\otimes \ell} \otimes M^{\o_S\ell}$ and
$(\overline A^{\otimes \ell} \otimes \cll)_{\bZ/\ell}$ of degree $q$
in $t$, respectively.
\begin{lemma}\label{zeta} For  $\ell\geq1$, 
\vi For every  $q\geq 0$, the map $\zeta$
induces isomorphisms of complexes
$$\bigl((\overline A^{\otimes \ell}
\otimes M^{\o_S\ell})^q ,\ d_{s,A}\bigr) \iso
\bigl([\oOm^{\hdot,q}_tA']_{(\ell)},\ \sd\bigr), \quad\text{\em
  and}\quad \bigl((\overline A^{\otimes \ell} \otimes
\cll)_{\bZ/\ell}^q ,\, d_{s,A}\bigr) \iso \bigl([\oDR^{\hdot,q}_t
A']_{(\ell)},\ \sd\bigr).
$$

\vii For $\epsilon\in\{0,1\}$ and $\ell\geq1$, put
$\Upsilon^\epsilon_\ell:= \sum_{q\geq 0} \frac{1}{(q+\ell-1)!}\cdot
\Upsilon^\epsilon_{q,\ell}$.  Then, for every $\alpha\in \overline
A^{\otimes \ell}$, the following diagram commutes:
$$\xymatrix{
  \bigl((\overline A^{\otimes \ell} \otimes M^{\o_S\ell})^0,
  \ \partial^0_A+\partial^1_A\bigr)
  \ar[d]^<>(0.5){\zeta}_<>(0.5){\cong}\ar[rrrr]^<>(0.5){\alpha\,\o\,
    s^\epsilon\cdot 1_{\ell} \,\ \mto\ \, \alpha\,\o\,
    \Upsilon^\epsilon_\ell} &&&& \ \bigl(\prod_{q\geq0}\ (\overline
  A^{\otimes \ell} \otimes
  \cll)_{\bZ/\ell}^q,\ \partial^0_A+\partial^1_A\bigr)
  \ar[d]^<>(0.5){\zeta}_<>(0.5){\cong}\\
  \bigl([\oOm A']_{(\ell)},\ \msb\bigr)\ \ar[rrrr]^<>(0.5){\EE'} &&&&\
  \bigl(\prod_{q\geq0}\ [\oDR^{\hdot,q}_tA']_{(\ell)},\ \si_t\bigr).
}
$$
\end{lemma}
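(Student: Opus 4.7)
The plan for part (i) is to generalize the arguments in the sketches of Lemmas \ref{l:babg0} and \ref{l:babg} from the one-generator algebra $A_\ta$ to $A'$. The crucial shared feature is that the augmentation ideal squares to zero ($\ta^2 = 0$ in $A_\ta$, and $\oA \cdot \oA = 0$ in $A'$). This forces every monomial in $\Om_t A'$ belonging to $[\oOm_t A']_{(\ell)}$ to have a normal form in which the $\ell$ letters from $\oA$ (some sitting under $\sd$) are separated from each other either by a $\sd$ or by a positive power of $t$, so that no two bare elements of $\oA$ can ever sit adjacent. Applying isomorphism \eqref{aaa} to the free product $A' *_\bk \bk[t]$ then yields a free $\bk$-basis for $[\oOm_t^{\hdot,q} A']_{(\ell)}$ indexed by a choice of $\ell$ elements of $\oA$ together with the combinatorial ``shape data'' recording which letters carry $\sd$ and where the powers of $t$ summing to $q$ appear. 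By inspection, this is the image under $\zeta$ of the monomial basis of $(\oA^{\otimes \ell} \otimes M^{\otimes_S \ell})^q$, so $\zeta$ is a bijection. Intertwining of $d_{s,A}$ with $\sd$ then reduces to the generator-level identities $1 \mapsto s$, $s \mapsto 0$, matching $a \mapsto \sd\! a$, $\sd\! a \mapsto 0$, while both sides kill $t$. For the cyclic quotient, I would invoke the implication \eqref{imply} used in the proof of Lemma \ref{l:babg}: since $V := \oA$ satisfies $V \cdot V = 0$, the kernel of the surjection $[\oOm_t A']_{(\ell)} \onto [\oDR_t A']_{(\ell)}$ is spanned by cyclic rotations together with those specific relations, which under $\zeta$ correspond respectively to the $\bZ/\ell$-action on $M^{\otimes_S \ell}$ and to the subspace $[S, M^{\otimes \ell}]$.

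For part (ii), I plan to compute both composites of the diagram explicitly on a generator $\alpha \otimes s^\epsilon \cdot 1_\ell$, with $\alpha = a_1 \otimes \cdots \otimes a_\ell \in \oA^{\otimes \ell}$. The lower-left route applies $\EE'$ to $\zeta(\alpha \otimes s^\epsilon \cdot 1_\ell) \in [\oOm A']_{(\ell)}$, which by formula \eqref{e:Tdefn} is a sum over $(k_0, \ldots, k_{\ell-1}) \in \ZZgez^\ell$ with coefficient $\frac{1}{(q+\ell-1)!}$, where $q = \sum_i k_i$. The upper-right route applies $\zeta$ to $\alpha \otimes \Upsilon^\epsilon_\ell = \sum_{q \geq 0} \frac{1}{(q+\ell-1)!}\, \alpha \otimes \Upsilon^\epsilon_{q,\ell}$. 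The scalar prefactors $\frac{1}{(q+\ell-1)!}$ match tautologically; the substance is the termwise identification of the Tsygan expansion with the cyclic sum \eqref{psi}. The Tsygan formula enumerates monomials starting from a fixed position, whereas $\Upsilon^\epsilon_{q,\ell}$ is a genuinely cyclic sum in which the index $r$ records the position of the exceptional letter (undotted if $\epsilon = 0$, equal to the generic dotted letter if $\epsilon = 1$). Using the cyclic symmetry of $\oDR_t$ (which is automatic after passing to the commutator quotient), each Tsygan monomial can be rotated into the canonical form whose exceptional letter sits at a chosen position; these canonical representatives reproduce, one by one, the summands of $\Upsilon^\epsilon_{q,\ell}$ after collecting terms with shared positions of the positive powers of $t$.

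The principal obstacle will be sign bookkeeping in part (ii). The sign $(-1)^{(\epsilon-1)(j_0 + \cdots + j_{r-1} + r - 1)}$ appearing in \eqref{psi} must be extracted from the Koszul sign accumulated when cyclically rotating each Tsygan monomial so that its exceptional letter occupies position $r$; concretely, this is a parity count of the odd-graded letters ($\sd\!a$ factors and $s$-markers) being moved past each other, and one checks that $\nu_\epsilon(j_0, \ldots, j_{r-1}) = (\epsilon - 1)(j_0 + \cdots + j_{r-1} + r - 1)$ is precisely what this count produces. The case $\epsilon = 1$ has trivial sign and reduces to a straightforward match, so the essential work concentrates in the case $\epsilon = 0$, where every dotted letter to the left of the exceptional position contributes a sign. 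Apart from this combinatorial check, the rest of the argument is a direct structural adaptation of the proofs of Lemmas \ref{l:babg0} and \ref{l:babg} together with an unpacking of the definitions of $\EE'$ and $\Upsilon^\epsilon_\ell$.
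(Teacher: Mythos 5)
Your plan for part (i) is exactly the paper's: the authors' sketch says precisely that one generalizes Lemmas~\ref{l:babg0} and~\ref{l:babg} to $A'$, noting that implication~\eqref{imply} applies because $\oA\cdot\oA=0$ in $A'$. Your normal-form analysis and use of~\eqref{imply} for the cyclic quotient is a correct fleshing-out of that sketch. For part (ii), the paper also says it is a ``direct computation using the explicit formula for the Tsygan map and for the map $\zeta$,'' so your overall plan again matches.

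However, a concrete step in your outline for part (ii) is wrong, and it affects the structure of the needed argument, not just a sign. You assert that, for both $\epsilon\in\{0,1\}$, applying $\EE'$ to $\zeta(\alpha\otimes s^\epsilon\cdot 1_\ell)$ gives ``a sum over $(k_0,\ldots,k_{\ell-1})\in \ZZgez^\ell$ with coefficient $\frac{1}{(q+\ell-1)!}$,'' and then conclude that ``the scalar prefactors $\frac{1}{(q+\ell-1)!}$ match tautologically.'' This is only true for $\epsilon=0$. For $\epsilon=1$, the element $\zeta(\alpha\otimes s\cdot 1_\ell)=\sd\!a_1\cdots\sd\!a_\ell$ has de~Rham degree $\ell$ (not $\ell-1$), so in the Tsygan formula~\eqref{e:Tdefn} one reads it as $a_0\sd\!a_1\cdots\sd\!a_n$ with $a_0=1$ and $n=\ell$; the sum therefore runs over $(k_0,\ldots,k_\ell)\in\ZZgez^{\ell+1}$ with coefficient $\frac{1}{(q+\ell)!}$, one extra $t$-slot and one extra factor of $(q+\ell)$ in the denominator compared to your claim. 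Reconciling this with the stated prefactor $\frac{1}{(q+\ell-1)!}$ of $\Upsilon^1_\ell$ is not a tautology: it requires tracking how the leading $t^{k_0}$ block gets absorbed into $t^{k_\ell}$ under the relation $(t^{k_0}\sd\!a_1\cdots\sd\!a_\ell t^{k_\ell})\cc=(\sd\!a_1\cdots\sd\!a_\ell t^{k_0+k_\ell})\cc$ (valid since $|t|=0$), together with the multiplicity factor $k$ coming from the $r$-sum in~\eqref{psi} when $\epsilon=1$ (all terms there are identical since $\nu_1=0$). In other words, you have misidentified $\epsilon=1$ as the ``straightforward'' case; the sign is trivial there, but the combinatorial bookkeeping is not, and it is at least as delicate as the $\epsilon=0$ case. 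You should redo the term-by-term identification for $\epsilon=1$ with the correct $(\ell+1)$-fold Tsygan sum and explain where the extra factor of $q+\ell$ goes; until that is done, the claimed commutativity is unverified.
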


\begin{proof}[Sketch of proof] In the special case where $\oA=\bk$ is
  a rank one free $\bk$-module, $A'=A_\tau=\bk\oplus\bk\ta$.  In this
  case, the isomorphisms of part (i) of the lemma, as well as fact
  that the map $\zeta$ respects the differentials, reduce to Lemmas
  \ref{l:babg0} and \ref{l:babg}. The proof in the general case is
  similar. Note that implication \eqref{imply}, used in the proof of
  Lemma \ref{l:babg}, is applicable in the general case as well, since
  in the algebra $A'$ it follows that $\oA\cdot \oA=0$.

  Finally, commutativity of the diagram of part (ii) is verified by
  direct computation using the explicit formula for the Tsygan map and
  for the map $\zeta$. We omit the details.
\end{proof}

\begin{remark}\label{l:upstau} In the special case $A=A_\tau$, i.e.,
$\oA=\bk$,
the   commutative diagram of Lemma \ref{zeta}(ii) reduces to the diagram:
$$\xymatrix{
  \bigl((M^{\o_S\ell})^0, \ \tilde \partial^0+\tilde \partial^1\bigr)
  \ar[d]^<>(0.5){\zeta}_<>(0.5){\cong}\ar[rrrr]^<>(0.5){s^\epsilon\cdot
    1_{\ell} \,\ \mto\ \, \Upsilon^\epsilon_\ell} &&&& \
  \bigl(\prod_{q\geq0}\
  (\cll)_{\bZ/\ell}^q,\ \partial^0+\partial^1\bigr)
  \ar[d]^<>(0.5){\zeta\cc}_<>(0.5){\cong}\\
  \bigl([\oOm A_\tau]_{(\ell)},\ \msb\bigr)\ \ar[rrrr]^<>(0.5){\EE'}
  &&&&\ \bigl(\prod_{q\geq0}\ [\oDR^{\hdot,q}_tA_\tau]_{(\ell)},\
  \si_t\bigr). }
$$
 
We remark that it is immediate from commutativity of this diagram that
$(\partial^0+\partial^1)\Upsilon^\epsilon_\ell$ maps to zero in
$(C_\ell)_{\Z/\ell}$.  This way, one obtains the equation of Lemma
\ref{page2}(1), but only modulo cyclic permutations.
 \end{remark}

Proposition \ref{p:heq} is a direct consequence of
the above lemma and Lemma \ref{page2}.

\section{Periodic and negative extended cyclic homology}
\label{negative}

\subsection{}
We define extended periodic and negative cyclic homology of $A$ as
follows
$$
\oEHC^\per(A) := H(\oDR_t \dpu, \si_t - u\sd), \quad \text{and} \quad
\oEHC^-(A) := H(\oDR_t \dspu, \si_t - u \sd).
$$
Similarly, let us define versions using the $t$-adic completion,
$\oEHC^{\per,\wedge} := H(\ohDRt \dpu, \si_t - u\sd)$ and
$\oEHC^{-,\wedge}(A) := H(\ohDRt \dspu, \si_t - u \sd)$.

\begin{remark}\label{internal} The complexes above can be viewed as
  bicomplexes with
  differentials $u\sd$ and $\si_t$.  Each of the two differentials
  preserves `internal degree,' defined as the sum of de Rham degree
  and degree in $t$ minus degree in $u$. Separating homogeneous
  components with respect to internal degree gives decompositions of
  $(\ohDRt \dpu, \si_t - u\sd)$ and $(\ohDRt \dspu, \si_t - u \sd)$
  into infinite products of subcomplexes.  (Unlike for $(\oDR_t
  \otimes R, \si_t-u\sd)$, the uncompleted versions here are not
  direct sums of subcomplexes in each internal degree, which is why we
  restrict our attention in this section to the $t$-adically completed
  versions, which are better behaved.)
\end{remark}

\begin{proposition}\label{p:ext-per-cyc}
  The projection $(\ohDRt \dpu, \si_t - u \sd) \onto (\oDR \dpu, -u
  \sd)$, modulo $t$, is a morphism of complexes, inducing an
  isomorphism on homology $\oEHC^{\per,\wedge}(A) \iso \oHD(A) \dpu$.
\end{proposition}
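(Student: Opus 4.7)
My plan is to reduce the statement to a computation of the associated graded of a $t$-adic filtration, which is controlled by Lemma \ref{qis0}. First I would verify that the projection $\pi$ is a chain map: since $\sd$ preserves $t$-degree it commutes with $\pi$, while $\si_t$ strictly raises $t$-degree (sending $\oDR_t^{p,q}$ into $\oDR_t^{p-1,q+1}$) and is therefore annihilated by $\pi$. Thus $\pi$ intertwines $\si_t - u\sd$ with $-u\sd$, and a direct computation in the $u$-direction gives $H(\oDR \dpu, -u\sd) = \oHD(A) \dpu$.

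Next I would equip $\ohDRt \dpu$ with the decreasing $t$-adic filtration $F^k := \prod_{q \geq k} \oDR_t^{\hdot,q} \dpu$. Both differentials preserve $F^k$, and $\si_t$ raises the filtration strictly, so the associated graded differential on $\gr^k \cong \oDR_t^{\hdot,k} \dpu$ reduces to $-u\sd$ alone. The $E_1$ page of the induced spectral sequence is therefore $H(\oDR_t^{\hdot,k} \dpu, -u\sd)$, which equals $\oHD(A) \dpu$ in filtration degree $k=0$ and vanishes for $k \geq 1$: indeed, $(\oDR_t^{\hdot,k}, \sd)$ is a $\sd$-stable direct summand of the acyclic complex $(\oDR_t^+, \sd)$ of Lemma \ref{qis0}, hence itself acyclic, and tensoring with $\bk\dpu$ preserves this. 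Consequently $E_1$ is concentrated in filtration degree $0$, so $E_1 = E_\infty$ with all higher differentials forced to vanish (their targets lie in strictly higher filtration).

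The main (mild) technical point will be convergence of the spectral sequence to the actual homology. Since the filtration is separated ($\bigcap_k F^k = 0$) and complete ($\ohDRt = \varprojlim \ohDRt / F^k$, which is the very definition of the $t$-adic completion), and since $E_\infty$ lives in a single filtration degree so no extension problems or $\varprojlim{}^{\!1}$ obstructions arise, the spectral sequence converges strongly. The resulting isomorphism $\oEHC^{\per,\wedge}(A) \iso \oHD(A) \dpu$ is realized by the projection modulo $t$, as required. Working with the $t$-adic completion is essential here: the analogous statement for $\oDR_t \dpu$ in place of $\ohDRt \dpu$ would fail because the filtration $F^k$ on the uncompleted complex is not complete (cf.~Remark \ref{internal}).
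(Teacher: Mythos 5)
Your proof is correct, and it takes a route that differs in a subtle but real way from the paper's. The paper filters the bicomplex by the $u$-degree and computes the $u\sd$-homology first; via Corollary \ref{qis1} the $E_1$ page is concentrated in $t$-degree zero (where $\si_t$ necessarily vanishes), and convergence is secured by Remark \ref{internal}: each internal-degree homogeneous component of the bicomplex is bounded, so the spectral sequence automatically converges on each factor of that product decomposition. You instead filter by $t$-degree; the associated graded differential is $-u\sd$ alone, and you invoke Lemma \ref{qis0} directly (each $(\oDR_t^{\hdot,k},\sd)$ for $k\geq 1$ is a $\sd$-stable direct summand of the acyclic $(\oDR_t^+,\sd)$), so your $E_1$ is concentrated in filtration degree $0$; convergence is then handled by completeness of the $t$-adic filtration on $\ohDRt$. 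Both computations reduce to the acyclicity statement of Lemma \ref{qis0}, so in that sense they are close cousins, but your convergence argument replaces the internal-degree trick by a filtration that is bounded above and complete — cleaner in structure, though you should flesh out the passage "no $\lim^1$ obstructions arise" by an explicit Milnor-sequence argument (each $F^1/F^k$ is acyclic by the bounded case, the transition maps are level-wise surjective, so $H(\varprojlim F^1/F^k)=\varprojlim H(F^1/F^k)=0$, whence $H(F^1)=0$ and $H(F^0)\to H(F^0/F^1)$ is an isomorphism). Your closing remark about why $t$-adic completion is essential is correct and matches the paper's Remark \ref{internal}, though the paper phrases the failure in terms of the lack of a direct-sum internal-degree decomposition rather than incompleteness of the filtration; these are two faces of the same obstruction.
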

\begin{remark} 
  The above isomorphism, as well as all other isomorphisms which
  appear in this section, preserves the `homological grading,' which
  is defined to be de Rham degree minus twice the degree in $u$.
\end{remark}

\begin{proof}[Proof of Proposition]
  Exactly as in the proof of Proposition \ref{p:ext-cyc}, we can take
  homology first with respect to $u\sd$ and use a spectral sequence.
  This spectral sequence is obviously convergent on each homogeneous
  component with respect to the internal degree, cf.~Remark
  \ref{internal}; moreover, it collapses on the first page (the
  homology with respect to $u\sd$).
\end{proof}
\begin{remark}\label{r:ext-neg-cyc}
  One can similarly compute $\oEHC^{-,\wedge}(A)$.  First of all,
  observe that a summand of $\ker_{\oHH(A)}(B \circ I)$ (recall
  \eqref{e:connes-seq-maps} for the definitions of $B$ and $I$) splits
  off in degree zero in both $t$ and $u$. As in the literature, let us
  make the abuse of notation $B := B \circ I: \oHH(A) \to \oHH(A)$ for
  the induced differential on $\oHH(A)$.

\begin{claim}\label{e:ehc-neg-seq} There is a natural
  exact sequence
$$
0\to\prod_{q \geq 1} \frac{H(\oDR_t / \sd \oDR_t,
  \si_t)^{\hdot,q}}{(\sdi \si_t)^{q}(\oHD(A))} \ \stackrel{\sd}\too\
\frac{\oEHC^{-,\wedge}(A)}{\ker_{\oHH(A)}(B)}\ \stackrel{\pi}\too \
\prod_{m \geq 1} u^m \cdot \ker_{\oHD(A)}((\sdi \si_t)^{m+1})\to0.
$$
\end{claim}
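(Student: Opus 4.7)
The plan is to define both maps via a truncated analogue of the formula from Proposition \ref{inverse}, and then establish exactness via the spectral sequence of the bicomplex $(\ohDRt\dspu,\,\si_t,\,-u\sd)$ filtered by $t$-adic degree. Throughout, I would exploit the $\sd$-acyclicity of $\ohDRt^+$ (Lemma \ref{qis0}), Theorem \ref{e:ehc-cyc2}(ii), and the Tsygan map of Theorem \ref{t:Tthm} to translate between $\si_t$- and Hochschild-style differentials.

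For the map $\sd$, a representative $f\in\oDR_t^{\hdot,q}$ of $[f]\in H(\oDR_t/\sd\oDR_t,\si_t)^{\hdot,q}$ yields a $(\si_t-u\sd)$-cycle $\sd f\in\ohDRt\dspu$, since $\si_t\sd f=-\sd\si_t f$ lies in $\sd^2\oDR_t=0$. To see that $\sd$ kills $(\sdi\si_t)^q(\oHD(A))$, I would introduce, for $g\in\ker(\sd|_{\oDR})$, the negative-cyclic partner of Proposition \ref{inverse},
\[
\Phi^-_m(g)\ :=\ \sum_{k=0}^{m}u^{\,m-k}(\sdi\si_t)^k g\ \in\ \ohDRt\dspu.
\]
A telescoping computation using $\sd\sdi=\Id$ and $\sd g=0$ yields $(\si_t-u\sd)\Phi^-_m(g)=\si_t(\sdi\si_t)^m g=\sd((\sdi\si_t)^{m+1}g)$, so taking $m=q-1$ exhibits $\sd((\sdi\si_t)^q g)$ as a boundary. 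The map $\pi$ sends a cycle $f=\sum_{k\geq 0}f_k u^k$ to the sequence $(f_m|_{t=0})_{m\geq 1}$; each $f_m|_{t=0}$ is closed by restricting the cycle equation $\si_t f_m=\sd f_{m-1}$ to $t$-degree zero, and an inductive argument on $t$-weight, repeatedly inverting $\sd$ on $\ohDRt^+$ via $\sdi$, shows that the weight-$m$ component of $f_0$ agrees with $(\sdi\si_t)^m(f_m|_{t=0})$ modulo $\sd$-boundaries; the identity $\si_t f_0=0$ then forces $(\sdi\si_t)^{m+1}(f_m|_{t=0})=0$ in $H(\oDR_t/\sd\oDR_t,\si_t)^{\hdot,m+1}$. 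Surjectivity of $\pi$ is immediate from the same displayed formula: for $g\in\ker_{\oHD(A)}((\sdi\si_t)^{m+1})$, injectivity of the section $\sdi$ forces $\si_t(\sdi\si_t)^m g=0$, so $\Phi^-_m(g)$ is a genuine cycle, and $\pi(\Phi^-_m(g))$ has $u^m$-component $g$ and vanishes in all other weights.

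Exactness is obtained from the spectral sequence of the $t$-adic filtration on $(\ohDRt\dspu,\si_t-u\sd)$, for which convergence is ensured in each internal degree of Remark \ref{internal}. Taking $\sd$-cohomology first in each fixed $t$-weight $q$, the acyclicity of $\ohDRt^+$ kills all positive-$u$-power contributions for $q\geq 1$, leaving only the copies $\oHD(A)\cdot u^k$ at $q=0$ for $k\geq 1$ in positive-$u$ weight. The higher differentials of the spectral sequence are precisely the iterates $(\sdi\si_t)^{k+1}$---viewed as analogues of the Connes periodicity per the remark after Proposition \ref{inverse}---which cut these down to $\ker_{\oHD(A)}((\sdi\si_t)^{k+1})$; the $u^0$-stratum at $t$-weight $q\geq 1$ becomes $\oHC(A)/(\sdi\si_t)^q\oHD(A)$ via \eqref{qhc}; and the $u^0$, $q=0$ corner is $\ker_{\oHH(A)}(B)$, by Theorem \ref{e:ehc-cyc2}(ii) together with the identification of $\sd$ on $H(\ohDRt,\si_t)\cong\prod_q\oHH(A)$ with the Connes $B$-operator furnished by Theorem \ref{t:Tthm}(ii).

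The main obstacle, I expect, is to make precise the identification of the higher spectral-sequence differentials with the periodicity iterates $(\sdi\si_t)^{m+1}$, and to splice the graded $E_\infty$-data into the stated short exact sequence. Equivalently, one must verify that any cycle $f$ with $\pi(f)=0$ can, modulo $\ker_{\oHH(A)}(B)$, be corrected to the form $\sd f'$ for a class $[f']$ in the LHS; this should again follow from the inductive lifting formulas developed above, but handling the simultaneous dependence on $t$- and $u$-weight requires care.
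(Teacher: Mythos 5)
Your proposal follows essentially the same route as the paper's proof: the spectral sequence of the bicomplex $(\ohDRt \dspu, u\sd, \si_t)$ filtered by $t$-adic degree, taking $u\sd$-homology first, with acyclicity of $(\ohDRt)^+$ collapsing the $E_1$ page to the expected pieces, and the higher differentials identified with iterates of $\sdi\si_t$ which remove $\oHD(A)\cdot u^m$ at $q=0$ against the $q\geq 1$ strata. Your explicit truncated lift $\Phi^-_m$ is a helpful concrete supplement to the paper's somewhat terser convergence argument, but the underlying strategy — and the points flagged as requiring care (splicing the $E_\infty$ data into the stated short exact sequence) — coincide with the paper's.
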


Here, the map $\sd$ is induced by the obvious map $\osd: \ohDRt / \sd
\ohDRt \to \ohDRt$, and the map $\pi$ mods by the constant term in $u$
and by positive degree in $t$.  Further, $(\sdi \si_t)^q: \oHD(A) \to
H(\oDR_t / \sd \oDR_t, \si_t)^{\hdot,q}$ is the map considered in
\S\ref{inv_sec} (see the proof of Proposition \ref{inverse}). More
explicitly, the map $\sdi \si_t$ induces an endomorphism of
$H(\oDR_t/\sd \oDR_t, \si_t)$ as follows. Given an element of
$H(\oDR_t/\sd \oDR_t, \si_t)$, one first lifts it to $\oDR_t$, then
applies $\si_t$. After that, one applies $\sdi$, and projects to
$\oDR_t / \sd \oDR_t$, which lands back in $\ker_{\oDR_t/\sd
  \oDR_t}(\si_t)$.  Finally, we precompose with the inclusion $\oHD(A)
\subseteq \oHC(A) \cong H(\oDR_t/\sd \oDR_t, \si_t)^{\hdot,0}
\subseteq H(\oDR_t/\sd\oDR_t,\si_t)$.

By definition, $(\sdi \si_t)^m |_{\oHD_{k}(A)}=0$ when $k < 2m$.
Thus, for fixed $k$ and $m > k/2-1$, the component $u^m \cdot
\ker_{\oHD_k(A)}((\sdi \si_t)^{m+1})$ of the above product is nothing
but $u^m \cdot \oHD_k(A)$.

Further, using isomorphism \eqref{qhc}, the short exact sequence of
Claim \ref{e:ehc-neg-seq} becomes

\begin{equation}\label{e:ehc-neg-seq2}
  0\to\ \prod_{q \geq 1} \frac{\oHC(A) \cdot t^q}{\im(S^{q+1})}\  \too\  
  \frac{\oEHC^{-,\wedge}(A)}{\ker_{\oHH(A)}(B)}\ 
  \too\  \prod_{m \geq 1} u^m \cdot \ker_{\oHD(A)}((S_1 S_2)^{m+1})\ \to 0,
\end{equation}
where we note that the $q$ became $q+1$ in the first factor since the
periodicity operator $S$ is acting on $\oHC(A)$ rather than on
$\oHD(A)=\im(S_1)$.

\begin{proof}[Proof of Claim \ref{e:ehc-neg-seq}]
  We use the spectral sequence for the bicomplex $(\ohDRt \dspu,\ u
  \sd, \si_t)$, taking homology first with respect to $u \sd$.  This
  yields the first page,
\[
(u \cdot \oHD(A) \dspu\ \oplus\ \ker_{\ohDRt }(\sd),\ 0 \oplus \si_t).
\]
Using the isomorphism $\sd: (\ohDRt)^+ / \sd (\ohDRt)^+ \iso \sd
(\ohDRt)^+$, which commutes with $\si_t$, the second page becomes
\[
u \cdot \oHD(A) \dspu\ \oplus\ H^{> 0}(\ohDRt / \sd \ohDRt,
\si_t)/(\sdi \si_t \oHD(A))\ \oplus\ \ker_{\oHH(A)}(\sd).
\]
The terms on the left and right of the exact sequence of Claim
\ref{e:ehc-neg-seq} then appear on the $(m+2)$-nd and $(q+1)$-st
pages.  One sees that the spectral sequence converges to this, i.e.,
there are no other nontrivial differentials that appear.  Therefore,
the associated graded (where by this we mean an infinite product of
homogeneous components) of $\oEHC^{-,\wedge}(A)$ with respect to the
$t$-adic filtration is the above, and one can check that the maps in
the exact sequence of Claim \ref{e:ehc-neg-seq} induce the required
isomorphism on the level of associated graded, so the sequence there
must be exact (with injective first arrow and surjective third arrow).
\end{proof}
\end{remark}
\begin{remark}
  According to \eqref{e:ehc-cyc2}, Proposition \ref{p:ext-per-cyc},
  and \eqref{e:ehc-neg-seq2}, the extended versions of all cyclic
  homology groups $\oEHC(A), \oEHC^{\per,\wedge}(A)$, and
  $\oEHC^{-,\wedge}(A)$ are expressible in terms of the
  \emph{original} Connes sequence \eqref{e:connes-seq}, and do not
  require $\oHC^-(A)$ or $\oHC^\per(A)$, which use infinite series in
  their definition.  In other words, taking the extended version
  breaks the complex up into a direct product of complexes only
  involving finite sums, i.e., elements of $\oDR_t[u,u^{-1}]$ (and via
  Theorem \ref{t:heq}, elements of $\oOm[u,u^{-1}]$), owing to the
  internal grading. As a consequence, the natural morphisms we will
  describe in \S \ref{s:eqcoh} from the various flavors of cyclic
  homology of $A$ to the (equivariant) cohomology of the
  representation varieties of $A$ all factor through (infinite
  products of) subquotients of $\oHC(A)$ and $\oHH(A)$.
\end{remark}

\subsection{} The result below describes the maps induced by $\EE $ on
the versions of cyclic homology.

Let $\pi':\ \oEHC(A)\onto \oHD(A)\o u^{-1}\bk[u^{-1}] \subseteq
\oHD(A) \o R$ be the map given by $f = \sum_{m \geq 0} f_m u^{-m} \mto
\sum_{m \geq 1} f_m u^{-m} \mmod (t)$, cf.~Proposition
\ref{p:ext-cyc}.  Let $\oEHC^\wedge(A) = H(\ohDRt \o R, \si_t-u\sd)
\cong \oHC(A)[\![t]\!] \oplus u^{-1} \oHD(A)[u^{-1}]$, and define
$\pi'$ in the same way on it.

\begin{proposition}\label{p:Timh}
  \vi The composite map $\dis\ \oHC(A)\ \stackrel{\EE}\too\
  \oEHC^\wedge(A)\ \stackrel{\pi'}\too \oHD(A)\o R$ can be expressed
  as $\pi'\ccirc \EE\ =\ \sum_{m \leq 1}\ u^{-m} \cdot S_1 S^{m-1}$.

  \vii The induced map $\ \oHC^\per(A) \to \oEHC^{\per,\wedge}(A)\
  \cong\ \oHD(A)\dpu\ $ is $\ \sum_{m \in \bZ} u^{-m} \cdot p_1 u^m$.

  \viii The induced map $\oHC^-(A)\ \to\ \oEHC^{-,\wedge}(A)$, in
  terms of \eqref{e:ehc-neg-seq}, is the sum of $I^-$ to
  $\ker_{\oHH(A)}(\sd)$ with the map which is $\sum_{m \geq 1} u^m
  (p_1 u^{-m} p^-)$ projected to the subspace of $\prod_{m \geq 1}\
  u^m \cdot \ker_{\oHD(A)}((\sdi \si_t)^{m+1})$, and, restricted to
  the kernel of this map, maps to the zero fiber $\prod_{q \geq 1}\
  H(\oDR_t / \sd \oDR_t, \si_t)^{\hdot,q} / \im((\sdi \si_t)^q)$ by
  $\sum_{q \geq 1} \EE^q\ccirc p\ccirc p^-$.
\end{proposition}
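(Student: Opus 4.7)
The plan is to verify each part by applying the Tsygan map $\EE$ to harmonic chain-level representatives and matching the result against the stated formula, using as essential inputs Theorem~\ref{t:Tthm} (which provides the intertwinings $\EE \ccirc \msb = \si_t \ccirc \EE$ and $\EE \ccirc \msB = \sd \ccirc \EE$) and Theorem~\ref{t:heq} (which identifies each $t$-graded piece of the extended target with ordinary cyclic or Hochschild homology).  For part (i), I would represent a class in $\oHC_N(A)$ by a harmonic cycle $f = \sum_{k \geq 0} f_k u^{-k}$ in $(\oOm \o R,\ \msb - u \msB)$, with $|f_k| = N - 2k$ and cycle condition $\msb f_k = \msB f_{k+1}$ for all $k \geq 0$.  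Theorem~\ref{t:Tthm} then produces a cycle $\EE f = \sum_k \EE(f_k) u^{-k}$ in $(\ohDRt \o R,\ \si_t - u \sd)$, and the projection $\pi'$ extracts the constant-in-$t$ component of each $\EE(f_m)$, which is a scalar multiple of $f_m\cc \in \oDR$ placed in the $u^{-m}$ slot.  On the other hand, the shifted cycle $S^{m-1}(f\cc)$ has leading term $f_{m-1}$, and applying $S_1$ to it uses the harmonic identities for $\bi$ and $\msB$ from \eqref{e:iP} and \eqref{connes} together with the cycle condition to express $\osd^{-1}(\si f_{m-1}\cc)$ as a scalar multiple of $f_m\cc$ in $\oHD_{N-2m}(A)$.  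Matching coefficients of $u^{-m}$ and absorbing the degree-dependent scalars via the comparison of \S\ref{ss:comp-maps} yields the claimed formula.

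For part (ii), the same strategy applies to harmonic Laurent cycles $f = \sum_{k \in \bZ} f_k u^{-k}$ in $\oHC^\per(A) = H(\oOm\bbu,\ \msb - u \msB)$, whose target $\oEHC^{\per,\wedge}(A) \cong \oHD(A)\dpu$ is given by the mod-$t$ projection of Proposition~\ref{p:ext-per-cyc}.  Since $p_1 u^m(f\cc) = f_m\cc$ by definition of $p_1$, and the mod-$t$ reduction of $\EE f$ yields a scalar multiple of $\sum_k f_k\cc u^{-k}$, the formal sum $\sum_{m \in \bZ} u^{-m} p_1 u^m$ reassembles into precisely this map.

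For part (iii), a harmonic cycle $f = \sum_{k \geq 0} f_k u^k \in \oOm[\![u]\!]$ in $\oHC^-(A)$ is sent by $\EE$ to $\EE f = \sum_k \EE(f_k) u^k \in (\ohDRt[\![u]\!],\ \si_t - u \sd)$.  The target $\oEHC^{-,\wedge}(A)$ is described by the exact sequence \eqref{e:ehc-neg-seq}, and I would verify the image of $\EE f$ on each summand separately.  First, the degree-$0$-in-both-$u$-and-$t$ component of $\EE f$ is a scalar multiple of $f_0\cc$, matching $I^-(f\cc)$ in the $\ker_{\oHH(A)}(\sd)$ summand.  Second, the $u^m$-component for $m \geq 1$ reduced modulo $t$ is a scalar multiple of $f_m\cc \in \oDR$, which under the identification \eqref{e:ehc-neg-seq2} corresponds to $u^m(p_1 u^{-m} p^-)(f\cc)$, since $p^-$ views $f$ inside $\oHC^\per$, $u^{-m}$ shifts the index, and $p_1$ extracts the $u^0$-coefficient landing in $\oHD(A)$.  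Third, on the kernel of this projection, the $t$-positive contributions of $\EE f$ are captured by the maps $\EE^q$ of \eqref{hee}, which by Theorem~\ref{t:heq} are isomorphisms from $\oHC(A)$; composition with $p \ccirc p^-$ then produces the claimed $\sum_{q \geq 1} \EE^q \ccirc p \ccirc p^-$.

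The main obstacle will be part (iii): the decomposition of $\oEHC^{-,\wedge}(A)$ via \eqref{e:ehc-neg-seq} arises from a spectral sequence and is not a literal chain-level direct sum, so one must verify by a careful diagram chase that each projection of $\EE f$ matches the stated map.  The key simplification is that all relevant maps are determined by their action on harmonic representatives, for which the Tsygan formula is explicit and Theorem~\ref{t:heq} supplies the identifications needed to move between the extended and ordinary cyclic homologies.
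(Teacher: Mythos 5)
Your approach matches the paper's: the published proof is a one-line citation of formula \eqref{ddtH}, Propositions \ref{p:ext-cyc} and \ref{p:ext-per-cyc}, and Remark \ref{r:ext-neg-cyc}, and your expansion—applying $\EE$ to harmonic representatives and reading off the $u$- and $t$-homogeneous pieces through those same isomorphisms—is the verification the paper leaves implicit. Your flagging of part (iii) as requiring a spectral-sequence/diagram chase rather than a direct chain-level splitting is accurate, and the paper leaves that step equally unexecuted.
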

\noindent
The proof is a direct consequence of formula \eqref{ddtH},
Propositions \ref{p:ext-cyc} and \ref{p:ext-per-cyc} and Remark
~\ref{r:ext-neg-cyc}.

\subsection{Extended version of the Connes exact sequence}
There is an extended analogue of diagram \eqref{e:bigdiag}.  One can
obtain this diagram by applying $\EE$ to the diagram of short exact
sequences of complexes that induce \eqref{e:bigdiag} (in the usual
proof). This yields
\begin{equation*}
  \xymatrix{
    0 \ar[r] & \ohDRt \dspu \ar[r]^{\cdot u} \ar@{=}[d] & \ohDRt \dspu 
    \ar[r] \ar[d] & \ohDRt  \ar[r] \ar[d] & 0 \\
    0 \ar[r] & \ohDRt \dspu \ar[r]^{\cdot u} \ar[d] & \ohDRt \dpu \ar[r] 
    \ar[d]^{\cdot u^{-1}} & \ohDRt  \otimes R \ar[r] \ar@{=}[d] & 0 \\
    0 \ar[r] & \ohDRt  \ar[r] & \ohDRt  \otimes R \ar[r]^{\cdot u} & 
    \ohDRt  \otimes R \ar[r] & 0.
  }
\end{equation*}
This induces the diagram \eqref{e:bigdiag} of short exact sequences,
with $\EE$ put in front of every nonzero term and $t$-adic completions
taken everywhere, where $\oEHH$, $\oEHH'$, $\oEHD$, and $\oEHD'$ are
given by:

\begin{gather*}
  \oEHH(A) := H(\oDR_t, \si_t), \qquad \oEHH'(A) := \bigoplus_{q \geq
    0}
  [(\sdi \si_t)^q(\ker_{\oDR}(\sd)) \cap \ker(\si_t)]\ \subseteq\ \oEHH(A), \\
  \oEHD(A) := \oHD(A)[u^{-1}]\ \oplus\ \bigoplus_{q \geq 1} \ker(\sd:
  H(\oDR_t / \sd \oDR_t, \si_t) \to H(\oDR_t, \si_t)), \\ \oEHD'(A) :=
  \oHD(A)[u^{-1}] \ \oplus\ \bigoplus_{q \geq 1} (\sdi
  \si_t)^q(\oHD(A)) \subseteq \oEHD(A),
\end{gather*}
and the $t$-adically completed versions are obtained by replacing
$\oDR_t$ with $\ohDRt$, and the direct sums above by direct products.
Then, $\EE$ maps \eqref{e:bigdiag} commutatively onto the resulting
diagram.

\begin{remark}
  By Theorem \ref{t:heq}, one has isomorphisms $\EE^q: \oHH(A) \iso
  \oEHH(A)^{\hdot,q},\ q \geq 0$. We deduce that $\oEHH'(A)^{\hdot,0}
  \cong \ker_{\oHH(A)}(B) = I^{-1}(\im S)$, where $S: \oHC(A) \to
  \oHC(A)$ is the periodicity operator.  Then one obtains:
\begin{gather*}
  \oEHH(A)\ \cong\ \oHH(A)[t] \quad \supseteq \quad
  \oEHH'(A)\  \cong\  \oplus_{q \geq 0}\ I^{-1}(\im(S^{q+1})) \cdot t^q, \\
  \oEHD(A) \cong \oHD(A)[u^{-1}] \bplus\, t \oHD(A)[t] \supseteq
  \oEHD'(A) \cong \oHD(A)[u^{-1}] \bplus \left(\oplus_{q \geq 1}
    \im(S_1 S_2)^{q} \cdot t^q\right).
\end{gather*}
The $t$-adically completed versions of $\oEHH(A), \oEHH'(A)$,
$\oEHD(A)$, and $\oEHD'(A)$ are all obtained by $t$-adically
completing the above formulas (replacing sums by products and
polynomials in $t$ by power series in $t$).
\end{remark}
\begin{remark}
  One can explicitly compute the extended, $t$-adically completed
  version of \eqref{e:bigdiag} in terms of the preceding remark along
  with \eqref{e:ehc-cyc2}, Proposition \ref{p:ext-per-cyc}, and
  \eqref{e:ehc-neg-seq2}, and verify that the rows are exact using
  only exactness of \eqref{e:bigdiag}. We describe just the horizontal
  maps in these terms:

\emph{The bottom row:}

$ES_2$: This is the identity on $u^{-1}\oHD(A)[u^{-1}]$ and the
inclusion $S_2$ on $\oHD(A)[\![t]\!] \subseteq \oHC(A)[\![t]\!]$,
preserving degree in $t$.

$EB: \oEHC^\wedge(A) \cong u^{-1} \oHD(A)[u^{-1}] \oplus
\oHC(A)[\![t]\!] \to \oEHH^\wedge(A) \cong \oHH(A)[\![t]\!]$ is zero
on the first factor of the source and $B$ on the second factor.

$EI: \oEHH^\wedge(A) \cong \oHH(A)[\![t]\!] \to \oEHC^\wedge(A) =
u^{-1} \oHD(A)[u^{-1}] \oplus \oHC(A)[\![t]\!]$ is zero on the first
factor of the target, and $I$ on the second factor of the target.

$ES_1$ is the tautological map
(preserving the degree in $t$).

\emph{The middle row:}

$EB': \oEHC^\wedge(A) \to \oEHC^{-,\wedge}(A)$: in terms of
\eqref{e:ehc-neg-seq}, this kills $u^{-1}\oHD[u^{-1}]$, maps by $B$ on
$\oHC(A)$ in degree $q=0$ of $t$, and for positive degree $q \geq 1$
in $t$, it is the projection $\oHC(A) \onto \oHC/\im S^{q+1} \subseteq
\oEHC^{-,\wedge}(A)$ by \eqref{e:ehc-neg-seq}.

$Ep^-: \oEHC^{-,\wedge}(A) \to \oEHC^{\per,\wedge}(A)$ projects onto
$\ker_{\oHH(A)}(B) \oplus \prod_{m \geq 1} u^m \ker_{\oHD(A)}(S_1
S_2)^{m+1}$, and then applies $I$ to the first factor and the natural
inclusion on the second factor into $u \oHD\dspu$, preserving degree
in $u$.

$Ep_1$ is the identity on $\oHD(A)[u^{-1}]$ and sends $u^m \oHD(A)$ to
the factor for $q=m$ by the map $S^{m} S_2 u^{-m} t^m$.

$E p_2$ is the tautological inclusion (preserving degree in $t$).

\emph{The first row:}

The first map is the tautological inclusion.

$EB^-$ maps $\oHH(A)$ via $B$ to $\ker_{\oHH(A)}(B)$ in degree $q=0$
of $t$, and in higher degrees $q \geq 1$ maps $\oHH(A)$ to $\oHC(A)$
via $I$ and projects to $\oHC(A)/\im S^{q+1}$.

$ES^-$ applies $t \cdot S$ to $\oHC(A) t^q / \im S^{q+1}$ for $q \geq
1$, increasing degree in $t$ by one. Modulo these factors (the kernel
of the sequence \eqref{e:ehc-neg-seq}), it multiplies by $u$: on the
quotient of \eqref{e:ehc-neg-seq} these are the inclusions
$\ker_{\oHD(A)}(S_1 S_2)^{m+1} \subseteq \ker_{\oHD(A)}(S_1
S_2)^{m+2}$, and on $\ker_{\oHH(A)}(B)$ this is the natural map
$\ker_{\oHH(A)}(B) \to \ker_{\oHD(A)}(S_1 S_2)$ (this follows because
$\ker_{\oHH(A)}(B) = \ker_{\oDR}(\sd) \cap \ker_{\oDR}(\si_t)$, so
projecting mod $\sd \oDR$ we land in $\ker_{\oHD(A)}(S_1 S_2)
\subseteq \ker_{\oHD(A)}(S_1 S_2)^2$).

$EI^-$ is the identity on $\ker_{\oHH(A)}(B) = I^{-1}\im(S)$ and
applies $B$ to the kernel of \eqref{e:ehc-neg-seq}, which gives a
well-defined map to $\oHH(A)$ since $B \ccirc S^q = 0$ for $q \geq
1$. The image of the latter map is the subspace $\ker(I)=\im(B)$ of
$\oEHH'^\wedge(A)=\bigoplus_{q \geq 0} I^{-1} \im S^{q+1}$. Then, on
the quotient \eqref{e:ehc-neg-seq}, $EI^-$ applies $(S_1S_2)^{m}
u^{-m}$ to $u^m \ker(S_1 S_2)^{m+1}$ for $m \geq 1$, which gives a
well-defined map to $\oHH(A) / \ker(I) \cong \im(I) \subseteq
\oHC(A)$, with image equal to $\im_{\oHD(A)}(S_1 S_2)^{m} \cap
\ker_{\oHD(A)}(S_1 S_2)$, which is $\im(S^{m+1}) \cap \ker(S) =
\im(S^{m+1}) \cap \im(I)$ as a subspace of $\im(I) \subseteq \oHC(A)$.
\end{remark}

\section{The representation functor and equivariant Deligne
  cohomology}\label{s:eqcoh}

\subsection{Equivariant Deligne cohomology}
Let $X$ be an affine\footnote{It is possible to generalize everything
  below to the case of not necessarily affine varieties by replacing
  the space of global differential forms on $X$ by the sheaf of
  differential forms.} variety equipped with an action of a connected
reductive algebraic group $G$ with Lie algebra $\mfg$.  Let $\act:
\mfg \to \Vect(X)$ be the infinitesimal action and let $ \Om^1_X$ be
the space of K\"ahler differentials on $X$.  Thus, the algebra
$\Om_X=\wedge_{\bk[X]}(\Om^1_X)$ of algebraic differential forms on
$X$ acquires a natural structure of $\g$-module.  One also has a
natural $\g$-action on $\bk[\g]$, a polynomial algebra, induced by the
adjoint $\g$-action on $\g$ itself.  We let $\g$ act diagonally on the
algebra $\Om_X[\mfg]:=\Om_X \otimes \bk[\mfg]$.

Recall that the Cartan model of the equivariant (algebraic) de Rham
complex of $X$ is defined ~as
\begin{equation}\label{cartan}
\bigl((\Om_X[\mfg])^\mfg,\ \sd_X - i_\mfg\bigr).
\end{equation}
where $\sd_X$ is the de Rham differential on $X$ and $i_\mfg$ is the
equivariant differential, given by
\[
i_\mfg(\sd f)(x) = \act(x)(f), \quad i_\mfg|_{\Om^0_X} = 0 =
i_{\mfg}|_{\bk[\mfg]}.
\]

By analogy with cyclic homology, we introduce three new versions of
the Cartan model, all equipped with the differential $i_\g-u \sd_X$,
where $u$ is an extra parameter:
\begin{equation}\label{equivu}
  \CC^\per_{\mfg}(X)\ :=\ (\Om_X[\mfg])^\mfg\dpu,
  \quad
  \CC_\g(X)\ :=\ (\Om_X[\mfg])^\mfg\o R,
  \quad
  \CC^-_{\mfg}(X)\ :=\ (\Om_X[\mfg])^\mfg\su.
\end{equation}

\begin{remark}\label{per}
  The complex $(\CC^\per_{\mfg}(X),\ i_\g-u \sd_X)$ is isomorphic, up
  to sign in the differential, to the standard version \eqref{cartan}
  tensored by $\bk\bbu$, via the map $\mfg^* \iso u^{-1} \mfg^*, g
  \mapsto u^{-1} g$.
\end{remark}

We define the {\em homological grading} $\CC_\g(X)=
\oplus_n\CC_\g^n(X)$ by placing the space
$(\Om^p_X\o\bk[\mfg])^\mfg\cdot u^{-r}$ in degree $n=p+2r$, for every
$p$ and $r$.  Thus, the tensor factor $\bk[\g]$, is assigned
homological degree {\em zero}, and $u$ has homological degree
$|u|=-2$.  Each of the differentials $u \sd_X$ and $i_\mfg$, and hence
also $i_\g-u \sd_X$, has homological degree $(-1)$.  We define {\em
  equivariant Deligne cohomology}, which will serve as the natural
receptacle of cyclic homology under the representation functor, by
\[ \HDe^\hdot_{\!\g}(X) := H(\CC^\hdot_\mfg(X),\ i_\g-u \sd).\]

Periodic and negative versions are defined in an obvious way.

Similarly to \S\ref{ext_sec}, we also introduce an {\em internal
  grading} $\CC_\mfg(X)=\oplus_{k\in\Z}\ \CC_\mfg(X)_k$, by assigning
the space $(\Om^p_X \o \bk^q[\g])^\mfg\cdot u^{-r}$ internal degree
$p+q+r$, for every $p,q,$ and $r$.  Here, we use the notation
$\bk^q[\g]$ for the space of degree $q$ homogeneous polynomials on
$\g$.  Each of the differentials $u \sd_X$ and $i_\g$ preserves the
internal grading. Therefore, the complex $(\CC_\mfg(X),\ i_\g-u\sd_X)$
breaks up into a direct sum over all $k\in\Z$ of the subcomplexes
$(\CC_\mfg(X)_k,\ i_\g-u \sd_X)$.

For $\ell\geq 0$ and $m\in \Z$, we define the {\em weight} $m$
component of $\HDe_{\!\g}^\ell(X)$, cf.~\S\ref{ext_sec}, by
\[\HDe_{\!\g}^\ell(X,m)\ :=\ H^\ell(\CC_\mfg(X)_{\ell-m},\
i_\g-u\sd_X).\]

The natural multiplication in the algebra $\Om_X\bbu\o\bk[\g]$ induces
a canonical $(\bk[\g]^\g)[u]$-module structure on the equivariant
Deligne cohomology. The module structure and the gradings are related
by
\begin{equation}\label{periodicity}(\bk^q[\g]^\g)\, u^r\
  \,\mbox{$\bigotimes$}\,\ \HDe_{\!\g}^\ell(X,m)
  \ \too\ \HDe_{\!\g}^{\ell-2r}(X,m-q-r),\qquad\forall q,r\geq0.
\end{equation}

There is also an algebra structure on the equivariant Deligne
cohomology, similar to the one on the ordinary Deligne cohomology.  In
the equivariant case, the product is defined by the formula $\omega
\cdot \omega' = \omega \cup u^{-1}(i_\g - u \sd_X)\omega'$.  Here
$u^{-1}(i_\g - u\sd_X) \omega'$ should be computed by first lifting
$\omega'$ to $(\Om_X[\mfg])^\mfg\o \bk(\!(u)\!)$, then after
performing the operation, projecting back to $(\Om_X[\mfg])^\mfg \o
R$.  Compare this with \cite[\S 3.6.2]{L} regarding usual Deligne
cohomology.  (Note that, when $\omega'$ is a cocycle, i.e.,
$(i_\g-u\sd_X)\omega'=0$ in $\CC^\hdot_\mfg(X)$, then the above
formula can be rewritten as $\omega \cup -\sd_X \omega'_0$, where
$\omega'_0$ is the constant term in $u$ of $\omega'$.)  The algebra
structure will not play a role below.

Equivariant Deligne cohomology is functorial with respect to
$G$-equivariant maps.

\begin{example}\label{e:triv}
  If $\mfg$ acts trivially on $X$, we obtain $\dis \CC_\mfg(X) \cong
  (\Om_X \otimes R, u\sd) \otimes \bk[\mfg]^{\mfg}$.  The cohomology
  of the first term on the RHS is
\[
H(\Om_X \otimes R,\ u\sd)\ \cong\ \Om_X/\sd \Om_X \en \bplus\en
\bigl(H_{\DR}(X) \otimes u^{-1} \!\cdot \bk[u^{-1}]\bigr).
\]

We find that the group
$\HDe^\ell_{\!\g}(X,m)$ vanishes for $m>\il$, and 
$$\HDe^\ell_{\!\g}(X,m)=
\begin{cases} 
  \underset{{}}{\left({\bigoplus\limits_{1\leq j \leq [\ell/2]}}_{_{}}
      H^{\ell-2j}_{\DR}(X)\o\bk^{j-m}[\g]^\g\right)}\en\bplus\en
  \left(\Om_X^\ell/ \sd \Om_X^{\ell-1}\o\bk^{-m}[\g]^\g\right),&\quad m\leq0\\
  \hskip 3pt{\bigoplus\limits_{m\leq j \leq[\ell/2]}
    H^{\ell-2j}_{\DR}(X) \o\bk^{j-m}[\g]^\g}^{^{}}, & \quad
  0<m\leq\il.
\end{cases}
$$

In total degree $\ell$, we therefore obtain $\HDe^\ell_{\!\g}(X) \
\cong\ \HDe^\ell(X)\o \bk[\mfg]^{\mfg}$, where we write
$$
\HDe^\ell(X)\ :=\ \bigoplus_{1\leq j \leq [\ell/2]}
H^{\ell-2j}_{\DR}(X) \en\bplus\en\Om_X^\ell / \sd \Om_X^{\ell-1}
$$
for the even or odd-degree part of the usual Deligne cohomology of $X$
for the integer $\ell+1$ shifted down in degree by one. The Deligne
cohomology is denoted ${{}^{\mathcal D}\tilde{\mathrm H}}(X,
\bZ(n+1))$ in ~\cite[\S 3.6]{L}.
\end{example}

\begin{example}\label{e:factor} Let $p: X\to Y$ be a principal 
  $G$-bundle on an affine variety $Y$, the latter being viewed as a
  $G$-variety with the trivial $G$-action. Then, analogously to the
  case of usual equivariant cohomology, there is a canonical
  isomorphism
  \[\HDe^\hdot_{\!\g}(X)\ \cong\ \HDe^\hdot(Y).\]

  To prove this, one observes first that a $G$-bundle on an affine
  variety admits an algebraic connection.  Using a connection, one
  shows that the pull-back morphism $p^*:\ \Om_Y \to \Om_X$ induces an
  isomorphism $\bk[X]\o_{\bk[Y]}\Om_Y \iso H(\Om_X[\mfg],\ i_\g)$.
  Taking $G$-invariants on each side and using that $G$ is connected
  and reductive, we deduce an isomorphism
  $\dis\Om_Y=(\bk[X]\o_{\bk[Y]}\Om_Y)^\g \iso H((\Om_X[\mfg])^\g,\
  i_\g)$.  The required isomorphism $\HDe^\hdot_{\!\g}(X)\ \cong\
  \HDe^\hdot(Y)$ then follows from the spectral sequence of a double
  complex, by taking homology of the differential $i_\g$ first.
\end{example}

\subsection{The representation functor} Associated with an algebra $A$
and $n\geq1$, there is an affine scheme $\Rep_n A$ that parametrizes
$n$-dimensional representations of $A$. Its closed points are algebra
homomorphisms $\rho:\ A\to \Mat_n(\bk),\ a\mto\rho(a)$.  The scheme
$\Rep_n A$ comes equipped with a natural action of $\GL_n$, the
general linear group, by base change transformations.  We write
$\gl_n={\mathrm{Lie}}\GL_n$ for the corresponding Lie algebra.

Below, we will use simplified notation: $\Rep_n =\Rep_n A,\
\DR_t=\DR_t A$, etc.  The connection between $\DR_t$ and the
$\GL_n$-equivariant de Rham complex of $\Rep_n$ is through the
evaluation map, which is a `tautological' homomorphism of algebras,
\[
\ev: A \to (\bk[\Rep_n] \otimes \Mat_n(\bk))^{\gl_n}, \quad a \mapsto
(\rho \mapsto \rho(a)).
\]

The above can be defined scheme-theoretically by extending this
functorially to representations with coefficients in arbitrary
$\bk$-algebras; see, e.g., \cite[\S 12]{Glect}.

The evaluation map extends uniquely to a homomorphism of dg algebras,
\[
\ev: \Om A \to (\Om_{\Rep_n} \otimes \Mat_n(\bk))^{\gl_n}, \quad
\ev(\sd\!a) = \sd \ev(a).
\]
Finally, to extend this to $\Om_t A$, let $\ev(t): \gl_n \to
\Mat_n(\bk)$ be the identity map, viewed as a linear function on
$\gl_n$ valued in $\Mat_n(\bk)$, i.e., as an element of the algebra
$\bk[\gl_n] \otimes \Mat_n(\bk)$. Put together, we obtain a dg algebra
map $\dis\ \ev: \Om_t A \to (\Om_{\Rep_n}[\gl_n] \otimes
\Mat_n(\bk))^{\gl_n}$.  Further, taking a trace gives a chain of maps
\[
\xymatrix{ \Om_t A\ \ar[r]^<>(0.5){\ev}& \ (\Om_{\Rep_n}[\gl_n]
  \otimes \Mat_n(\bk))^{\gl_n}\ \ar[rr]^<>(0.5){\Id\o\text{trace}}&& \
  (\Om_{\Rep_n}[\gl_n] \o\bk)^{\gl_n}\ =\
  (\Om_{\Rep_n}[\gl_n])^{\gl_n}.  }
\]
The resulting composite map descends to a linear map
\[\tr \ccirc \ev:\
\DR_t \too (\Om_{\Rep_n}[\gl_n])^{\gl_n}.\]

Then, it is not difficult to verify

\begin{theorem}[\cite{GScyc}, Theorem 6.2.5] \label{t:rep} For all $n
  \geq 1$, the map $\tr \ccirc \ev$ induces a map of bicomplexes
$$
\tr \ccirc \ev:\ (\DR_t,\ \sd, \si_t) \too
\bigl((\Om_{\Rep_n}[\gl_n])^{\gl_n},\ \sd_{\Rep_n}, i_{\gl_n}\bigr).
$$
\end{theorem}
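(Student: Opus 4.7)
The plan is to verify in turn the three assertions implicit in the statement: (i) the composite $\tr\circ\ev$ is well defined on the commutator quotient $\DR_t$ and lands in $\GL_n$-invariants; (ii) it intertwines $\sd$ with $\sd_{\Rep_n}$; and (iii) it intertwines $\si_t$ with $i_{\gl_n}$.

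For (i), recall that $\ev:\Om_t A\to \Om_{\Rep_n}[\gl_n]\o \Mat_n(\bk)$ is an algebra homomorphism, and $\tr$ kills (super)commutators of matrix-valued forms. Hence $\tr\circ\ev$ vanishes on $[\Om_t A,\Om_t A]$ and descends to $\DR_t$. $\GL_n$-invariance of the image follows from conjugation-invariance of the trace, together with the obvious equivariance of $\ev$ for the natural $\GL_n$-actions on the source and target. Assertion (ii) is immediate from the fact that $\ev$ is a dg-algebra map (so $\ev(\sd f)=\sd\ev(f)$) and that $\sd_{\Rep_n}$ applied entrywise to a matrix-valued form commutes with~$\tr$.

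The main content is (iii). The key observation is that both $\ev\circ\bi_t$ and $i_{\gl_n}\circ\ev$ are $\ev$-derivations of de Rham degree $-1$ from $\Om_t A$ to $\Om_{\Rep_n}[\gl_n]\o\Mat_n(\bk)$, meaning each satisfies
\[
D(fg)\ =\ D(f)\cdot\ev(g)\ +\ (-1)^{|f|}\,\ev(f)\cdot D(g).
\]
Indeed, $\bi_t$ is a super-derivation of $\Om_t A$ (by definition) and $i_{\gl_n}$, extended entrywise to matrix-valued forms, is a super-derivation of $\Om_{\Rep_n}[\gl_n]\o\Mat_n(\bk)$; composition with the algebra map $\ev$ then yields the above Leibniz identity. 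Consequently, to prove $\ev\circ\bi_t=i_{\gl_n}\circ\ev$ it suffices to verify the equality on the generating set $\{t\}\cup A\cup \sd\! A$ of $\Om_t A$. On $t$ and on $a\in A$ both sides vanish: $\bi_t$ kills $t$ and $A$, while $i_{\gl_n}$ kills $\bk[\gl_n]$ and $\Om^0_{\Rep_n}$. On $\sd\!a$ we compute
\[
\ev(\bi_t(\sd\!a))\ =\ \ev([a,t])\ =\ [\ev(a),\ev(t)],
\]
whereas, evaluated at $x\in\gl_n$,
\[
i_{\gl_n}(\ev(\sd\!a))(x)\ =\ i_{\gl_n}(\sd\ev(a))(x)\ =\ \act(x)\ev(a).
\]
Matching the two reduces to the identity $\act(x)\ev(a)=[\ev(a),x]$, which is a direct computation from the $\GL_n$-action on $\Rep_n$ by base change. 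Composing with $\tr$ and restricting to $\GL_n$-invariants then yields the desired intertwining of $\si_t$ with $i_{\gl_n}$, since $\tr\circ\ev$ already descends to $\DR_t$ by~(i).

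The main obstacle I anticipate is purely bookkeeping: checking that the entrywise extension of $i_{\gl_n}$ to matrix-valued forms genuinely satisfies the super-derivation rule with the conventions used, and that the sign of $\act(x)\ev(a)$ agrees with that of $[\ev(a),\ev(t)]$ under the chosen convention $(g\cd\rho)(a)=g\rho(a)g^{-1}$. These are routine but must be done carefully once to close the argument; no conceptual difficulty arises beyond verifying that $\ev$ is in fact a morphism of the natural bigraded super-algebras with super-derivations $(\bi_t,i_{\gl_n})$.
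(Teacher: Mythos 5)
Your proposal is correct, and it follows exactly the route the paper has in mind: the theorem is cited in the paper to \cite{GScyc} with the remark ``it is not difficult to verify,'' and no proof is reproduced here. The verification you give is the natural (and intended) one.

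Two small remarks. First, for part (iii) the reduction to generators is the right argument, but you should be slightly more explicit about why equality of $\ev\circ\bi_t$ and $i_{\gl_n}\circ\ev$ can be checked on the chosen generating set: since both are $\ev$-derivations (super-derivations along the algebra map $\ev$), their difference vanishes on products whenever it vanishes on factors, and $\Om_t A$ is generated as a $\bk$-algebra by $A\cup\sd\!A\cup\{t\}$. That is what you implicitly use; it is worth stating. Second, the sign issue you flag at the end is real and is the only place the convention matters: with the paper's convention $\bi_t(\sd\!a)=[a,t]=-\ad(t)(a)$ and the base-change action $(g\cdot\rho)(a)=g\rho(a)g^{-1}$, the infinitesimal action gives $\act(x)\ev(a)=[\ev(a),x]$ precisely when $\act(x)$ is defined via $\frac{d}{ds}\big|_{s=0}\,\exp(-sx)\cdot(-)$; with the opposite sign convention a compensating minus sign appears in the definition of $i_\g$. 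Once this is pinned down, the identity $\act(x)\ev(a)=[\ev(a),\ev(t)](x)$ holds on the nose and the proof closes. Everything else — descent through $[\Om_t,\Om_t]$ via $\tr$, equivariance of $\ev$, the dg-property for $\sd$ — is correctly handled.
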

\subsection{From cyclic to equivariant Deligne cohomology}
Let us define reduced versions of the previous equivariant complexes
and cohomology groups of $X$ by replacing $\Om_X$ by $\oOm_X = \Om_X /
\bk$ (where $\bk \subseteq \Om_X$ are the scalars in degree zero), and
leaving everything else the same.  We will put lines over everything
to denote the reduced versions.

By Theorem \ref{t:rep} and construction, we deduce that the map $\tr
\ccirc \ev$ induces, for all $n \geq 1$, a canonical morphism of
$\bk[\gl_n]^{\gl_n}[u]$-modules:
\begin{equation}\label{hmap}
  \oEHC(A)=H(\oDR_t \otimes R,\ \si_t-u\sd)\ 
  \too\ \oHDe_{\!\gl_n}(\Rep_n)=H\bigl((\oOm_{\Rep_n}[\gl_n])^{\gl_n} 
  \otimes R,\
  i_{\gl_n}-u \sd_{\Rep_n}\bigr).
\end{equation}

The main result of this section is an analogue of 
the above for the usual cyclic homology:

\begin{theorem}\label{Psi} \vi For every $n \geq 1$,
one has 
  canonical maps given by the
 composition
 \[
 \Psi_\hdot^m:\ \xymatrix{ \oHC_\idot(A) \
   \ar[rr]^<>(0.5){\pr_{\hdot-m}\ccirc \EE}&& \ \oEHC_\idot(A,m)\
   \ar[rr]^<>(0.5){\tr \ccirc \ev}&& \ \oHDe^\hdot_{\!\gl_n}(\Rep_n,
   m),}\quad m\in \Z.
\]

\vii For $m=0$ and every $\ell\geq0$, the composite map
\[\Psi'_\ell:\
\xymatrix{\ker(\si: \oDR^\ell\!/\sd\oDR^{\ell-1}\!\! \to
  \oOm^{\ell-1}\!/\sd\oOm^{\ell-2})\
  \ar@{=}[rr]^<>(0.5){\eqref{hcdef}} &&\ \oHC_\ell(A)\
  \ar[r]^<>(0.5){\Psi_{\ell}^0} & \oHDe^\hdot_{\!\gl_n}(\Rep_n, 0)}
\]
is given by the explicit formula 
\begin{equation}\label{psi'}
  \Psi'_\ell( f)=\mbox{$\frac{1}{\ell!}$}\sum_{0\leq j\leq [\ell/2]}\ 
  u^{-j}\cdot\tr \ccirc
  \ev\ccirc(\sdi\si_t)^j( f).
\end{equation}
\end{theorem}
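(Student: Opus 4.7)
The plan for part (i) is to verify well-definedness of each of the three factors constituting $\Psi_\ell^m$, which is essentially formal. By Theorem \ref{t:Tthm}, $\EE$ intertwines $(\msb, \msB)$ with $(\si_t, \sd)$ and therefore extends $R$-linearly to a chain map $(\oOm \otimes R,\, \msb - u\msB) \to (\ohDRt \otimes R,\, \si_t - u\sd)$. Since both $\si_t$ and $u\sd$ preserve the internal grading, the projection $\pr_{\ell-m}$ is a chain map. A degree count shows that for a cycle of total degree $\ell$ in $\oOm \otimes R$, a term $\oOm^{\ell-2r}u^{-r}$ maps under $\EE$ into $\ohDRt^{\ell-2r}u^{-r}$, and its $t$-homogeneous components of degree $q$ contribute to internal degree $\ell + q - r$; the projection to internal degree $\ell - m$ thus selects $q = r - m$, which requires $m \leq r \leq \ell/2$, giving only finitely many contributions. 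Hence $\pr_{\ell-m} \circ \EE$ lands in the uncompleted $(\oDR_t \otimes R)_{\ell-m}$. The chain map $\tr \circ \ev$ of Theorem \ref{t:rep}, extended $R$-linearly, completes the composition.

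For part (ii), the approach is to combine Proposition \ref{p:ext-cyc}(ii) with the explicit inverse of $\pi_-$ from Proposition \ref{inverse}. First, Proposition \ref{p:ext-cyc}(ii) identifies the composition $\pi_- \circ (\pr_\ell \circ \EE)$ with the composition of Lemma \ref{cq} followed by $\EE^0$. Applied to a harmonic representative $f \in P\oOm^\ell$ of a class in $\oHC_\ell(A)$, the explicit formula \eqref{e:Tdefn} shows that the constant-in-$t$ part of $\EE(f)$ is $\frac{1}{\ell!} f\cc$ (only the term with all $k_i = 0$ contributes); hence the composition sends $f$ to $\frac{1}{\ell!} f\cc$ modulo $\sd\oDR_t$. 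Since isomorphism \eqref{hcdef} identifies the class of $f$ in $\oHC_\ell(A)$ with $g := f\cc \in \ker(\si)$, we conclude that, for $g \in \ker(\si)$, the preimage of $g$ under \eqref{hcdef} maps via $\pi_- \circ (\pr_\ell \circ \EE)$ to $\frac{1}{\ell!} g$.

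Proposition \ref{inverse}, applied in weight $m = 0$, now provides the lift of $\frac{1}{\ell!} g$ back to $\oEHC_\ell(A, 0)$, namely
\[
\mbox{$\frac{1}{\ell!}$}(\Id - u^{-1}\sdi\si_t)^{-1}(g)\ =\ \mbox{$\frac{1}{\ell!}$} \sum_{0 \leq j \leq [\ell/2]}\ u^{-j}(\sdi \si_t)^j(g),
\]
the sum being finite because $\sdi\si_t$ lowers de Rham degree by $2$. Applying $\tr \circ \ev$ term by term then produces formula \eqref{psi'}; independence of the result from the choice of $\sdi$ is inherited from Proposition \ref{inverse}.

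The main obstacle—more a bookkeeping challenge than a conceptual one—is reconciling the two identifications of $\oHC_\ell(A)$: the one via \eqref{hcdef} sends a harmonic representative $f$ to $f\cc$, while the one via Lemma \ref{cq} sends it to $f \bmod \msB\oOm$. These differ by the scalar $\frac{1}{\ell!}$ after applying $\EE^0$, which is precisely the normalization appearing in \eqref{psi'}. Once this factor is accounted for, the remainder of the argument reduces to direct appeals to Proposition \ref{inverse} and the functoriality of $\tr \circ \ev$.
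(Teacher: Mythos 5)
Your proof is correct and follows essentially the same route as the paper's. Part (i) is a more explicit version of what the paper says in one sentence (that it follows from \eqref{hmap} by separating homogeneous components and invoking \eqref{qq}); your degree count confirming that the projection to internal degree $\ell-m$ lands in the uncompleted $(\oDR_t\otimes R)_{\ell-m}$ makes the well-definedness of $\pr_{\hdot-m}\ccirc\EE$ explicit, but it is the same mechanism. For part (ii), you use exactly the paper's three ingredients: the diagram \eqref{pidiag} of Proposition \ref{p:ext-cyc}(ii), the observation that the constant-in-$t$ term of $\EE(f)$ for $f\in\oOm^\ell$ is $\frac{1}{\ell!}f\cc$, and the formula $(\Id-u^{-1}\sdi\si_t)^{-1}$ from Proposition \ref{inverse} inverting $\pi_-$. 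Your appeal to harmonic representatives in step (ii) is harmless but not actually needed: the $t$-degree-zero computation of $\EE$ and the compatibility \eqref{pidiag} hold directly, and this is how the paper handles it via the chain of maps \eqref{chain2}.
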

\begin{remark}
  \textsf{(1)}\en In the above formula, $\tr \ccirc
  \ev\ccirc(\sdi\si_t)^j(f)\in \oOm^{\ell-2j}_{\Rep_n}\o
  \bk^j[\gl_n]$. Thus, the sum in the RHS of \eqref{psi'} {\em a
    priori} lives in
$$\left({\bplus_{1\leq j \leq\lfloor \ell/2 \rfloor}}\
  u^{-j}\cdot\oOm^{\ell-2j}_{\Rep_n}\o\bk^j[\gl_n]\en\bplus\en
  \oOm_{\Rep_n}^\ell/\sd \oOm_{\Rep_n}^{\ell-1}\right)^{\gl_n}.
$$
One checks further that this sum is, in fact, annihilated by the
differential $i_{\gl_n}-u\sd_{\Rep_n}$, and hence gives a well defined
class in $\oHDe^\hdot_{\!\gl_n}(\Rep_n, 0)$.

\textsf{(2)}\en Note that formula \eqref{psi'}, unlike the definition
of the map $\Psi^m_\idot$ in the general case of an arbitrary $m$,
does not involve the Tsygan map $\EE$.
\end{remark}

\begin{proof}[Proof of Theorem \ref{Psi}] Part (i) follows from
  \eqref{hmap}, by separating individual homogeneous components and
  using isomorphism \eqref{qq}.  To prove (ii), we use diagram
  \eqref{pidiag} and Proposition \ref{inverse}.  In more detail, we
  consider the sequence of maps
\begin{gather}
  \xymatrix{\ker(\si: \oDR^\ell\!/\sd\! \oDR^{\ell-1}\to
    \oOm^{\ell-1}\!/\sd\!\oOm^{\ell-2})
    \ar@{=}[r]^<>(0.5){\eqref{hcdef}}& \ H^\ell(\oOm\o R,\
    \msb-u\msB)\ \ar@{=}[r]^<>(0.5){\eqref{cqmap}}& \
    H^\ell(\oOm/\msB\oOm,\msb)}\label{chain2}
  \\
  \qquad\hskip20mm\xymatrix{ \ar[r]^<>(0.5){\EE^0}_<>(0.5){\cong}& \
    H(\oDR_t/\sd\oDR_t, \si_t)^{\ell,0} \ar[r]^<>(0.5){\eta}&\
    H^\ell\bigl((\oDR_t,\ \si_t-u\sd)_{\ell}\bigr)\ \ar[r]^<>(0.5){\tr
      \ccirc \ev}& \ \oHDe^\ell_{\!\gl_n}(\Rep_n, 0),}\nonumber
\end{gather}
where the map $\eta$
is given by the formula $\eta=(\Id-u\inv\sdi\si_t)\inv=
\sum_j u^{-j}\cdot (\sdi\si_t)^j$.

Observe first that, for $f\in\oOm^\ell$, the component of $\EE(f)$ of
degree zero in $t$ equals $\frac{1}{\ell!}\cdot f\cc$ (one can see
this either directly from formula \eqref{e:Tdefn} or from Lemma
\ref{n!}).  Therefore, the restriction of the map $\EE^0$ to
$H^\ell(\oOm/\msB\oOm,\msb)$ is essentially the map
$f\mto\frac{1}{\ell!}\cdot f\cc$.  It follows that the composition of
the first four isomorphisms in \eqref{chain2} is the isomorphism from
\eqref{orem} times $\frac{1}{\ell!}$.  We deduce that the composite of
all the maps in \eqref{chain2} equals the map given by formula
\eqref{psi'}.

On the other hand, the map $\eta$ provides, thanks to Proposition
\ref{inverse}, an inverse to the isomorphism $\oEHC_\ell(A,0) \iso
H(\oDR_t/\sd\oDR_t, \si_t)^{\ell,0}$, in \eqref{qq}.  Therefore, from
the commutativity of diagram \eqref{pidiag} we deduce that the
composite of the second, third, and fourth maps in \eqref{chain2} is
equal to the map $\dis \pr_\ell\ccirc\EE:\ H^\ell(\oOm\o R,\
\msb-u\msB)\to \oEHC_\ell(A,0)$. We conclude that the composite of the
last four maps in \eqref{chain2} equals the map $\Psi^0_\ell$,
completing the proof.
\end{proof}

\vskip3pt One can also restrict the map \eqref{hmap} to the Karoubi-de
Rham cohomology $\oHD(A) \subseteq \oHC(A)$ and $\oEHD(A) \subseteq
\oEHC(A)$. This will land in $\oHDe_{\cl,\gl_n}(\Rep_n(A))$, the
cohomology of the complex
$$
\oCC_{\cl,\gl_n}(\Rep_n) := \bigl(\oOm^{\gl_n}_{\cl, \Rep_n}\en \bplus\en
(\oOm_{\Rep_n} 
\otimes \bk[\gl_n]_+)^{\gl_n}\o R,\;\ \si_{\gl_n}-u\sd_{\Rep_n}\bigr),
$$
where $\oOm^{\gl_n}_{\cl,{\Rep_n}}$ denotes the subspace of closed
forms and $\bk[\gl_n]_+\sset\bk[\gl_n]$ denotes the augmentation
ideal.  Let $\oCC_{\cl,\gl_n}^\wedge({\Rep_n})$ be the corresponding
$\bk[\gl_n]_+$-adic completion.
\begin{corollary}
  By restriction, we obtain maps
\[
\oEHD(A) \to H(\oCC_{\cl,\gl_n}({\Rep_n})),
\quad\text{\emph{and}}\quad \oHD(A) \to
H(\oCC_{\cl,\gl_n}^\wedge({\Rep_n})).\eqno\Box
\]
\end{corollary}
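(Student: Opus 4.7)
The plan is to view the corollary as a direct restriction of the maps constructed in \eqref{hmap} and Theorem \ref{Psi}: it suffices to verify that, when the source class lies in $\oEHD(A) \subseteq \oEHC(A)$ (resp.\ $\oHD(A) \subseteq \oHC(A)$), one can choose cycle representatives whose images under $\tr \ccirc \ev$ lie in the subcomplex $\oCC_{\cl,\gl_n}(\Rep_n)$ (resp.\ its $t$-adic completion $\oCC_{\cl,\gl_n}^\wedge(\Rep_n)$). Both halves of the corollary reduce to the same two ingredients: choosing a good representative, then transporting it by $\tr \ccirc \ev$.

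For good representatives, invoke Proposition \ref{inverse}: every class in the weight-$m$ summand of $\oEHD(A)$ (for $m \geq 0$) admits a cycle representative of the form
\[
u^{-m}(\Id - u^{-1}\sdi\si_t)^{-1}(f) \ =\ u^{-m}f\ +\ u^{-m-1}\sdi\si_t(f)\ +\ u^{-m-2}(\sdi\si_t)^2(f)\ +\ \cdots\ \in\ \oDR_t \otimes R,
\]
where $f \in \oDR$ is a closed form of appropriate degree, sitting in $t$-degree zero. Since $\si_t$ strictly raises $t$-degree while $\sdi$ preserves it, the operator $(\sdi\si_t)^k$ lands in $t$-degree $k$; hence the entire $t$-degree zero contribution is the single term $u^{-m}f$, while all remaining terms have strictly positive $t$-degree. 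For the second corollary map, the explicit formula \eqref{psi'} of Theorem \ref{Psi}(ii) gives $\Psi'_\ell(f) = \frac{1}{\ell!}\sum_{0 \leq j \leq [\ell/2]} u^{-j}\,\tr \ccirc \ev \ccirc (\sdi\si_t)^j(f)$ for closed $f \in \oDR^\ell$, exhibiting the same structure: the $t^0$ contribution comes only from $j = 0$, and subsequent summands live in strictly positive $t$-degree.

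Now apply $\tr \ccirc \ev$. By Theorem \ref{t:rep}, this is a morphism of bicomplexes intertwining $(\sd,\si_t)$ with $(\sd_{\Rep_n}, i_{\gl_n})$: it commutes with the Karoubi-de Rham differentials (so closed forms on $A$ map to closed forms on $\Rep_n$), and the tautological evaluation sends $t$ to a linear function $\gl_n \to \Mat_n$ whose trace lies in the augmentation ideal $\bk[\gl_n]_+$. Combining these two facts, $\tr\ccirc\ev(u^{-m}f)$ is a power of $u$ times a closed form on $\Rep_n$ with no $\gl_n$-factor, while each term of strictly positive $t$-degree is sent into $(\oOm_{\Rep_n} \otimes \bk[\gl_n]_+)^{\gl_n} \otimes R$. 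This is precisely the decomposition of $\oCC_{\cl,\gl_n}(\Rep_n)$; for the Tsygan-map version the sum over $t$-degrees is infinite, so the image must be interpreted in the $t$-adic completion. The main obstacle I anticipate is bookkeeping: one must carefully track representatives through the identifications of Propositions \ref{p:ext-cyc} and \ref{inverse} (together with Theorem \ref{e:ehc-cyc2}), verify that $\oCC_{\cl,\gl_n}(\Rep_n)$ really is a subcomplex of the full equivariant Cartan model (immediate from the facts that $i_{\gl_n}$ strictly raises $\gl_n$-degree and $u\sd_{\Rep_n}$ preserves it and kills closed forms), and confirm independence from the choice of representative by a standard cohomological argument.
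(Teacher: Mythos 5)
Your proposal is correct and it is essentially the paper's own (implicit, one-line) argument spelled out: the paper simply asserts that restricting \eqref{hmap} to $\oEHD(A)$ (resp.\ composing with the Tsygan map on $\oHD(A)\subseteq\oHC(A)$) lands in the cohomology of $\oCC_{\cl,\gl_n}(\Rep_n)$ (resp.\ its completion), and your two ingredients — picking cycle representatives whose $t^0$-part is a closed form via Proposition~\ref{inverse}/formula~\eqref{psi'}, and observing that $\tr\ccirc\ev$ sends closed forms to closed forms and positive $t$-degree to the augmentation ideal $\bk[\gl_n]_+$ (Theorem~\ref{t:rep}) — are exactly what makes this assertion true.

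One small technicality worth noting: for closed $f\in\oDR^\ell$ the sum $\sum_{j\geq 0}u^{-j}(\sdi\si_t)^j(f)$ truncates after $j=[\ell/2]$ because $\sdi\si_t$ lowers de Rham degree by two, so the map $\oHD(A)\to\oHDe^\hdot_{\gl_n}(\Rep_n,0)$ is already polynomial in $\bk[\gl_n]$; the $\bk[\gl_n]_+$-adic completion in the second half of the corollary is forced not by this single-weight formula but by the fact that the Tsygan map $\EE$ itself is a $t$-adic power series, so the full map \eqref{hmap} (restricted through $\oHD(A)\to\oHC(A)\xrightarrow{\EE}\oEHC^\wedge(A)$) contributes to all $\gl_n$-degrees simultaneously. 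Your framing ("for the Tsygan-map version the sum over $t$-degrees is infinite") captures this correctly.

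(Also note: the paper's Proposition~\ref{inverse} writes $u^m(\Id-u^{-1}\sdi\si_t)^{-1}(f)$ where your $u^{-m}$ appears; comparing with the definition of $\pi_m$, which extracts the coefficient of $u^{-m}$, your exponent $u^{-m}$ is the correct one, so this is a typo in the paper rather than an error in your account.)
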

\begin{remark}
  There are also `periodic' and `negative' versions of the above
  constructions, which produce maps
\[
\oEHC^{\per,\wedge}(A) \to H(\oCC^{\per,\wedge}_{\gl_n}(\Rep_n A))
\cong \overline{H}_{\!\gl_n}(\Rep_n A)\dpu, \quad \oEHC^{-,\wedge}(A)
\to H(\oCC^{-,\wedge}_{\gl_n}(\Rep_n A)).
\]

Composing with $\EE$, we obtain maps
\[
\oHC^\per(A) \to H(\oCC_{\gl_n}^{\per,\wedge}(\Rep_n A))\dpu, \qquad
\oHC^-(A) \to H(\oCC^{-,\wedge}_{\gl_n}(\Rep_n A)).
\]

Moreover, we obtain commutative diagrams of commutative squares with
exact rows, from \eqref{e:bigdiag} to the version with $\EE$ in front
of all groups (and with $t$-adic completions), to the commutative
square connecting the $\bk[\gl_n]_+$-adic completed versions of the
above equivariant cohomology groups (and versions one can similarly
define analogous to $\oHH', \oHH$, and $\oHD'$; note that the
equivariant analogue of $\oHD$, $\oHDe_{\cl,\mfg}({\Rep_n})$, was
defined above).
\end{remark}

\subsection{Special cases}
\underline{{\em The case of $n=1$}}:\quad If $n=1$, then $\GL_1$ is a
1-dimensional torus, and $\bk[\gl_1] \cong \bk[t]$. Furthermore, since
the algebra $\Mat_1(\bk)$ is commutative, the evaluation map for
$\oOm_tA$ may be factored as a composition
\[
\xymatrix{ \oOm_tA\ \ar@{->>}[rr]^<>(0.5){f\mto f\cc}&&\ \oDR_t\
  \ar[r]^<>(0.5){\ab_t}&\ \oDR[t]\ \ar[rr]^<>(0.5){\ev\o\Id_{\bk[t]}}
  &&\ \oOm^\hdot_{\Rep_1A}[t]\o\Mat_1(\bk),}
\]
where $\ab_t$ is the map that makes $t$ a central variable, cf.~Lemma
\ref{n!}. Given $f\in \oOm A$, we will use simplified notation $\bar
f:=\tr\ccirc\ev(f\cc)\in \oOm_{\Rep_1A}$.  Thus, using the formula of
Lemma \ref{n!} and the fact that the trace map yields an {\em algebra}
isomorphism $\tr: \Mat_1(\bk)\iso\bk$, we deduce
\begin{equation}\label{bbb}\tr\ccirc\ev\ccirc\EE(f)=
  \mbox{$\frac{1}{k!}$}\cdot
  \bar f\,\exp(t),\qquad \forall\ f\in\oOm^kA.
\end{equation}

One can use the above formula to obtain an explicit description of the
map of Theorem \ref{Psi} in the special case that $n=1$ and $m=0$.  To
this end, observe first that the group $\GL_1$ acts trivially on the
scheme $\Rep_1 A$.  Therefore, we are in the setting of Example
\ref{e:triv}.  According to the isomorphism given there, in the
special case where $\g=\gl_1$ and $m=0$, Theorem \ref{Psi} yields a
map
\begin{equation}\label{ququ}
  \Psi_\ell^0:\ \oHC_\ell(A)\ \to\
  \oHDe^\ell(\Rep_1,0)=\oOm_{\Rep_1 A}^\ell / \sd \oOm_{\Rep_1 A}^{\ell-1}
  \en
  \bplus\en
  \left(\oplus_{j \geq 1}\ \overline{\!H\!}\,^{\ell-2j}
    (\Rep_1 A)\,t^j\,u^{-j}\right)
\end{equation}

To compute this map explicitly, we plug formula \eqref{bbb} for the
map $\EE$ into \eqref{hmap} and separate the relevant homogeneous
components. Then, one finds that the map \eqref{ququ} is given, for
every $\sum_{0\leq j\leq [\ell/2]}\ f_{\ell-2j}\, u^{-j}$ representing
a class in $\oHC_\ell(A)=H^\ell(\oOm A\o R,\ \msb-u\msB)$, by the
formula
\begin{equation}\label{ddd}
\sum_{0\leq j\leq [\ell/2]}\ f_{\ell-2j}\, u^{-j}\ \mto\
\sum_{0\leq j\leq [\ell/2]}\ \frac{1}{(\ell-2j)!(\ell-2j)!}\cdot
\bar f_{\ell-2j}\, t^j\, u^{-j},\qquad 
f_{\ell-2j}\in \oOm^{\ell-2j}A.
\end{equation}

Next, let $A_{{\rm{ab}}}:=A/([A,A])$ be the {\em abelianization} of
$A$, a quotient of the algebra $A$ by the two-sided ideal generated by
the set $[A,A]$.  A well known construction in the theory of cyclic
homology of {\em commutative} algebras produces a map
$$
\oHC_\ell(A_\abb)=H^\ell(\oOm A_\abb\o R,\ \msb-u\msB)\ \to\ 
\bplus_{j \geq 1}\ H^{\ell-2j}(\Spec A_\abb) \en
 \bplus\en
\oOm_{\Spec A_\abb}^\ell / \sd \oOm_{\Spec A_\abb}^{\ell-1}.
$$
Specifically, according to \cite[Proposition ~2.3.7]{L}, this map is
defined by the assignment
\begin{equation}\label{loday}
  \Psi^{\text{\rm classical}}_\ell:\
  \sum_{0\leq j\leq [\ell/2]}\ f_{\ell-2j}\, u^{-j}\ \mto\
  \bigoplus_{0\leq j\leq [\ell/2]}\ \frac{1}{(\ell-2j)!}\cdot
  \bar f_{\ell-2j}.
\end{equation}

Let $\abb:\ A\to A_\abb$ be the abelianization homomorphism and write
$\HC(\abb): \oHC(A) \to \oHC(A_\abb)$ for the induced map on cyclic
homology.  The abelianization map $\HC(\abb)$ clearly intertwines the
Tsygan maps for the algebras $A$ and $A_\abb$, respectively.  Note
further that Hilbert's Nullstellensatz yields canonical isomorphisms
$\Rep_1 A =\Rep_1 A_\abb$ $= \Spec A_{{\rm{ab}}}$. With these
observations in mind, comparing formulas \eqref{ddd} and \eqref{loday}
yields the comparison result

\begin{corollary}\label{psipsi} One has
  $\ \sN!\ccirc \Psi_\idot^0=\Psi^{\text{\rm classical}}_\idot\ccirc
  \HC(\abb)$, where $\sN!$ is the map that acts by scalar
  multiplication by $k!$ in de Rham degree $k$.
\end{corollary}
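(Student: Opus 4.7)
The plan is to establish the identity by a direct comparison of the two explicit formulas \eqref{ddd} and \eqref{loday}, once the two targets have been suitably identified. Since $n=1$, the group $\GL_1$ acts trivially on $\Rep_1 A$, so Example \ref{e:triv} applied to $\mfg=\gl_1$ and $m=0$ furnishes a canonical isomorphism
\[
\oHDe^\ell_{\gl_1}(\Rep_1 A,\,0)\ \cong\ \oOm^\ell_{\Rep_1 A}/\sd\oOm^{\ell-1}_{\Rep_1 A}\ \bplus\ \bigoplus_{1\leq j\leq [\ell/2]}\,\overline{\!H\!}\,^{\ell-2j}(\Rep_1 A)\cdot t^j u^{-j}.
\]
Deleting the formal monomials $t^j u^{-j}$ on the right exhibits this group as the target of $\Psi^{\text{\rm classical}}_\ell$, where one uses the Nullstellensatz identification $\Rep_1 A = \Rep_1 A_\abb=\Spec A_\abb$. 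Because matrix traces annihilate commutators, the evaluation map $\tr\ccirc\ev$ factors as $\oOm A\onto \oOm A_\abb \iso \oOm_{\Spec A_\abb}$, so $\bar f_{\ell-2j}=\overline{\abb(f_{\ell-2j})}$ for every $f_{\ell-2j}\in\oOm^{\ell-2j}A$.

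Next I apply both sides of the claimed identity to a cycle $\sum_j f_{\ell-2j}\, u^{-j}$ representing a class in $\oHC_\ell(A)$. By formula \eqref{ddd},
\[
\Psi^0_\ell\Bigl({\textstyle\sum_j}\, f_{\ell-2j}\, u^{-j}\Bigr)\ =\ {\textstyle\sum_j}\ \frac{1}{(\ell-2j)!\,(\ell-2j)!}\cdot\bar f_{\ell-2j}\, t^j u^{-j},
\]
so $\sN!\ccirc \Psi^0_\ell$, which multiplies the $j$-th summand by $(\ell-2j)!$, produces $\sum_j \frac{1}{(\ell-2j)!}\,\bar f_{\ell-2j}\, t^j u^{-j}$. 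On the other hand, $\HC(\abb)$ sends the cycle to $\sum_j \abb(f_{\ell-2j})\, u^{-j}\in \oHC_\ell(A_\abb)$, whereupon \eqref{loday} yields $\bigoplus_j \frac{1}{(\ell-2j)!}\,\overline{\abb(f_{\ell-2j})}$. Under the identification of targets and the equality $\bar f_{\ell-2j}=\overline{\abb(f_{\ell-2j})}$, the two outputs agree term by term, including on the $j=0$ summand, where each reduces to $\frac{1}{\ell!}\bar f_\ell\mmod \sd\oOm^{\ell-1}_{\Rep_1 A}$.

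No genuine obstacle arises: the whole corollary is a bookkeeping check once the explicit formulas are in hand. The substantive inputs are Theorem \ref{Psi}(ii), whose formula \eqref{ddd} rests on Lemma \ref{n!} (the Tsygan map behaves as $\frac{1}{k!}\exp(t)$ after passing through $\ab_t$, which is available in the $n=1$ case because $\Mat_1(\bk)$ is commutative), together with the classical Loday formula \eqref{loday}. The operator $\sN!$ serves only to cancel the extra factor $1/(\ell-2j)!$ produced by the Tsygan normalization in \eqref{ddd}, matching it against the single factor $1/(\ell-2j)!$ in \eqref{loday}.
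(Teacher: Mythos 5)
Your proposal is correct and follows the same route the paper intends: the paper simply says ``comparing formulas \eqref{ddd} and \eqref{loday} yields the comparison result,'' and your argument spells out exactly that comparison, including the two bookkeeping points the paper leaves tacit (the identification of targets via Example \ref{e:triv} together with the Nullstellensatz $\Rep_1 A=\Spec A_\abb$, and the factorization $\tr\ccirc\ev = \overline{(\,\cdot\,)}\ccirc\abb$ coming from traces killing commutators). The term-by-term cancellation of $1/(\ell-2j)!$ against $\sN!$ is exactly the content of the corollary, and you have it right.
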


Restricting the map \eqref{ququ} to $\oHD_\ell(A)$ via the embedding
$S_2$ from the Connes exact sequence, cf.~Theorem \ref{con}, we obtain
a map
\begin{equation}\label{HDmap}\Psi_{\ell}^0\ccirc S_2:\ 
  \oHD_\ell(A)\ \too\  \bplus_{j \geq 0}\ \overline{\!H\!}\,^{\ell-2j}(\Rep_1 A). 
\end{equation}

\begin{remark}
  In the case where $A$ is a {\em smooth} commutative algebra the map
  $\Psi^{\text{classical}}_\ell$ is known to be an isomorphism, see
  \cite[Theorem 3.4.12]{L}. It follows that, in this case, each of the
  maps \eqref{ququ} and \eqref{HDmap} is an isomorphism as well.
\end{remark}

\underline{{\em Restriction of $\gl_n$ to scalars}}:\quad For general
$n \geq 1$, the subalgebra of scalar matrices $\gl_1 \subseteq \gl_n$
still acts trivially on $\Rep_n A$. Thus, restricting the general
construction to $\gl_1$ produces a map

\begin{equation}
\label{e:hc-eq-repn}
\oHC_\ell(A)\ \to\ \oOm_{\Rep_n A}^\ell  / \sd \oOm_{\Rep_n A}^{\ell-1}\en
\bplus\en
\left(\oplus_{j \geq 1}\ \overline{\!H\!}\,^{\ell-2j}(\Rep_n A)\right).
\end{equation}

There is also a comparison result, similar to Corollary \ref{psipsi},
saying that the map \eqref{e:hc-eq-repn} is equal, up to a twist by
the automorphism $\sN!$, to the composition of the evaluation map
\[\oHC_\ell(A)\ \to\  \oHC_\ell(\bk[\Rep_n A] \otimes
  \Mat_n(\bk))\  \cong\  \oHC_\ell(\bk[\Rep_n A])\] with 
isomorphism $\Psi^{\text{\rm classical}}$.

If we don't restrict to scalars, the maps $\oHC(A) \to
\oHDe_{\!\gl_n}(\Rep_n A)$ should capture finer information, having to
do with the equivariant geometry of the representation scheme.

\bibliographystyle{amsalpha}
\def\cprime{$'$}
\providecommand{\bysame}{\leavevmode\hbox to3em{\hrulefill}\thinspace}
\providecommand{\MR}{\relax\ifhmode\unskip\space\fi MR }
\providecommand{\MRhref}[2]{%
  \href{http://www.ams.org/mathscinet-getitem?mr=#1}{#2}
}
\providecommand{\href}[2]{#2}

\end{document}